\numberwithin{equation}{section}
\newtheorem{thm}{Theorem}[section]
\newtheorem{prop}[thm]{Proposition}
\newtheorem{lem}[thm]{Lemma}
\newtheorem{cor}[thm]{Corollary}
\theoremstyle{remark}
\newtheorem{rem}[thm]{Remark}
\theoremstyle{definition}
\newtheorem{defn}[thm]{Definition}
\newcommand*{\rom}[1]{\expandafter\@slowromancap\romannumeral #1@} 
\renewcommand{\phi}{\varphi} 
\newcommand{\diff}{\mbox{d}} 
\newcommand{\R}{\mathbb{R}} 
\newcommand{\nooutput}[1]{}
\begin{document}

\title[Strong Solutions of d-Dimensional fBm driven SDE's with Generalized Drift]{Strong Solutions of SDE's with Generalized Drift and Multidimensional Fractional Brownian Initial Noise}

\date{\today}

\author[D. Ba\~{n}os]{David Ba\~{n}os}
\address[David Ba\~{n}os]{\\
Institute of Mathematics of the University of Barcelona, University of Barcelona \\
\\
Gran Via de les Corts Catalanes, 585\\
08007, Barcelona}
\email[]{davidru@math.uio.no}
\author[S. Ortiz-Latorre]{Salvador Ortiz-Latorre}
\address{S. Ortiz-Latorre: Department of Mathematics, University of Oslo, Moltke Moes vei 35, P.O. Box 1053 Blindern, 0316 Oslo, Norway.}
\email{salvadoo@math.uio.no} 
\author[A. Pilipenko]{Andrey Pilipenko}
\address{A. Pilipenko: Institute of Mathematics, National Academy of Sciences of Ukraine, Tereshchenkivska Str. 3, 01601, Kiev, Ukraine.}
\email{pilipenko.ay@yandex.ua} 
\author[F. Proske]{Frank Proske}
\address{F. Proske: CMA, Department of Mathematics, University of Oslo, Moltke Moes vei 35, P.O. Box 1053 Blindern, 0316 Oslo, Norway.}
\email{proske@math.uio.no}

\keywords{SDEs, Compactness criterion, generalized drift, Malliavin calculus, reflected SDE's.}
\subjclass[2010]{60H10, 49N60}


\begin{abstract}
In this paper we prove the existence of strong solutions to a SDE with a
generalized drift driven by a multidimensional fractional Brownian motion
for small Hurst parameters $H<\frac{1}{2}.$ Here the generalized drift is
given as the local time of the unknown solution process, which can be
considered an extension of the concept of a skew Brownian motion to the case
of fractional Brownian motion. Our approach for the construction of strong
solutions is new and relies on techniques from Malliavin calculus combined
with a "local time variational calculus" argument.  

\end{abstract}

\maketitle

\section{Introduction}
Consider the $d-$dimensional stochastic differential equation (SDE) 
\begin{equation}
X_{t}^{x}=x+\alpha L_{t}(X^{x})\cdot \boldsymbol{1}_{d}+B_{t}^{H},0\leq
t\leq T,x\in \mathbb{R}^{d},  \label{SDEfBm}
\end{equation}%
where the driving noise $B_{\cdot }^{H}$ of the this equation is a $d-$%
dimensional fractional Brownian motion, whose components are given by
one-dimensional independent fractional Brownian motions with a Hurst
parameter $H\in (0,1/2),$ and where $\alpha \in \mathbb{R}$ is a constant
and $\boldsymbol{1}_{d}$ is the vector in $\mathbb{R}^{d}$ with entries
given by $1$. Further, $L_{t}(X^{x})$ is the (existing) local time at zero
of $X_{\cdot }^{x},$ which can be formally written as%
\begin{equation*}
L_{t}(X^{x})=\int_{0}^{t}\delta _{0}(X_{s}^{x})ds,
\end{equation*}%
where $\delta _{0}$ denotes the Dirac delta function in $0$.

We also assume that $B_{\cdot }^{H}$ is defined on a complete probability
space $(\Omega ,\mathfrak{A},P).$

We recall here for $d=1$ and Hurst parameter $H\in (0,1)$ that $%
B_{t}^{H},0\leq t\leq T$ is a centered Gaussian process with covariance
structure $R_{H}(t,s)$ given by%
\begin{equation*}
R_{H}(t,s)=E[B_{t}^{H}B_{s}^{H}]=\frac{1}{2}(s^{2H}+t^{2H}-\left\vert
t-s\right\vert ^{2H}).
\end{equation*}%
For $H=\frac{1}{2}$ the fractional Brownian motion $B_{\cdot }^{H}$
coincides with the Brownian motion. Moreover, $B_{\cdot }^{H}$ has a version
with $(H-\varepsilon) $-H\"{o}lder continuous paths for all $\varepsilon \in
(0,H)$ and is the only stationary Gaussian process having the
self-similarity property, that is%
\begin{equation*}
\{B_{\gamma t}^{H}\}_{t\geq 0}=\{\gamma ^{H}B_{t}^{H}\}_{t\geq 0}
\end{equation*}%
in law for all $\gamma >0$. Finally, we mention that for $H\neq \frac{1}{2}$
the fractional Brownian motion is neither a Markov process nor a (weak)
semimartingale. The latter properties however complicate the study of SDE's
driven by $B_{\cdot }^{H}$ and in fact call for the development of new
construction techniques of solutions of such equations beyond the classical
Markovian framework. For further information about the fractional Brownian
motion, the reader may consult e.g. \cite{Nua10} and the references
therein.

In this paper we want to analyze for small Hurst parameters $H\in (0,1/2)$
strong solutions $X_{\cdot }^{x}$ to the SDE (\ref{SDEfBm}), that is solutions
to (\ref{SDEfBm}), which are adapted to a $P$-augmented filtration $\mathcal{F}%
=\{\mathcal{F}_{t}\}_{0\leq t\leq T}$ generated by $B_{\cdot }^{H}$. Let us
mention here that solutions to (\ref{SDEfBm}) can be considered a
generalization of the concept of a \emph{skew Brownian motion} to the case
of a fractional Brownian motion. The skew Brownian motion, which was first
studied in the 1970ties in \cite{ItoMcKean} and \cite{Walsh} and which has
applications to e.g. astrophysics, geophysics or more recently to the
simulation of diffusion processes with discontinuous coefficients (see e.g. \cite{Zhang}, \cite{Lejay03}, \cite{Etore}) ,
is the a solution to the SDE 
\begin{equation}
X_{t}^{x}=x+(2p-1)L_{t}(X^{x})+B_{t},0\leq t\leq T,x\in \mathbb{R},
\label{SkewBm}
\end{equation}%
where $B_{\cdot }$ is a one-dimensional Brownian motion, $L_{t}(X^{x})$ the
local time at zero of $X_{\cdot }^{x}$ and $p$ a parameter, which stands for
the probability of positive excursions of $X_{\cdot }^{x}$.

\bigskip It was shown in \cite{HarrisonShepp} that the SDE (\ref{SkewBm}) has a
unique strong solution if and only if $p\in \lbrack 0,1]$. The approach used
by the latter authors relies on a one-to-one transformation of (\ref{SkewBm}%
) into a SDE without drift and the symmetric It\^{o}-Tanaka formula. An
extension of the latter result to SDE's of the type%
\begin{equation}
dX_{t}=\sigma (X_{t})dB_{t}+\int_{\mathbb{R}}\nu (dx)dL_{t}^{x}(X)
\label{GenSkew}
\end{equation}%
was given in the work \cite{LeGall} under fairly general conditions on the
coefficient $\sigma $ and the measure $\nu ,$ where the author also proves
that strong solutions to (\ref{GenSkew}) can be obtained through a limit of
sequences of solutions to classical It\^{o}-SDE's by using the comparison
theorem.

We remark here that the \emph{Walsh Brownian motion} \cite{Walsh} also provides a natural
extension of the skew Brownian motion, which is a diffusion process on rays
in $\mathbb{R}^{2}$ originating in zero and which exhibits the behaviour of
a Brownian motion on each of those rays. A further generalization of the
latter process is the \emph{spider martingale}, which has been used in the
literature for the study of Brownian filtrations \cite{Yor}.

Other important generalizations of the skew Brownian motion to the
multidimensional case in connection with weak solutions were studied in \cite{Portenko79} and \cite{BC.03}: Using PDE techniques, Portenko in \cite{Portenko79} gives a construction of a unique solution process
associated with an infinitesimal generator with a singular drift
coefficient, which is concentrated on some smooth hypersurface.

On the other hand Bass and Chen in \cite{BC.03} analyze (unique)
weak solutions of equations of the form%
\begin{equation}
dX_{t}=dA_{t}+dB_{t},  \label{Bass}
\end{equation}%
where $B_{\cdot }$ is a $d-$dimensional Brownian motion and $A_{t}$ a process %
, which is obtained from limits of the form%
\begin{equation*}
\lim_{n\longrightarrow \infty }\int_{0}^{t}b_{n}(X_{s})ds
\end{equation*}%
in the sense of probability uniformly over time $t$ for functions $b_{n}:%
\mathbb{R}^{d}\longrightarrow \mathbb{R}^{d}$. Here the $i$th components of $%
A_{t}$ are bounded variation processes, which correspond to signed measures
in the Kato class $K_{d-1}$. The method of the authors for the construction
of unique weak solutions of such equations is based on the construction of a
certain resolvent family on the space $C_{b}(\mathbb{R}^{d})$ in connection
with the properties of the Kato class $K_{d-1}$.

In this context we also mention the paper \cite{FRW} on SDE's with distributional drift
coefficients. As for a general overview of various construction techniques
with respect to the skew Brownian motion and related processes based e.g. on the
theory of Dirichlet forms or martingale problems, the reader is referred to \cite{Lejay06}. See also the book \cite{Pilipenko}.

\bigskip 

The objective of this paper is the construction of strong solutions to the
multidimensional SDE (\ref{SDEfBm}) with fractional Brownian noise initial data
for small Hurst parameters $H<\frac{1}{2},$ where the generalized drift is
given by the local time of the unknown process. Note that in contrast to \cite{HarrisonShepp} in the case of a skew Brownian motion we obtain in this
article the existence of strong solutions to (\ref{SDEfBm}) for \emph{all} parameters $\alpha
\in \mathbb{R}.$   

Since the fractional Brownian motion is neither a Markov process nor a
semimartingale, if $H\neq \frac{1}{2}$, the methods of the above mentioned
authors cannot be (directly) used for the construction of strong solutions
in our setting. In fact, our construction technique considerably differs
from those in the literature in the Wiener case. More specifically, we
approximate the Dirac delta function in zero by means of functions $\varphi
_{\varepsilon }$ for $\varepsilon \searrow 0$ given by%
\begin{equation*}
\varphi _{\varepsilon }(x)=\varepsilon ^{-\frac{d}{2}}\varphi (\varepsilon
^{-\frac{1}{2}}x),x\in \mathbb{R}^{d}
\end{equation*}%
where $\varphi $ is e.g. the $d-$dimensional standard Gaussian density. Then
we prove that the sequence of strong solutions $X_{t}^{n}$ to the SDE's%
\begin{equation*}
X_{t}^{n}=x+\int_{0}^{t}\alpha\varphi _{1/n}(X_{s}^{n})\cdot \boldsymbol{1}_{d}ds+B_{t}^{H}
\end{equation*}%
converges in $L^{2}(\Omega )$, strongly to a solution to (\ref{SDEfBm}) for $%
n\longrightarrow \infty $. In showing this we employ a compactness criterion
for sets in $L^{2}(\Omega )$ based on\ Malliavin calculus combined with a
"local time variational calculus" argument. See \cite{BNP.16} for the
existence of strong solutions of SDE's driven by $B_{\cdot }^{H},$ $H<\frac{1%
}{2}$, when e.g. the drift coefficients $b$ belong to $L^{1}(\mathbb{R}%
^{d})\cap L^{\infty }(\mathbb{R}^{d})$ or see \cite{MBP10} in the Wiener case. We
also refer to a series of other papers in the Wiener and L\'{e}vy process
case and in the Hilbert space setting based on that approach: \cite{MMNPZ10}, \cite{HaaPros.14}, \cite{MNP14}, \cite{FNP.13}, \cite{BDMP1}, \cite{BDMP2}. 

Here we also want to point out a recent work of Catellier, Gubinelli \cite{CG},
which came to our attention, after having finalized our article. In their
striking paper, which extends the results of Davie \cite{Davie} to the case of a
fractional Brownian noise, the authors study the problem, which fractional
Brownian paths actually regularize solutions to SDE's of the form%
\begin{equation*}
dX_{t}^{x}=b(X_{t}^{x})dt+dB_{t}^{H},X_{0}^{x}=x\in \mathbb{R}^{d}
\end{equation*}
for all $H\in (0,1)$. The (unique) solutions constructed in \cite{CG} are \emph{%
path by path} with respect to time-dependent vector fields $b$ in the Besov-H%
\"{o}lder space $B_{\infty ,\infty }^{\alpha +1},$ $\alpha \in \mathbb{R}$
and in the case of distributional vector fields solutions to the SDE's,
where the drift term is given by a non-linear Young type of integral based
on an averaging operator. In proving existence and uniqueness results the
authors use the Leray-Schauder-Tychonoff fixed point theorem and a
comparison principle in connection with an average translation operator.
Further, Lipschitz-regularity of the flow $(x\longmapsto X_{t}^{x})$ under
certain conditions is shown.

We remark that our techniques are very different from those developed by
Catellier, Gubinelli \cite{CG}, which seem not to work in the case of the SDE (\ref{SDEfBm})
(private communication with one of the authors in \cite{CG}). Further, their methods fail in
the case of SDE's with vector fields $b$, which are merely bounded and
measurable.

\bigskip Finally, we mention that the construction technique in this article
may be also used for showing strong solutions of SDE's with respect to
generalized drifts in the sense of (\ref{Bass}) based on Kato classes. The
existence of strong solutions of such equations in the Wiener case is to the
best of our knowledge still an open problem.\ See the work of Bass, Chen \cite{BC.03}.

\bigskip 

Our paper is organized as follows: In\ Section 2 we introduce the framework
of our paper and recall in this context some basic facts from fractional
calculus and Malliavin calculus for (fractional) Brownian noise. Further, in
Section 3 we discuss an integration by parts formula based on a local time
on a simplex, which we want to employ in connection with a compactness
criterion from Malliavin calculus in Section 5. Section 4 is devoted to the
study of the local time of the fractional Brownian motion and its
properties. Finally, in Section 5 we prove the existence of a strong
solution to (\ref{SDEfBm}) by using the results of the previous sections.   

\bigskip

\section{Framework}\label{Frame}
In this section we pass in review some theory on fractional calculus, Malliavin calculus for fractional Brownian noise and occupation measures which will be progressively used throughout the article. The reader might consult \cite{Mall97}, \cite{Mall78} or \cite{DOP08} for a general theory on Malliavin calculus for Brownian motion and \cite[Chapter 5]{Nua10} for fractional Brownian motion. For theory on occupation measures we refer to \cite{geman.horo.80} or \cite{Kar98}.

\subsection{Fractional calculus}\label{fraccal}
We start up here with some basic definitions and properties of fractional derivatives and integrals. For more information see \cite{samko.et.al.93} and \cite{lizorkin.01}.

Let $a,b\in \R$ with $a<b$. Let $f\in L^p([a,b])$ with $p\geq 1$ and $\alpha>0$. Introduce the \emph{left-} and \emph{right-sided Riemann-Liouville fractional integrals} by
$$I_{a^+}^\alpha f(x) = \frac{1}{\Gamma (\alpha)} \int_a^x (x-y)^{\alpha-1}f(y)dy$$
and
$$I_{b^-}^\alpha f(x) = \frac{1}{\Gamma (\alpha)} \int_x^b (y-x)^{\alpha-1}f(y)dy$$
for almost all $x\in [a,b]$ where $\Gamma$ is the Gamma function.

Further, for a given integer $p\geq 1$, let $I_{a^+}^{\alpha} (L^p)$ (resp. $I_{b^-}^{\alpha} (L^p)$) be the image of $L^p([a,b])$ of the operator $I_{a^+}^\alpha$ (resp. $I_{b^-}^\alpha$). If $f\in I_{a^+}^{\alpha} (L^p)$ (resp. $f\in I_{b^-}^{\alpha} (L^p)$) and $0<\alpha<1$ then define the \emph{left-} and \emph{right-sided Riemann-Liouville fractional derivatives} by
$$D_{a^+}^{\alpha} f(x)= \frac{1}{\Gamma (1-\alpha)} \frac{\diff}{\diff x} \int_a^x \frac{f(y)}{(x-y)^{\alpha}}dy$$
and
$$D_{b^-}^{\alpha} f(x)= \frac{1}{\Gamma (1-\alpha)} \frac{\diff}{\diff x} \int_x^b \frac{f(y)}{(y-x)^{\alpha}}dy.$$

The left- and right-sided derivatives of $f$ defined as above can be represented as follows by
$$D_{a^+}^{\alpha} f(x)= \frac{1}{\Gamma (1-\alpha)} \left(\frac{f(x)}{(x-a)^\alpha}+\alpha\int_a^x \frac{f(x)-f(y)}{(x-y)^{\alpha+1}}dy\right)$$
and
$$D_{b^-}^{\alpha} f(x)= \frac{1}{\Gamma (1-\alpha)} \left(\frac{f(x)}{(b-x)^\alpha}+\alpha\int_x^b \frac{f(x)-f(y)}{(y-x)^{\alpha+1}}dy\right).$$

Finally, we see by construction that the following relations are valid
$$I_{a^+}^\alpha (D_{a^+}^{\alpha} f) = f$$
for all $f\in I_{a^+}^{\alpha} (L^p)$ and
$$D_{a^+}^{\alpha}(I_{a^+}^\alpha  f) = f$$
for all $f\in L^p([a,b])$ and similarly for $I_{b^-}^{\alpha}$ and $D_{b^-}^{\alpha}$.

\subsection{Shuffles}\label{shuffles}
Let $m$ and $n$ be integers. We denote by $S(m,n)$ the set of \emph{shuffle permutations}, i.e. the set of permutations $\sigma: \{1, \dots, m+n\} \rightarrow \{1, \dots, m+n\}$ such that $\sigma(1) < \dots < \sigma(m)$ and $\sigma(m+1) < \dots < \sigma(m+n)$.

The $m$-dimensional simplex is defined as
$$
\Delta_{\theta,t}^m := \{(s_m,\dots,s_1)\in [0,T]^m : \, \theta<s_m<\cdots < s_1<t\}.
$$
The product of two simplices then is given by the following union
$$
\Delta_{\theta,t}^m \times \Delta_{\theta,t}^n = \mbox{\footnotesize $\bigcup_{\sigma \in S(m,n)} \{(w_{m+n},\dots,w_1)\in [0,T]^{m+n} : \, \theta< w_{\sigma(m+n)} <\cdots < w_{\sigma(1)} <t\} \cup \mathcal{N}$ \normalsize},
$$
where the set $\mathcal{N}$ has null Lebesgue measure. Thus, if $f_i:[0,T] \rightarrow \R$, $i=1,\dots,m+n$ are integrable functions we obtain that
\begin{align} \label{shuffleIntegral}
\int_{\Delta_{\theta,t}^m} \prod_{j=1}^m f_j(s_j) ds_m \dots ds_1 & \int_{\Delta_{\theta,t}^n} \prod_{j=m+1}^{m+n} f_j(s_j) ds_{m+n} \dots ds_{m+1}  \notag \\
&= \sum_{\sigma\in S(m,n)} \int_{\Delta_{\theta,t}^{m+n}} \prod_{j=1}^{m+n} f_{\sigma(j)} (w_j) dw_{m+n}\cdots dw_1. 
\end{align}

We hereby give a slight generalization of the above lemma, whose proof can be also found in \cite{BNP.16}. This lemma will be used in Section 5. The reader may skip this lemma at first reading.

\begin{lem}\label{partialshuffle}
Let $n,p$ and $k$ be integers, $k \leq n$. Assume we have integrable functions $f_j : [0,T] \rightarrow \R$, $j = 1, \dots, n$ and $g_i : [0,T] \rightarrow \R$, $i = 1, \dots, p$. We may then write
\begin{align*}
\int_{\Delta_{\theta,t}^n} f_1(s_1) \dots f_k(s_k) \int_{\Delta_{\theta, s_k}^p} g_1(r_1) \dots g_p(r_p) dr_p \dots dr_1 f_{k+1}(s_{k+1}) \dots f_n(s_n) ds_n \dots ds_1 \\
= \sum_{\sigma \in A_{n,p}} \int_{\Delta_{\theta,t}^{n+p}} h^{\sigma}_1(w_1) \dots h^{\sigma}_{n+p}(w_{n+p}) dw_{n+p} \dots dw_1,
\end{align*}
where $h^{\sigma}_l \in \{ f_j, g_i : 1 \leq j \leq n, 1 \leq i \leq p\}$. Here $A_{n,p}$ is a subset of permutations of $\{1, \dots, n+p\}$ such that $\# A_{n,p} \leq C^{n+p}$ for a constant $C \geq 1$, and we use the definition $s_0 = \theta$.
\end{lem}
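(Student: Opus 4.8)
The plan is to reduce the statement to a single application of the shuffle identity (\ref{shuffleIntegral}). First I would freeze the ``outer'' variables $s_1>\cdots>s_k$ (with $s_1<t$) and observe that the inner integral $\int_{\Delta_{\theta,s_k}^p}g_1(r_1)\cdots g_p(r_p)\,dr_p\cdots dr_1$ depends only on $s_k$, and not on $s_{k+1},\dots,s_n$. Hence, with $s_1,\dots,s_k$ fixed, this inner $g$-integral factors out of the integration in $s_{k+1},\dots,s_n$ (which runs over $\Delta_{\theta,s_k}^{n-k}$), leaving the product
\begin{equation*}
\left(\int_{\Delta_{\theta,s_k}^{n-k}}f_{k+1}(s_{k+1})\cdots f_n(s_n)\,ds_n\cdots ds_{k+1}\right)\left(\int_{\Delta_{\theta,s_k}^{p}}g_1(r_1)\cdots g_p(r_p)\,dr_p\cdots dr_1\right),
\end{equation*}
which is precisely the left-hand side of (\ref{shuffleIntegral}) --- applied to the families $(f_{k+1},\dots,f_n)$ and $(g_1,\dots,g_p)$ --- with lower endpoint $\theta$ and upper endpoint $s_k$ in place of $t$.

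Next I would invoke (\ref{shuffleIntegral}) to rewrite this product as $\sum_{\sigma\in S(n-k,p)}\int_{\Delta_{\theta,s_k}^{n-k+p}}H^{\sigma}_1(w_1)\cdots H^{\sigma}_{n-k+p}(w_{n-k+p})\,dw_{n-k+p}\cdots dw_1$, where each $H^{\sigma}_j$ lies in $\{f_{k+1},\dots,f_n,g_1,\dots,g_p\}$. Plugging this back under $\int_{\Delta_{\theta,t}^k}f_1(s_1)\cdots f_k(s_k)\,(\,\cdot\,)\,ds_k\cdots ds_1$ and interchanging the finite sum with the integral, the key geometric point is that the region
\begin{equation*}
\{\,\theta<s_k<\cdots<s_1<t,\ \ \theta<w_{n-k+p}<\cdots<w_1<s_k\,\}
\end{equation*}
is, after relabelling the $n+p$ variables as one decreasing chain, exactly the simplex $\Delta_{\theta,t}^{n+p}$, since the two chains glued at $s_k$ form a single chain (the overlap with equal coordinates being Lebesgue null). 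Under this relabelling each summand becomes $\int_{\Delta_{\theta,t}^{n+p}}h^{\sigma}_1(w_1)\cdots h^{\sigma}_{n+p}(w_{n+p})\,dw_{n+p}\cdots dw_1$ with $h^{\sigma}_\ell=f_\ell$ for $1\le\ell\le k$ and $h^{\sigma}_{k+j}=H^{\sigma}_j$ for $1\le j\le n-k+p$, so that indeed $h^{\sigma}_\ell\in\{f_1,\dots,f_n,g_1,\dots,g_p\}$.

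Finally I would take $A_{n,p}$ to be the copy of $S(n-k,p)$ obtained by extending each shuffle by the identity on the first $k$ coordinates, and estimate $\#A_{n,p}=\#S(n-k,p)=\binom{n-k+p}{p}\le 2^{n-k+p}\le 2^{n+p}$, which yields the assertion with $C=2$; the convention $s_0=\theta$ takes care of the boundary reading. I do not expect a genuine obstacle here: the argument is essentially careful bookkeeping, and the only steps that need attention are (i) recognising that the inner $g$-integral does not depend on $s_{k+1},\dots,s_n$ before factoring it out, and (ii) using (\ref{shuffleIntegral}) with the \emph{variable} upper endpoint $s_k$, which is legitimate because that identity holds with arbitrary endpoints.
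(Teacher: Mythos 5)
Your argument is correct, and it takes a genuinely different route from the paper. The paper proves the lemma by induction on $n$, peeling off the outermost variable $s_1$ at each step: the base case and the $k=1$ case are handled by a direct appeal to (\ref{shuffleIntegral}), while for $k\geq 2$ the induction hypothesis is applied to the inner integral over $\Delta_{\theta,s_1}^{n}$ and the extra variable is then glued back on, which is why the set $A_{n,p}$ in the paper is only described recursively. You instead observe at the outset that, once $s_1>\cdots>s_k$ are frozen, the $g$-integral depends only on $s_k$ and therefore factors against the integral of $f_{k+1},\dots,f_n$ over $\Delta_{\theta,s_k}^{n-k}$; a single application of (\ref{shuffleIntegral}) with endpoints $\theta$ and $s_k$ then does all the work, and the two chains glued at $s_k$ form one simplex $\Delta_{\theta,t}^{n+p}$. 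This is cleaner, avoids the induction entirely, and identifies $A_{n,p}$ explicitly as (a copy of) $S(n-k,p)$, giving the sharper count $\#A_{n,p}=\binom{n-k+p}{p}\leq 2^{n+p}$; it also makes transparent the remark following the lemma that $A_{n,p}$ depends on $k$. The only cases your argument passes over silently are $k=0$ (where, with the convention $s_0=\theta$, the inner integral is over the empty simplex and both sides vanish) and the purely formal use of Fubini to move the $g$-integral past $f_{k+1},\dots,f_n$, both of which are harmless under the integrability hypothesis.
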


\begin{proof}
The proof of the result is given by induction on $n$. For $n=1$ and $k=0$ the result is trivial. For $k=1$ we have
\begin{align*}
\int_{\theta}^t f_1(s_1) \int_{\Delta_{\theta,s_1}^p} g_1(r_1) \dots g_p(r_p) & dr_p \dots dr_1 ds_1 \\
 &  = \int_{\Delta_{\theta,t}^{p+1}} f_1(w_1) g_1(w_2) \dots g_p(w_{p+1})  dw_{p+1} \dots dw_1,
\end{align*}
where we have put $w_1 =s_1, w_2 =  r_1, \dots, w_{p+1} = r_p$.

Assume the result holds for $n$ and let us show that this implies that the result is true for $n+1$. Either $k=0,1$ or $2 \leq k \leq n+1$. For $k=0$ the result is trivial. For $k=1$ we have
\begin{align*}
\int_{\Delta_{\theta,t}^{n+1}} & f_1(s_1) \int_{\Delta_{\theta,s_1}^p} g_1(r_1) \dots g_p(r_p) dr_p \dots dr_1 f_2(s_2) \dots  f_{n+1}(s_{n+1}) ds_{n+1} \dots ds_1 \\
&= \int_{\theta}^t f_1(s_1) \left( \int_{\Delta_{\theta,s_1}^n} \int_{\Delta_{\theta,s_1}^p} g_1(r_1) \dots g_p(r_p) dr_p \dots dr_1 f_2(s_2) \dots  f_{n+1}(s_{n+1}) ds_{n+1} \dots ds_2 \right) ds_1 .
\end{align*}
The result follows from (\ref{shuffleIntegral}) coupled with $ \# S(n,p) = \frac{(n+p)!}{n! p!} \leq C^{n+p} \leq C^{(n+1) + p}$. For $k \geq 2$ we have from the induction hypothesis
\begin{align*}
\int_{\Delta_{\theta,t}^{n+1}}   f_1(s_1) \dots f_k(s_k) \int_{\Delta_{\theta, s_k}^p}  g_1(r_1) \dots g_p(r_p) & dr_p \dots dr_1 f_{k+1}(s_{k+1}) \dots f_{n+1}(s_{n+1}) ds_{n+1} \dots ds_1 \\
 = \int_{\theta}^t f_1(s_1)   \int_{\Delta_{\theta,s_1}^n} f_2(s_2) \dots f_k(s_k) & \int_{\Delta_{\theta, s_k}^p} g_1(r_1) \dots g_p(r_p) dr_p \dots dr_1  \\
&  \times  f_{k+1}(s_{k+1}) \dots f_{n+1}(s_{n+1}) ds_{n+1} \dots ds_2  ds_1 \\
 =   \sum_{\sigma \in A_{n,p}} \int_{\theta}^t f_1(s_1)   \int_{\Delta_{\theta,s_1}^{n+p}} & h^{\sigma}_1(w_1) \dots h^{\sigma}_{n+p}(w_{n+p}) dw_{n+p} \dots dw_1 ds_1\\
= \sum_{\tilde{\sigma} \in A_{n+1,p}} \int_{\Delta_{\theta,t}^{n+1+p}} & h^{\tilde{\sigma}}_1(w_1) \dots \tilde{h}^{\tilde{\sigma}}_{w_{n+1+p}} dw_1 \dots dw_{n+1+p},
\end{align*}
where $A_{n+1,p}$ is the set of permutations $\tilde{\sigma}$ of $\{1, \dots, n+1+p\}$ such that $\tilde{\sigma}(1) = 1$ and $\tilde{\sigma}(j+1) = \sigma(j)$, $j=1, \dots, n+p$ for some $\sigma \in A_{n,p}$ .

\end{proof}

\begin{rem}
We remark that the set $A_{n,p}$ in the above Lemma also depends on $k$ but we shall not make use of this fact.
\end{rem}

\subsection{Fractional Brownian motion}
Denote by $B^H = \{B_t^H, t\in [0,T]\}$ a $d$-dimensional \emph{fractional Brownian motion} with Hurst parameter $H\in (0,1/2)$. So $B^H$ is a centered Gaussian process with covariance structure
$$(R_H(t,s))_{i,j}:= E[B_t^{H,(i)} B_s^{H,(j)}]=\delta _{ij}\frac{1}{2}\left(t^{2H} + s^{2H} - |t-s|^{2H} \right), \quad i,j=1,\dots,d,$$
where $\delta _{ij}$ is one, if $i=j$, or zero else.
Observe that $E[|B_t^H - B_s^H|^2]= d|t-s|^{2H}$ and hence $B^H$ has stationary increments and H\"{o}lder continuous trajectories of index $H-\varepsilon$ for all $\varepsilon\in(0,H)$. Observe that the increments of $B^H$, $H\in (0,1/2)$ are not independent. As a matter of fact, this process does not satisfy the Makov property, either. Another obstacle one is faced with is that $B^H$ is not a semimartingale, see e.g. \cite[Proposition 5.1.1]{Nua10}.

We give an abridged survey on how to construct fractional Brownian motion via an isometry. We will do it in one dimension inasmuch as we will treat the multidimensional case componentwise. See \cite{Nua10} for further details.

Let $\mathcal{E}$ be the set of step functions on $[0,T]$ and let $\mathcal{H}$ be the Hilbert space given by the closure of $\mathcal{E}$ with respect to the inner product 
$$\langle 1_{[0,t]} , 1_{[0,s]}\rangle_{\mathcal{H}} = R_H(t,s).$$
The mapping $1_{[0,t]} \mapsto B_t$ has an extension to an isometry between $\mathcal{H}$ and the Gaussian subspace of $L^2(\Omega)$ associated with $B^H$. We denote the isometry by $\varphi \mapsto B^H(\varphi)$. Let us recall the following result (see \cite[Proposition 5.1.3]{Nua10} ) which gives an integral representation of $R_H(t,s)$ when $H<1/2$:

\begin{prop}
Let $H<1/2$. The kernel
$$K_H(t,s)= c_H \left[\left( \frac{t}{s}\right)^{H- \frac{1}{2}} (t-s)^{H- \frac{1}{2}} + \left( \frac{1}{2}-H\right) s^{\frac{1}{2}-H} \int_s^t u^{H-\frac{3}{2}} (u-s)^{H-\frac{1}{2}} du\right],$$
where $c_H = \sqrt{\frac{2H}{(1-2H) \beta(1-2H , H+1/2)}}$ being $\beta$ the Beta function, satisfies
\begin{align}\label{RH}
R_H(t,s) = \int_0^{t\wedge s} K_H(t,u)K_H(s,u)du.
\end{align}
\end{prop}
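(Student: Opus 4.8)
The plan is to verify \eqref{RH} directly, using the Riemann--Liouville calculus recalled in Subsection~\ref{fraccal}. Since both sides of \eqref{RH} are symmetric in $(t,s)$, I would assume $0<s\le t$ throughout, so that $t\wedge s=s$; one could moreover invoke the scaling $K_H(\lambda t,\lambda s)=\lambda^{H-\frac12}K_H(t,s)$ and $R_H(\lambda t,\lambda s)=\lambda^{2H}R_H(t,s)$ to reduce to $s=1$, turning \eqref{RH} into a one-variable identity, but keeping the two-variable formulation costs nothing.

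The heart of the matter is the identity, valid for $0<s<t$,
$$K_H(t,s)=c_H\,\Gamma\!\left(H+\tfrac12\right)\,s^{\frac12-H}\Big(D_{t^-}^{\frac12-H}\big[u^{H-\frac12}\mathbf{1}_{(0,t)}(u)\big]\Big)(s),$$
which exhibits $K_H(t,\cdot)$ as a weighted right-sided fractional derivative of order $\tfrac12-H$. I would prove it by inserting the ``increment'' representation of $D_{t^-}^{\frac12-H}$ from Subsection~\ref{fraccal}, namely $D_{t^-}^{\alpha}f(s)=\frac{1}{\Gamma(1-\alpha)}\big(\frac{f(s)}{(t-s)^{\alpha}}+\alpha\int_s^t\frac{f(s)-f(u)}{(u-s)^{\alpha+1}}\,du\big)$ with $f(u)=u^{H-\frac12}$ and $\alpha=\tfrac12-H$, and then integrating by parts in $u$ in the remaining integral: the boundary term of that integration by parts combines with $f(s)/(t-s)^{\alpha}$ to produce the first summand $(t/s)^{H-\frac12}(t-s)^{H-\frac12}$ of $K_H$, while the leftover integral is exactly the second summand $(\tfrac12-H)s^{\frac12-H}\int_s^t u^{H-\frac32}(u-s)^{H-\frac12}\,du$. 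It is precisely in this step that the normalising constant $c_H$ and the specific two-term shape of $K_H$ are forced.

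Granting the identity, fix $0<s\le t$ and substitute the expression for the factor $K_H(s,\cdot)$ (with base point $s$) into the integral, obtaining
$$\int_0^{s}K_H(t,u)K_H(s,u)\,du=c_H\Gamma\!\left(H+\tfrac12\right)\int_0^{s}\big(u^{\frac12-H}K_H(t,u)\big)\,\Big(D_{s^-}^{\frac12-H}\big[v^{H-\frac12}\big]\Big)(u)\,du .$$
Now I would apply the fractional integration-by-parts rule $\int_0^s \phi\,(D_{s^-}^{\alpha}\psi)\,du=\int_0^s (D_{0^+}^{\alpha}\phi)\,\psi\,du$ (a standard property of the Riemann--Liouville operators, cf. \cite{samko.et.al.93}) with $\alpha=\tfrac12-H$, $\phi(u)=u^{\frac12-H}K_H(t,u)$ and $\psi(v)=v^{H-\frac12}$; its validity follows once one checks, from $K_H(t,u)=O(u^{H-1/2})$ as $u\downarrow0$ and the continuity of $K_H(t,\cdot)$ on $(0,t)$, that $\phi$ lies in the image space $I_{0^+}^{\frac12-H}(L^{p})$ and that the boundary contributions vanish. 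This reduces the problem to evaluating $c_H\Gamma(H+\tfrac12)\int_0^{s}\big(D_{0^+}^{\frac12-H}[u^{\frac12-H}K_H(t,u)]\big)(v)\,v^{H-\frac12}\,dv$; inserting the explicit $K_H(t,\cdot)$ once more and using the inversion relations $I_{0^+}^{\alpha}D_{0^+}^{\alpha}=\mathrm{id}$, $D_{t^-}^{\alpha}I_{t^-}^{\alpha}=\mathrm{id}$ from Subsection~\ref{fraccal} together with a few explicit Beta integrals $\int_0^{s}u^{a}(t-u)^{b}(s-u)^{c}\,du$, the integrand collapses and the whole expression works out to $\tfrac12\big(t^{2H}+s^{2H}-(t-s)^{2H}\big)=R_H(t,s)$, which is \eqref{RH}; the additive constant produced by the integrations is pinned down by a single limiting evaluation, e.g. letting $t\downarrow s$, which gives the diagonal case $\int_0^{s}K_H(s,u)^2\,du=s^{2H}$.

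The main difficulty is the analytic bookkeeping in the last two steps rather than any conceptual point: the functions in play are singular simultaneously at $0$, at $s$ and at $t$, so each application of the integration-by-parts and inversion formulas for the Riemann--Liouville operators must be justified in the correct function spaces with genuinely vanishing boundary terms, the power weights $u^{H-1/2}$ and $s^{1/2-H}$ interleaved with the fractional operators have to be carried along carefully, and the concluding Beta-function computation --- though elementary --- must reproduce exactly the constant $c_H^{2}$ built into the definition of $K_H$.
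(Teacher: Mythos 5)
The paper does not actually prove this proposition: it is recalled verbatim from \cite[Proposition 5.1.3]{Nua10}, so there is no internal argument to compare yours against. Judged on its own terms, the first stage of your sketch is correct and is even recorded in the paper immediately after the proposition: inserting $f(u)=u^{H-1/2}$, $\alpha=\tfrac12-H$ into the increment form of $D_{t^-}^{\alpha}$ and integrating by parts does reproduce the two-term formula for $K_H(t,s)$ (the boundary term combines with $f(s)/(t-s)^{\alpha}$ to give $(t/s)^{H-\frac12}(t-s)^{H-\frac12}$, and the leftover integral is the second summand), so the representation $K_H(t,s)=c_H\Gamma(H+\tfrac12)s^{\frac12-H}\bigl(D_{t^-}^{\frac12-H}u^{H-\frac12}\bigr)(s)$ is sound.

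The gap is in the final stage. After the fractional integration by parts you are left with $\int_0^s\bigl(D_{0^+}^{\frac12-H}[u^{\frac12-H}K_H(t,u)]\bigr)(v)\,v^{H-\frac12}\,dv$, in which a \emph{left}-sided derivative based at $0$ acts on a function built from a \emph{right}-sided derivative based at $t$, with a power weight interleaved. The inversion relations $I_{0^+}^{\alpha}D_{0^+}^{\alpha}=\mathrm{id}$ and $D_{t^-}^{\alpha}I_{t^-}^{\alpha}=\mathrm{id}$ cannot make this ``collapse'': left- and right-sided operators are not mutually inverse, and no $I_{0^+}^{\alpha}$ is present to cancel the $D_{0^+}^{\alpha}$. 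What remains is exactly the nontrivial multi-parameter Beta/hypergeometric evaluation that \emph{is} the content of the proposition, and it is asserted rather than carried out; note also that the naive route of matching mixed derivatives $\partial_t\partial_s$ fails here because $\partial_t\partial_s R_H=H(2H-1)|t-s|^{2H-2}$ is not locally integrable near the diagonal when $H<\tfrac12$. Two further points undermine the sketch as written: a direct evaluation of a definite integral produces no ``additive constant'' to be pinned down, so the appeal to the limit $t\downarrow s$ signals that the argument is not the computation it claims to be; and the diagonal identity $\int_0^sK_H(s,u)^2\,du=s^{2H}$ is itself the normalization that determines $c_H$, hence cannot be invoked as an easy special case. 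If you want a self-contained verification, the workable route is the one in \cite{decreu.ustunel.98} and \cite{Nua10}, which exploits the factorization of $K_H$ as a composition of one-sided fractional integrals and power weights (the operator identity $(K_H\varphi)(s)=I_{0^+}^{2H}s^{\frac12-H}I_{0^+}^{\frac12-H}s^{H-\frac12}\varphi$ quoted later in Section 2), together with the simple closed form $\partial_tK_H(t,s)=c_H(H-\tfrac12)(t/s)^{H-\frac12}(t-s)^{H-\frac32}$; there the constant $c_H$ enters through one explicit Beta integral.
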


The kernel $K_H$ also has the following representation by means of fractional derivatives
$$K_H(t,s) = c_H \Gamma \left( H+\frac{1}{2}\right) s^{\frac{1}{2}-H} \left( D_{t^-}^{\frac{1}{2}-H} u^{H-\frac{1}{2}}\right)(s).$$

Consider now the linear operator $K_H^{\ast}: \mathcal{E} \rightarrow L^2([0,T])$ defined by
$$(K_H^{\ast} \varphi)(s) = K_H(T,s)\varphi(s) + \int_s^T (\varphi(t)-\varphi(s)) \frac{\partial K_H}{\partial t}(t,s)dt$$
for every $\varphi \in \mathcal{E}$. We see that $(K_H^{\ast} 1_{[0,t]})(s) = K_H(t,s)1_{[0,t]}(s)$, then from this fact and \eqref{RH} one can conclude that $K_H^{\ast}$ is an isometry between $\mathcal{E}$ and $L^2([0,T])$ which extends to the Hilbert space $\mathcal{H}$. See e.g. \cite{decreu.ustunel.98} and \cite{alos.mazet.nualart.01} and the references therein.

For a given $\varphi\in \mathcal{H}$ one proves that $K_H^{\ast}$can be represented in terms of fractional derivatives in the following ways
$$(K_H^{\ast} \varphi)(s) = c_H \Gamma\left( H+\frac{1}{2}\right) s^{\frac{1}{2}-H} \left(D_{T^-}^{\frac{1}{2}-H} u^{H-\frac{1}{2}}\varphi(u)\right) (s)$$
and
\begin{align*}
(K_H^{\ast} \varphi)(s) =& \, c_H \Gamma\left( H+\frac{1}{2}\right)\left(D_{T^-}^{\frac{1}{2}-H} \varphi(s)\right) (s)\\
&+ c_H \left( \frac{1}{2}-H\right)\int_s^T  \varphi(t) (t-s)^{H-\frac{3}{2}} \left(1- \left(\frac{t}{s}\right)^{H-\frac{1}{2}}\right)dt.
\end{align*}

One finds that $\mathcal{H} = I_{T^-}^{\frac{1}{2}-H}(L^2)$ (see \cite{decreu.ustunel.98} and \cite[Proposition 6]{alos.mazet.nualart.01}).

Using the fact that $K_H^{\ast}$ is an isometry from $\mathcal{H}$ into $L^2([0,T])$ the $d$-dimensional process $W=\{W_t, t\in [0,T]\}$ defined by
\begin{align}\label{WBH}
W_t := B^H((K_H^{\ast})^{-1}(1_{[0,t]}))
\end{align}
is a Wiener process and the process $B^H$ can be represented as follows
\begin{align}\label{BHW}
B_t^H = \int_0^t K_H(t,s) dW_s,
\end{align}
see \cite{alos.mazet.nualart.01}.

We also need to introduce the concept of fractional Brownian motion associated
with a filtration.
\begin{defn}
Let $\mathcal{G}=\left\{ \mathcal{G}_{t}\right\} _{t\in\left[0,T\right]}$
be a right-continuous increasing family of $\sigma$-algebras on $\left(\Omega,\mathcal{F},P\right)$
such that $\mathcal{G}_{0}$ contains the null sets. A fractional
Brownian motion $B^{H}$ is called a $\mathcal{G}$-fractional Brownian
motion if the process $W$ defined by \eqref{WBH} is a $\mathcal{G}$-Brownian
motion.
\end{defn}

In what follows, we will denote by $W$ a standard Wiener process on a given probability space $(\Omega, \mathfrak{A}, P)$ equipped with the natural filtration $\mathcal{F}=\{\mathcal{F}_t\}_{t\in [0,T]}$ which is generated by $W$ and augmented by all $P$-null sets, we shall denote by $B:=B^H$ the fractional Brownian motion with Hurst parameter $H\in (0,1/2)$ given by the representation \eqref{BHW}.

In this paper, we want to make use of a version of Girsanov's theorem for fractional Brownian motion which is due to \cite[Theorem 4.9]{decreu.ustunel.98}. Here we recall the version given in \cite[Theorem 2]{nualart.ouknine.02}. However, we first need the definition of an isomorphism $K_H$ from $L^2([0,T])$ onto $I_{0+}^{H+\frac{1}{2}}(L^2)$ associated with the kernel $K_H(t,s)$ in terms of the fractional integrals as follows, see \cite[Theorem 2.1]{decreu.ustunel.98}
$$(K_H \varphi)(s) = I_{0^+}^{2H} s^{\frac{1}{2}-H} I_{0^+}^{\frac{1}{2}-H}s^{H-\frac{1}{2}}  \varphi, \quad \varphi \in L^2([0,T]).$$

It follows from this and the properties of the Riemann-Liouville fractional integrals and derivatives that the inverse of $K_H$ takes the form
$$(K_H^{-1} \varphi)(s) = s^{\frac{1}{2}-H} D_{0^+}^{\frac{1}{2}-H} s^{H-\frac{1}{2}} D_{0^+}^{2H} \varphi(s), \quad \varphi \in I_{0+}^{H+\frac{1}{2}}(L^2).$$

The latter implies that if $\varphi$ is absolutely continuous, see \cite{nualart.ouknine.02}, one has
\begin{align}\label{inverseKH}
(K_H^{-1} \varphi)(s) = s^{H-\frac{1}{2}} I_{0^+}^{\frac{1}{2}-H} s^{\frac{1}{2}-H}\varphi'(s).
\end{align}

\begin{thm}[Girsanov's theorem for fBm]\label{girsanov}
Let $u=\{u_t, t\in [0,T]\}$ be an $\mathcal{F}$-adapted process with integrable trajectories  and set
$\widetilde{B}_t^H = B_t^H + \int_0^t u_s ds, \quad t\in [0,T].$
Assume that
\begin{itemize}
\item[(i)] $\int_0^{\cdot} u_s ds \in I_{0+}^{H+\frac{1}{2}} (L^2 ([0,T])$, $P$-a.s.

\item[(ii)] $E[\xi_T]=1$ where
$$\xi_T := \exp\left\{-\int_0^T K_H^{-1}\left( \int_0^{\cdot} u_r dr\right)(s)dW_s - \frac{1}{2} \int_0^T K_H^{-1} \left( \int_0^{\cdot} u_r dr \right)^2(s)ds \right\}.$$
\end{itemize}
Then the shifted process $\widetilde{B}^H$ is an $\mathcal{F}$-fractional Brownian motion with Hurst parameter $H$ under the new probability $\widetilde{P}$ defined by $\frac{d\widetilde{P}}{dP}=\xi_T$.
\end{thm}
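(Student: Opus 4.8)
The plan is to reduce the statement to the classical Girsanov (Cameron--Martin--Maruyama) theorem for the underlying Wiener process $W$, exploiting the Volterra representation \eqref{BHW}. Set $\varphi_u(t):=\int_0^t u_s\,ds$. By hypothesis (i), $\varphi_u\in I_{0+}^{H+\frac{1}{2}}(L^2([0,T]))$, which is exactly the range of the isomorphism $K_H\colon L^2([0,T])\to I_{0+}^{H+\frac{1}{2}}(L^2)$, so $\ell:=K_H^{-1}(\varphi_u)\in L^2([0,T])$ is well defined. Since $\varphi_u$ is absolutely continuous with $\varphi_u'=u$, formula \eqref{inverseKH} gives $\ell(s)=s^{H-\frac{1}{2}}\bigl(I_{0^+}^{\frac{1}{2}-H}r^{\frac{1}{2}-H}u_r\bigr)(s)$; in particular $\ell$ is $\mathcal F$-adapted, because $I_{0^+}^{\frac{1}{2}-H}$ involves only the values of $u$ on $[0,s]$. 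Using the operator identity $K_HK_H^{-1}=\mathrm{id}$ on $I_{0+}^{H+\frac{1}{2}}(L^2)$ together with the fact that $K_H$ acts as the kernel operator $(K_Hg)(t)=\int_0^tK_H(t,s)g(s)\,ds$ with the same kernel as in \eqref{BHW}, we get $\int_0^t u_s\,ds=(K_H\ell)(t)=\int_0^tK_H(t,s)\ell(s)\,ds$, whence
$$\widetilde B_t^H=B_t^H+\int_0^t u_s\,ds=\int_0^tK_H(t,s)\,dW_s+\int_0^tK_H(t,s)\ell(s)\,ds=\int_0^tK_H(t,s)\,d\widetilde W_s,$$
where $\widetilde W_t:=W_t+\int_0^t\ell(s)\,ds=W_t+\int_0^tK_H^{-1}\bigl(\int_0^\cdot u_r\,dr\bigr)(s)\,ds$.

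The second step is to apply the classical Girsanov theorem to the $\mathcal F$-Brownian motion $W$. Since $\ell\in L^2([0,T])$ is $\mathcal F$-adapted and, by hypothesis (ii), $E[\xi_T]=1$ with $\xi_T=\exp\bigl\{-\int_0^T\ell(s)\,dW_s-\frac{1}{2}\int_0^T\ell(s)^2\,ds\bigr\}$, the set function $\widetilde P$ defined by $d\widetilde P/dP=\xi_T$ is a probability measure and $\widetilde W$ is a standard $\mathcal F$-Brownian motion under $\widetilde P$. (The hypothesis $E[\xi_T]=1$ is precisely what makes this work; no Novikov-type criterion is required a priori, since integrability of the exponential is assumed outright.)

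It remains to conclude. Under $\widetilde P$ the process $\widetilde B^H$ has the Volterra representation $\widetilde B_t^H=\int_0^tK_H(t,s)\,d\widetilde W_s$ with respect to the $\widetilde P$-Brownian motion $\widetilde W$, hence by the very construction of fractional Brownian motion from a Wiener process in \eqref{WBH}--\eqref{BHW} it is an fBm with Hurst parameter $H$ under $\widetilde P$; moreover $\widetilde B^H$ is $\mathcal F$-adapted and, because $\widetilde W$ is the Wiener process canonically attached to $\widetilde B^H$ via \eqref{WBH} (the function $(K_H^{\ast})^{-1}1_{[0,t]}$ being deterministic) and $\widetilde W$ is a $\mathcal G=\mathcal F$-Brownian motion, $\widetilde B^H$ is an $\mathcal F$-fractional Brownian motion under $\widetilde P$ in the sense of the definition preceding the theorem. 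The main obstacle is the first step: one must check carefully that condition (i) places the drift $\varphi_u$ in the range of $K_H$, that $K_H^{-1}$ returns an $\mathcal F$-adapted element of $L^2$, and that the operator identities ($K_HK_H^{-1}=\mathrm{id}$ and the kernel representation of $K_H$) legitimately absorb the drift into a shift of the driving Brownian motion inside the stochastic integral; once this is done, the remaining steps are a direct invocation of classical Girsanov and of the characterization of fBm via its Volterra representation.
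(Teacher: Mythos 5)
Your argument is correct and is essentially the standard proof of this result: the paper itself does not prove the theorem but recalls it from Decreusefond--\"Ust\"unel \cite{decreu.ustunel.98} and Nualart--Ouknine \cite{nualart.ouknine.02}, and your reduction to the classical Girsanov theorem for $W$ via the Volterra representation $\widetilde{B}_t^H=\int_0^t K_H(t,s)\,d\widetilde{W}_s$ with $\widetilde{W}_t=W_t+\int_0^t K_H^{-1}\bigl(\int_0^{\cdot}u_r\,dr\bigr)(s)\,ds$ is exactly the route taken in those references. The points you flag as needing care (that hypothesis (i) places the drift in the range of $K_H$, the adaptedness of $K_H^{-1}\bigl(\int_0^{\cdot}u_r\,dr\bigr)$ via \eqref{inverseKH}, and that $E[\xi_T]=1$ upgrades the supermartingale $\xi$ to a true martingale) are precisely the substantive ones, and you handle them correctly.
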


\begin{rem}
As for the multidimensional case, define
$$(K_H \varphi)(s):= ( (K_H \varphi^{(1)} )(s), \dots, (K_H  \varphi^{(d)})(s))^{\ast}, \quad \varphi \in L^2([0,T];\R^d),$$
where $\ast$ denotes transposition. Similarly for $K_H^{-1}$ and $K_H^{\ast}$.
\end{rem}

In this paper, we will also employ a crucial property of the fractional Brownian motion which was shown by \cite{pitt.78}  for general Gaussian vector fields. The latter property will be a helpful substitute for the lack of independent increments of the underlying noise.

Let $m\in \mathbb{N}$ and $0=:t_0<t_1<\cdots <t_m<T$. Then for all $\xi_1,\dots, \xi_m\in \R^d$ there exists a positive finite constant $C>0$ (depending on $m$) such that
\begin{align}\label{SLND}
\mathrm{Var}\left[ \sum_{j=1}^m \langle\xi_j, B_{t_j}-B_{t_{j-1}}\rangle_{\R^d}\right] \geq C \sum_{j=1}^m |\xi_j|^2 \mathrm{Var}\left[B_{t_j}-B_{t_{j-1}}\right].
\end{align}

The above property is referred to in literature as local non-determinism property of the fractional Brownian motion. The reader may consult \cite{pitt.78} or \cite{xiao.11} for more information on this property. A stronger version of local non-determinism is also satisfied by the fractional Brownian motion. There exists a constant $K>0$, depending only on $H$ and $T$, such
that for any $t\in\left[0,T\right],0<r<t$ and for $i=1,\ldots,d,$
\begin{align}\label{eq:SLND}
\mathrm{Var}\left[B_{t}^{H,i}|\left\{ B_{s}^{H,i}:\left|t-s\right|\geq r\right\} \right]\geq Kr^{2H}.
\end{align}

\section{An integration by parts formula}\label{Section3}

In this section we recall an integration by parts formula, which is essentially based on the local time of the Gaussian process $B^H$. The whole content as well as the proofs can be found in \cite{BNP.16}.

Let $m$ be an integer and let $f :[0,T]^m \times (\R^d)^m \rightarrow \R$ be a function of the form
\begin{align}\label{f}
f(s,z)= \prod_{j=1}^m f_j(s_j,z_j),\quad s = (s_1, \dots,s_m) \in [0,T]^m, \quad z = (z_1, \dots, z_m) \in (\R^d)^m,
\end{align}
where $f_j:[0,T]\times \R^d \rightarrow \R$, $j=1,\dots,m$ are smooth functions with compact support. Further, let $\varkappa:[0,T]^m\rightarrow \R$ be a function of the form
\begin{align}\label{kappa}
\varkappa(s)= \prod_{j=1}^m \varkappa_j(s_j), \quad s\in [0,T]^m,
\end{align}
where $\varkappa_j : [0,T] \rightarrow \R$, $j=1,\dots, m$ are integrable functions.

Next, denote by $\alpha_j$ a multiindex and $D^{\alpha_j}$ its corresponding differential operator. For $\alpha = (\alpha_1, \dots, \alpha_m)$ considered an element of $\mathbb{N}_0^{d\times m}$ so that $|\alpha|:= \sum_{j=1}^m \sum_{l=1}^d \alpha_{j}^{(l)}$, we write
$$
D^{\alpha}f(s,z) = \prod_{j=1}^m D^{\alpha_j} f_j(s_j,z_j).
$$

In this section we aim at deriving an integration by parts formula of the form
\begin{equation} \label{ibp}
\int_{\Delta_{\theta,t}^m} D^{\alpha}f(s,B_s) ds = \int_{(\R^d)^m} \Lambda^{f}_{\alpha} (\theta,t,z)dz ,
\end{equation}
for a suitable random field $\Lambda^f_{\alpha}$, where $\Delta_{\theta,t}^m$ is the $m$-dimensional simplex as defined in Section 2.2 and $B_{s}=(B_{s_{1}},...,B_{s_{m}})$ on that simplex. More specifically, we have that
\begin{equation} \label{LambdaDef}
\Lambda^f_{\alpha}(\theta,t ,z) = (2 \pi)^{-dm} \int_{(\R^d)^m} \int_{\Delta_{\theta,t}^m} \prod_{j=1}^m f_j(s_j,z_j) (-i u_j)^{\alpha_j} \exp \{ -i \langle u_j, B_{s_j} - z_j \rangle\}ds du .
\end{equation}

Let us start by \emph{defining} $\Lambda^f_{\alpha}(\theta,t,z)$ as above and show that it is a well-defined element of $L^2(\Omega)$.

To this end, we need the following notation: Given $(s,z) = (s_1, \dots, s_m ,z_1 \dots, z_m)  \in [0,T]^m \times (\R^d)^m$ and a shuffle $\sigma \in S(m,m)$ we write
$$
f_{\sigma}(s,z) := \prod_{j=1}^{2m} f_{[\sigma(j)]}(s_j, z_{[\sigma(j)]})
$$
and
$$\varkappa_{\sigma} (s) := \prod_{j=1}^{2m} \varkappa_{[\sigma(j)]}(s_j),$$
where $ [j] $ is equal to $j$ if $1 \leq j \leq m$ and $j-m$ if $m+1 \leq j \leq 2m $.

For integers $k \geq 0$ let us define the expressions
\begin{eqnarray*}
&&\Psi _{k}^{f}(\theta ,t,z) \\
&:&=\prod_{l=1}^{d}\sqrt{(2\left\vert \alpha ^{(l)}\right\vert )!}%
\sum_{\sigma \in S(m,m)}\int_{\Delta _{0,t}^{2m}}\left\vert f_{\sigma
}(s,z)\right\vert \prod_{j=1}^{2m}\frac{1}{\left\vert
s_{j}-s_{j-1}\right\vert ^{H(d+2\sum_{l=1}^{d}\alpha _{\lbrack \sigma
(j)]}^{(1)})}}ds_{1}...ds_{2m}
\end{eqnarray*}

respectively,
\begin{eqnarray*}
&&\Psi _{k}^{\varkappa }(\theta ,t) \\
&:&=\prod_{l=1}^{d}\sqrt{(2\left\vert \alpha ^{(l)}\right\vert )!}%
\sum_{\sigma \in S(m,m)}\int_{\Delta _{0,t}^{2m}}\left\vert \varkappa
_{\sigma }(s)\right\vert \prod_{j=1}^{2m}\frac{1}{\left\vert
s_{j}-s_{j-1}\right\vert ^{H(d+2\sum_{l=1}^{d}\alpha _{\lbrack \sigma
(j)]}^{(1)})}}ds_{1}...ds_{2m}.
\end{eqnarray*}

\begin{thm}\label{mainthmlocaltime}
Suppose that $\Psi _{k}^{f}(\theta ,t,z),\Psi _{k}^{\varkappa
}(\theta ,t)<\infty $. Then, defining $\Lambda _{\alpha }^{f}(\theta ,t,z)$
as in \eqref{LambdaDef} gives a random variable in $L^{2}(\Omega )$ and there exists a
universal constant $C=C(T,H,d)>0$ such that%
\begin{equation}
E[\left\vert \Lambda _{\alpha }^{f}(\theta ,t,z)\right\vert ^{2}]\leq
C^{m+\left\vert \alpha\right\vert}\Psi _{k}^{f}(\theta ,t,z).  \label{supestL}
\end{equation}%
Moreover, we have%
\begin{equation}
\left\vert E[\int_{(\mathbb{R}^{d})^{m}}\Lambda _{\alpha }^{f}(\theta
,t,z)dz]\right\vert \leq C^{m/2+\left\vert \alpha\right\vert/2}\prod_{j=1}^{m}\left\Vert
f_{j}\right\Vert _{L^{1}(\mathbb{R}^{d};L^{\infty }([0,T]))}(\Psi
_{k}^{\varkappa }(\theta ,t))^{1/2}.  \label{intestL}
\end{equation}

\end{thm}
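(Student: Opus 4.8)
The plan is to regard $\Lambda_\alpha^f(\theta,t,z)$ as an oscillatory (Fourier) integral and to reduce both \eqref{supestL} and \eqref{intestL} to the explicit computation of a Gaussian second moment, the required decay being supplied by the local non-determinism \eqref{SLND}. For \eqref{supestL} I would insert \eqref{LambdaDef} into $E[|\Lambda_\alpha^f(\theta,t,z)|^2]=E[\Lambda_\alpha^f\overline{\Lambda_\alpha^f}]$, split $e^{-i\langle u_j,B_{s_j}-z_j\rangle}=e^{i\langle u_j,z_j\rangle}e^{-i\langle u_j,B_{s_j}\rangle}$ and pull the deterministic phases $e^{\pm i\langle u_j,z_j\rangle}$ (of modulus one) out of the expectation. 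Since $B_s$ is a centred Gaussian vector, $E[\exp\{-i\sum_j\langle u_j,B_{s_j}\rangle+i\sum_j\langle u'_j,B_{s'_j}\rangle\}]=\exp\{-\tfrac12\mathrm{Var}[\sum_j\langle u_j,B_{s_j}\rangle-\sum_j\langle u'_j,B_{s'_j}\rangle]\}$ is real and positive, so after taking absolute values inside one is left with $(2\pi)^{-2dm}$ times an integral of $\prod_j|f_j(s_j,z_j)|\,|f_j(s'_j,z_j)|\,|u_j|^{|\alpha_j|}|u'_j|^{|\alpha_j|}$ against that Gaussian factor over $(\R^d)^m\times(\R^d)^m\times\Delta_{\theta,t}^m\times\Delta_{\theta,t}^m$. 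I would then use the shuffle identity \eqref{shuffleIntegral} to replace $\Delta_{\theta,t}^m\times\Delta_{\theta,t}^m$ by $\bigcup_{\sigma\in S(m,m)}\Delta_{\theta,t}^{2m}$: after this relabelling the ordered times $w_1>\cdots>w_{2m}$ are the new variables, position $p$ carries the index $[\sigma(p)]$, the function $f_{[\sigma(p)]}$, the multi-index $\alpha_{[\sigma(p)]}$ and a momentum $\tilde v_p$ equal (up to an immaterial sign) to one of the $u$'s or $u'$'s, and the Gaussian exponent becomes $\mathrm{Var}[\sum_{p=1}^{2m}\langle\tilde v_p,B_{w_p}\rangle]$.

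The core of the argument is the momentum integral on each ordered simplex. A summation by parts gives $\sum_p\langle\tilde v_p,B_{w_p}\rangle=\sum_p\langle\eta_p,B_{w_p}-B_{w_{p-1}}\rangle$ with $w_0:=\theta$ and $\eta_p$ the corresponding partial sums of the $\tilde v$'s; because the components of $B^H$ are independent, \eqref{SLND} applied componentwise yields $\mathrm{Var}[\sum_p\langle\tilde v_p,B_{w_p}\rangle]\ge C\sum_p|\eta_p|^2|w_p-w_{p-1}|^{2H}$, hence the Gaussian factor is dominated by $\prod_p\exp\{-\tfrac{C}{2}|\eta_p|^2|w_p-w_{p-1}|^{2H}\}$. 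Changing variables $\tilde v\mapsto\eta$ (triangular, unit Jacobian), bounding $\prod_p|\tilde v_p|^{|\alpha_{[\sigma(p)]}|}$ by a finite sum of monomials in the $|\eta_p|$'s, and evaluating the elementary integrals $\int_{\R^d}|\eta|^{q}e^{-c|\eta|^2\tau^{2H}}d\eta=C_{d,q}\tau^{-H(d+q)}$ produces the singular factors $\prod_p|w_p-w_{p-1}|^{-H(d+2|\alpha_{[\sigma(p)]}|)}$ times a constant that, using Stirling-type bounds on the $C_{d,q}$ and the bound $\#S(m,m)\le 4^m$, is absorbed into $C^{m+|\alpha|}\prod_{l=1}^d\sqrt{(2|\alpha^{(l)}|)!}$. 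What survives is exactly $\sum_{\sigma\in S(m,m)}\int_{\Delta_{\theta,t}^{2m}}|f_\sigma(w,z)|\prod_p|w_p-w_{p-1}|^{-H(d+2|\alpha_{[\sigma(p)]}|)}dw$, i.e.\ $\Psi_k^f(\theta,t,z)$, which proves \eqref{supestL}; in particular $\Psi_k^f(\theta,t,z)<\infty$ forces $\Lambda_\alpha^f(\theta,t,z)\in L^2(\Omega)$.

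For \eqref{intestL} I would first note, by Fourier inversion in the $z$-variables, that $\int_{(\R^d)^m}\Lambda_\alpha^f(\theta,t,z)\,dz=\int_{\Delta_{\theta,t}^m}D^\alpha f(s,B_s)\,ds$, so the left-hand side of \eqref{intestL} equals $|E[\int_{\Delta_{\theta,t}^m}D^\alpha f(s,B_s)\,ds]|\le(E[|\int_{\Delta_{\theta,t}^m}D^\alpha f(s,B_s)\,ds|^2])^{1/2}$. Expanding the square once more produces $\Delta_{\theta,t}^m\times\Delta_{\theta,t}^m$, which the shuffle identity collapses to $\sum_{\sigma\in S(m,m)}\int_{\Delta_{\theta,t}^{2m}}E[\prod_p D^{\alpha_{[\sigma(p)]}}f_{[\sigma(p)]}(w_p,B_{w_p})]\,dw$. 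Writing each $D^{\alpha_j}f_j(w,\cdot)$ through its Fourier transform in space and repeating the LND/momentum estimate above, but this time bounding $|\widehat{f_j}(w,\cdot)|\le\|f_j(w,\cdot)\|_{L^1(\R^d)}\le\|f_j\|_{L^1(\R^d;L^\infty([0,T]))}$ so that these norms factor out (each $j$ occurring twice, hence squared), one is left with a purely temporal integral $\prod_{l=1}^d\sqrt{(2|\alpha^{(l)}|)!}\sum_{\sigma}\int_{\Delta_{\theta,t}^{2m}}\prod_p|w_p-w_{p-1}|^{-H(d+2|\alpha_{[\sigma(p)]}|)}dw$; since $\Delta_{\theta,t}^{2m}\subseteq\Delta_{0,t}^{2m}$ and the weight $\varkappa_\sigma$ dominates the constant profile, this is bounded by $\Psi_k^\varkappa(\theta,t)$, and taking square roots gives \eqref{intestL}.

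The step I expect to be the main obstacle is the momentum bookkeeping in the second paragraph: one must verify that after the substitution $\tilde v\mapsto\eta$ and the expansion of $\prod_p|\tilde v_p|^{|\alpha_{[\sigma(p)]}|}$ the time-singularity exponents can indeed be brought to precisely $H(d+2|\alpha_{[\sigma(p)]}|)$ — rather than a larger exponent that would either destroy integrability near the diagonals or change the shape of $\Psi_k^f$ — and that the attendant combinatorics (the number of shuffles, the number of monomials, and the Gaussian-moment constants $C_{d,q}$) stay controlled so that the final constant has the clean form $C^{m+|\alpha|}\prod_l\sqrt{(2|\alpha^{(l)}|)!}$. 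Once the local non-determinism \eqref{SLND} has been invoked, everything else is routine.
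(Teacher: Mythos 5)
Your overall architecture (expand the square, pull out the phases, take the Gaussian expectation, shuffle the product of simplices into $\Delta_{\theta,t}^{2m}$, then let local non-determinism produce the time singularities) coincides with the paper's. The divergence, and the problem, is in how you execute the momentum integral. You invoke the quadratic-form non-determinism \eqref{SLND}, but as stated its constant $C$ depends on the number of time points, here $2m$. With an $m$-dependent constant you cannot conclude with a bound of the form $C^{m+|\alpha|}\Psi_k^f$ for a universal $C$, and this form is essential (the theorem is applied with $m\to\infty$ and the resulting series must converge). The paper circumvents this by using the conditional-variance form of strong local non-determinism \eqref{eq:SLND}, whose constant $K$ depends only on $H$ and $T$, fed through the determinant factorization of Lemma \ref{lem:detCov}, the Gaussian product-moment (permanent) inequality of Lemma \ref{LiWei}, and the conditional-variance computation of Lemma \ref{CD} after the whitening substitution $u\mapsto Q^{-1/2}u$. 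Your Abel-summation route could be repaired, but only by first establishing \eqref{SLND} with explicit (at worst geometric in $m$) dependence, which is precisely the content the paper extracts from \eqref{eq:SLND}.

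Second, your exponent bookkeeping does not land on $\Psi_k^f$. Writing $\tilde v_p=\eta_p-\eta_{p+1}$ and expanding $\prod_p|\tilde v_p|^{k_p}$ (with $k_p:=|\alpha_{[\sigma(p)]}|$) into monomials $\prod_p|\eta_p|^{q_p}$ redistributes the degrees between adjacent positions: $\sum_p q_p=\sum_p k_p=2|\alpha|$, but an individual $q_p$ can be as large as $k_p+k_{p\pm1}$. The Gaussian integrals then give $\prod_p|w_p-w_{p-1}|^{-H(d+q_p)}$, whereas $\Psi_k^f$ carries $\prod_p|w_p-w_{p-1}|^{-H(d+2k_p)}$; when the $k_p$ are unequal (e.g.\ $k_p=0$ next to $k_{p+1}>0$) your term is \emph{more} singular at position $p$ than the corresponding factor of $\Psi_k^f$, so your bound is not dominated by the claimed one and \eqref{supestL} is not established in the stated generality (it would be harmless in the paper's application where all $k_p=1$). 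Finally, in your argument for \eqref{intestL} you discard the weights $\varkappa_j$ and then claim the weight-free temporal integral is bounded by $\Psi_k^{\varkappa}$ because "$\varkappa_\sigma$ dominates the constant profile"; this is unjustified and false for small $\varkappa$. The correct step is the paper's: apply \eqref{supestL} to the product $\varkappa f$, use $|E[\int\Lambda\,dz]|\le\int(E[|\Lambda|^2])^{1/2}dz$, and only then take the supremum in time of each $f_j$ to extract $\prod_j\|f_j\|_{L^1(\mathbb{R}^d;L^\infty([0,T]))}$, leaving the $\varkappa_j(s_j)$ inside the remaining time integral, which is exactly $(\Psi_k^{\varkappa})^{1/2}$.
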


\begin{proof}
For notational convenience we consider $\theta =0$ and set $\Lambda _{\alpha
}^{f}(t,z)=\Lambda _{\alpha }^{f}(0,t,z).$

For an integrable function $g:(\mathbb{R}^{d})^{m}\longrightarrow \mathbb{C}$
we can write%
\begin{eqnarray*}
&&\left\vert \int_{(\mathbb{R}^{d})^{m}}g(u_{1},...,u_{m})du_{1}...du_{m}%
\right\vert ^{2} \\
&=&\int_{(\mathbb{R}^{d})^{m}}g(u_{1},...,u_{m})du_{1}...du_{m}\int_{(%
\mathbb{R}^{d})^{m}}\overline{g(u_{m+1},...,u_{2m})}du_{m+1}...du_{2m} \\
&=&\int_{(\mathbb{R}^{d})^{m}}g(u_{1},...,u_{m})du_{1}...du_{m}(-1)^{dm}%
\int_{(\mathbb{R}^{d})^{m}}\overline{g(-u_{m+1},...,-u_{2m})}%
du_{m+1}...du_{2m},
\end{eqnarray*}%
where we used the change of variables $(u_{m+1},...,u_{2m})\longmapsto
(-u_{m+1},...,-u_{2m})$ in the third equality.

This gives%
\begin{eqnarray*}
&&\left\vert \Lambda _{\alpha }^{f}(\theta ,t,z)\right\vert ^{2} \\
&=&(2\pi )^{-2dm}(-1)^{dm}\int_{(\mathbb{R}^{d})^{2m}}\int_{\Delta
_{0,t}^{m}}\prod_{j=1}^{m}f_{j}(s_{j},z_{j})(-iu_{j})^{\alpha
_{j}}e^{-i\left\langle u_{j},B_{s_{j}}-z_{j}\right\rangle }ds_{1}...ds_{m} \\
&&\times \int_{\Delta
_{0,t}^{m}}\prod_{j=m+1}^{2m}f_{[j]}(s_{j},z_{[j]})(-iu_{j})^{\alpha
_{\lbrack j]}}e^{-i\left\langle u_{j},B_{s_{j}}-z_{[j]}\right\rangle
}ds_{m+1}...ds_{2m}du_{1}...du_{2m} \\
&=&(2\pi )^{-2dm}(-1)^{dm}\sum_{\sigma \in S(m,m)}\int_{(\mathbb{R}%
^{d})^{2m}}\left( \prod_{j=1}^{m}e^{-i\left\langle
z_{j},u_{j}+u_{j+m}\right\rangle }\right) \\
&&\times \int_{\Delta _{0,t}^{2m}}f_{\sigma
}(s,z)\prod_{j=1}^{2m}u_{\sigma (j)}^{\alpha _{\lbrack \sigma
(j)]}}\exp \left\{ -\sum_{j=1}^{2m}\left\langle u_{\sigma
(j)},B_{s_{j}}\right\rangle \right\} ds_{1}...ds_{2m}du_{1}...du_{2m},
\end{eqnarray*}%
where we used (\ref{shuffleIntegral}) in the last step.

Taking the expectation on both sides yields%
\begin{eqnarray}
&&E[\left\vert \Lambda _{\alpha }^{f}(\theta ,t,z)\right\vert ^{2}]
\label{Lambda} \\
&=&(2\pi )^{-2dm}(-1)^{dm}\sum_{\sigma \in S(m,m)}\int_{(\mathbb{R}%
^{d})^{2m}}\left( \prod_{j=1}^{m}e^{-i\left\langle
z_{j},u_{j}+u_{j+m}\right\rangle }\right)   \notag \\
&&\times \int_{\Delta _{0,t}^{2m}}f_{\sigma
}(s,z)\prod_{j=1}^{2m}u_{\sigma (j)}^{\alpha _{\lbrack \sigma
(j)]}}\exp \left\{ -\frac{1}{2}Var[\sum_{j=1}^{2m}\left\langle u_{\sigma
(j)},B_{s_{j}}\right\rangle ]\right\} ds_{1}...ds_{2m}du_{1}...du_{2m} 
\notag \\
&=&(2\pi )^{-2dm}(-1)^{dm}\sum_{\sigma \in S(m,m)}\int_{(\mathbb{R}%
^{d})^{2m}}\left( \prod_{j=1}^{m}e^{-i\left\langle
z_{j},u_{j}+u_{j+m}\right\rangle }\right)   \notag \\
&&\times \int_{\Delta _{0,t}^{2m}}f_{\sigma
}(s,z)\prod_{j=1}^{2m}u_{\sigma (j)}^{\alpha _{\lbrack \sigma
(j)]}}\exp \left\{ -\frac{1}{2}\sum_{l=1}^{d}Var[\sum_{j=1}^{2m}u_{\sigma
(j)}^{(l)}B_{s_{j}}^{(1)}]\right\}
ds_{1}...ds_{2m}du_{1}^{(1)}...du_{2m}^{(1)}  \notag \\
&&...du_{1}^{(d)}...du_{2m}^{(d)}  \notag \\
&=&(2\pi )^{-2dm}(-1)^{dm}\sum_{\sigma \in S(m,m)}\int_{(\mathbb{R}%
^{d})^{2m}}\left( \prod_{j=1}^{m}e^{-i\left\langle
z_{j},u_{j}+u_{j+m}\right\rangle }\right)   \notag \\
&&\times \int_{\Delta _{0,t}^{2m}}f_{\sigma
}(s,z)\prod_{j=1}^{2m}u_{\sigma (j)}^{\alpha _{\lbrack \sigma
(j)]}}\prod_{l=1}^{d}\exp \left\{ -\frac{1}{2}((u_{\sigma
(j)}^{(l)})_{1\leq j\leq 2m})^{T}Q((u_{\sigma (j)}^{(l)})_{1\leq j\leq
2m})\right\} ds_{1}...ds_{2m}  \notag \\
&&du_{\sigma (1)}^{(1)}...du_{\sigma (2m)}^{(1)}...du_{\sigma
(1)}^{(d)}...du_{\sigma (2m)}^{(d)},  \notag
\end{eqnarray}%
where%
\begin{equation*}
Q=Q(s):=(E[B_{s_{i}}^{(1)}B_{s_{j}}^{(1)}])_{1\leq i,j\leq 2m}.
\end{equation*}%
Further, we see that%
\begin{eqnarray}
&&\int_{\Delta _{0,t}^{2m}}\left\vert f_{\sigma }(s,z)\right\vert \int_{(%
\mathbb{R}^{d})^{2m}}\prod_{j=1}^{2m}\prod_{l=1}^{d}\left%
\vert u_{\sigma (j)}^{(l)}\right\vert ^{\alpha _{\lbrack \sigma
(j)]}^{(l)}}\prod_{l=1}^{d}\exp \left\{ -\frac{1}{2}((u_{\sigma
(j)}^{(l)})_{1\leq j\leq 2m})^{T}Q((u_{\sigma (j)}^{(l)})_{1\leq j\leq
2m})\right\}   \notag \\
&&du_{\sigma (1)}^{(1)}...du_{\sigma (2m)}^{(1)}...du_{\sigma
(1)}^{(d)}...du_{\sigma (2m)}^{(d)}ds_{1}...ds_{2m}  \notag \\
&=&\int_{\Delta _{0,t}^{2m}}\left\vert f_{\sigma }(s,z)\right\vert \int_{(%
\mathbb{R}^{d})^{2m}}\prod_{j=1}^{2m}\prod_{l=1}^{d}\left%
\vert u_{j}^{(l)}\right\vert ^{\alpha _{\lbrack \sigma (j)]}^{(l)}}  \notag
\\
&&\times \prod_{l=1}^{d}\exp \left\{ -\frac{1}{2}\left\langle
Qu^{(l)},u^{(l)}\right\rangle \right\}   \notag \\
&&du_{1}^{(1)}...du_{2m}^{(1)}...du_{1}^{(d)}...du_{2m}^{(d)}ds_{1}...ds_{2m}
\notag \\
&=&\int_{\Delta _{0,t}^{2m}}\left\vert f_{\sigma }(s,z)\right\vert
\prod_{l=1}^{d}\int_{\mathbb{R}^{2m}}(\prod_{j=1}^{2m}\left%
\vert u_{j}^{(l)}\right\vert ^{\alpha _{\lbrack \sigma (j)]}^{(l)}})\exp
\left\{ -\frac{1}{2}\left\langle Qu^{(l)},u^{(l)}\right\rangle \right\}
du_{1}^{(l)}...du_{2m}^{(l)}ds_{1}...ds_{2m},  \label{Lambda2}
\end{eqnarray}%
where%
\begin{equation*}
u^{(l)}:=(u_{j}^{(l)})_{1\leq j\leq 2m}.
\end{equation*}%
We have that%
\begin{eqnarray*}
&&\int_{\mathbb{R}^{2m}}(\prod_{j=1}^{2m}\left\vert
u_{j}^{(l)}\right\vert ^{\alpha _{\lbrack \sigma (j)]}^{(l)}})\exp \left\{ -%
\frac{1}{2}\left\langle Qu^{(l)},u^{(l)}\right\rangle \right\}
du_{1}^{(l)}...du_{2m}^{(l)} \\
&=&\frac{1}{(\det Q)^{1/2}}\int_{\mathbb{R}^{2m}}(\prod_{j=1}^{2m}%
\left\vert \left\langle Q^{-1/2}u^{(l)},e_{j}\right\rangle \right\vert
^{\alpha _{\lbrack \sigma (j)]}^{(l)}})\exp \left\{ -\frac{1}{2}\left\langle
u^{(l)},u^{(l)}\right\rangle \right\} du_{1}^{(l)}...du_{2m}^{(l)},
\end{eqnarray*}%
where $e_{i},i=1,...,2m$ is the standard ONB of $\mathbb{R}^{2m}$. 

We also get that%
\begin{eqnarray*}
&&\int_{\mathbb{R}^{2m}}(\prod_{j=1}^{2m}\left\vert \left\langle
Q^{-1/2}u^{(l)},e_{j}\right\rangle \right\vert ^{\alpha _{\lbrack \sigma
(j)]}^{(l)}})\exp \left\{ -\frac{1}{2}\left\langle
u^{(l)},u^{(l)}\right\rangle \right\} du_{1}^{(l)}...du_{2m}^{(l)} \\
&=&(2\pi )^{m}E[\prod_{j=1}^{2m}\left\vert \left\langle
Q^{-1/2}Z,e_{j}\right\rangle \right\vert ^{\alpha _{\lbrack \sigma
(j)]}^{(l)}}],
\end{eqnarray*}%
where%
\begin{equation*}
Z\sim \mathcal{N}(\mathcal{O},I_{2m\times 2m}).
\end{equation*}%
We know from Lemma \ref{LiWei}, which is a type of Brascamp-Lieb
inequality that%
\begin{eqnarray*}
&&E[\prod_{j=1}^{2m}\left\vert \left\langle
Q^{-1/2}Z,e_{j}\right\rangle \right\vert ^{\alpha _{\lbrack \sigma
(j)]}^{(l)}}] \\
&\leq &\sqrt{perm(\sum )}=\sqrt{\sum_{\pi \in S_{2\left\vert \alpha
^{(l)}\right\vert }}\prod_{i=1}^{2\left\vert \alpha
^{(l)}\right\vert }a_{i\pi (i)}},
\end{eqnarray*}%
where $perm(\sum )$ is the permanent of the covariance matrix $\sum =(a_{ij})
$ of the Gaussian random vector%
\begin{equation*}
\underset{\alpha _{\lbrack \sigma (1)]}^{(1)}\text{ times}}{\underbrace{%
(\left\langle Q^{-1/2}Z,e_{1}\right\rangle ,...,\left\langle
Q^{-1/2}Z,e_{1}\right\rangle }},\underset{\alpha _{\lbrack \sigma (2)]}^{(1)}%
\text{ times}}{\underbrace{\left\langle Q^{-1/2}Z,e_{2}\right\rangle
,...,\left\langle Q^{-1/2}Z,e_{2}\right\rangle }},...,\underset{\alpha
_{\lbrack \sigma (2m)]}^{(1)}\text{ times}}{\underbrace{\left\langle
Q^{-1/2}Z,e_{2m}\right\rangle ,...,\left\langle
Q^{-1/2}Z,e_{2m}\right\rangle }}),
\end{equation*}%
$\left\vert \alpha ^{(l)}\right\vert :=\sum_{j=1}^{m}\alpha _{j}^{(l)}$ and
where $S_{n}$ stands for the permutation group of size $n$.

In addition, using an upper bound for the permanent of positive semidefinite
matrices (see \cite{AG}) or
direct computations we get that%
\begin{equation}
perm(\sum )=\sum_{\pi \in S_{2\left\vert \alpha ^{(l)}\right\vert
}}\prod_{i=1}^{2\left\vert \alpha ^{(l)}\right\vert }a_{i\pi
(i)}\leq (2\left\vert \alpha ^{(l)}\right\vert
)!\prod_{i=1}^{2\left\vert \alpha ^{(l)}\right\vert }a_{ii}.
\label{PSD}
\end{equation}

Let now $i\in \lbrack \sum_{r=1}^{j-1}\alpha _{\lbrack \sigma
(r)]}^{(1)}+1,\alpha _{\lbrack \sigma (j)]}^{(1)}]$ for some arbitrary fixed 
$j\in \{1,...,2m\}$. Then%
\begin{equation*}
a_{ii}=E[\left\langle Q^{-1/2}Z,e_{j}\right\rangle \left\langle
Q^{-1/2}Z,e_{j}\right\rangle ].
\end{equation*}

\bigskip Further using substitution, we also have that%
\begin{eqnarray*}
&&E[\left\langle Q^{-1/2}Z,e_{j}\right\rangle \left\langle
Q^{-1/2}Z,e_{j}\right\rangle ] \\
&=&(\det Q)^{1/2}\frac{1}{(2\pi )^{m}}\int_{\mathbb{R}^{2m}}\left\langle
u,e_{j}\right\rangle ^{2}\exp (-\frac{1}{2}\left\langle Qu,u\right\rangle
)du_{1}...du_{2m} \\
&=&(\det Q)^{1/2}\frac{1}{(2\pi )^{m}}\int_{\mathbb{R}^{2m}}u_{j}^{2}\exp (-%
\frac{1}{2}\left\langle Qu,u\right\rangle )du_{1}...du_{2m}
\end{eqnarray*}

\bigskip

We now want to use Lemma \ref{CD}.

\bigskip Then we get that%
\begin{eqnarray*}
&&\int_{\mathbb{R}^{2m}}u_{j}^{2}\exp (-\frac{1}{2}\left\langle
Qu,u\right\rangle )du_{1}...du_{m} \\
&=&\frac{(2\pi )^{(2m-1)/2}}{(\det Q)^{1/2}}\int_{\mathbb{R}}v^{2}\exp (-%
\frac{1}{2}v^{2})dv\frac{1}{\sigma _{j}^{2}} \\
&=&\frac{(2\pi )^{m}}{(\det Q)^{1/2}}\frac{1}{\sigma _{j}^{2}},
\end{eqnarray*}%
where $\sigma _{j}^{2}:=Var[B_{s_{j}}^{H}\left\vert
B_{s_{1}}^{H},...,B_{s_{2m}}^{H}\text{ without }B_{s_{j}}^{H}\right] .$

We now want to use strong local non-determinism of the form (see (\ref{eq:SLND})): For all $t\in
\lbrack 0,T],$ $0<r<t:$%
\begin{equation*}
Var[B_{t}^{H}\left\vert B_{s}^{H},\left\vert t-s\right\vert \geq r\right]
\geq Kr^{2H}.
\end{equation*}%
The latter implies that 
\begin{equation*}
(\det Q(s))^{1/2}\geq K^{(2m-1)/2}\left\vert s_{1}\right\vert ^{H}\left\vert
s_{2}-s_{1}\right\vert ^{H}...\left\vert s_{2m}-s_{2m-1}\right\vert ^{H}
\end{equation*}%
as well as%
\begin{equation*}
\sigma _{j}^{2}\geq K\min \{\left\vert s_{j}-s_{j-1}\right\vert
^{2H},\left\vert s_{j+1}-s_{j}\right\vert ^{2H}\}.
\end{equation*}%
Thus%
\begin{eqnarray*}
\prod_{j=1}^{2m}\sigma _{l}^{-2\alpha _{\lbrack \sigma (j)]}^{(1)}}
&\leq &K^{-2m}\prod_{j=1}^{2m}\frac{1}{\min \{\left\vert
s_{j}-s_{j-1}\right\vert ^{2H\alpha _{\lbrack \sigma (j)]}^{(1)}},\left\vert
s_{j+1}-s_{j}\right\vert ^{2H\alpha _{\lbrack \sigma (j)]}^{(1)}}\}} \\
&\leq &C^{m+\left\vert \alpha ^{(l)}\right\vert}\prod_{j=1}^{2m}\frac{1}{\left\vert
s_{j}-s_{j-1}\right\vert ^{4H\alpha _{\lbrack \sigma (j)]}^{(1)}}}
\end{eqnarray*}%
for a constant $C$ only depending on $H$ and $T$.

\bigskip Hence, it follows from (\ref{PSD}) that%
\begin{eqnarray*}
perm(\sum ) &\leq &(2\left\vert \alpha ^{(l)}\right\vert
)!\prod_{i=1}^{2\left\vert \alpha ^{(l)}\right\vert }a_{ii} \\
&\leq &(2\left\vert \alpha ^{(l)}\right\vert
)!\prod_{j=1}^{2m}((\det Q)^{1/2}\frac{1}{(2\pi )^{m}}\frac{(2\pi
)^{m}}{(\det Q)^{1/2}}\frac{1}{\sigma _{j}^{2}})^{\alpha _{\lbrack \sigma
(j)]}^{(1)}} \\
&\leq &(2\left\vert \alpha ^{(l)}\right\vert )!C^{m+\left\vert \alpha ^{(l)}\right\vert}\prod_{j=1}^{2m}%
\frac{1}{\left\vert s_{j}-s_{j-1}\right\vert ^{4H\alpha _{\lbrack \sigma
(j)]}^{(1)}}}.
\end{eqnarray*}%
So%
\begin{eqnarray*}
&&E[\prod_{j=1}^{2m}\left\vert \left\langle
Q^{-1/2}Z,e_{j}\right\rangle \right\vert ^{\alpha _{\lbrack \sigma
(j)]}^{(l)}}]\leq \sqrt{perm(\sum )} \\
&\leq &\sqrt{(2\left\vert \alpha ^{(l)}\right\vert )!}C^{m+\left\vert \alpha ^{(l)}\right\vert}\prod_{j=1}^{2m}\frac{1}{\left\vert s_{j}-s_{j-1}\right\vert ^{2H\alpha
_{\lbrack \sigma (j)]}^{(1)}}}.
\end{eqnarray*}%
Therefore we obtain from (\ref{Lambda}) and (\ref{Lambda2}) that%
\begin{eqnarray*}
&&E[\left\vert \Lambda _{\alpha }^{f}(\theta ,t,z)\right\vert ^{2}] \\
&\leq &C^{m}\int_{\Delta _{0,t}^{2m}}\left\vert f_{\sigma }(s,z)\right\vert
\prod_{l=1}^{d}\int_{\mathbb{R}^{2m}}(\prod_{j=1}^{2m}\left%
\vert u_{j}^{(l)}\right\vert ^{\alpha _{\lbrack \sigma (j)]}^{(l)}})\exp
\left\{ -\frac{1}{2}\left\langle Qu^{(l)},u^{(l)}\right\rangle \right\}
du_{1}^{(l)}...du_{2m}^{(l)}ds_{1}...ds_{2m} \\
&\leq &M^{m}\int_{\Delta _{0,t}^{2m}}\left\vert f_{\sigma }(s,z)\right\vert 
\frac{1}{(\det Q(s))^{d/2}}\prod_{l=1}^{d}\sqrt{(2\left\vert \alpha
^{(l)}\right\vert )!}C^{m+\left\vert \alpha ^{(l)}\right\vert}\prod_{j=1}^{2m}\frac{1}{\left\vert
s_{j}-s_{j-1}\right\vert ^{2H\alpha _{\lbrack \sigma (j)]}^{(1)}}}%
ds_{1}...ds_{2m} \\
&=&M^{m}C^{md+\left\vert \alpha\right\vert}\prod_{l=1}^{d}\sqrt{(2\left\vert \alpha
^{(l)}\right\vert )!}\int_{\Delta _{0,t}^{2m}}\left\vert f_{\sigma
}(s,z)\right\vert \prod_{j=1}^{2m}\frac{1}{\left\vert
s_{j}-s_{j-1}\right\vert ^{H(d+2\sum_{l=1}^{d}\alpha _{\lbrack \sigma
(j)]}^{(1)})}}ds_{1}...ds_{2m}
\end{eqnarray*}%
for a constant $M$ depending on $d$.

\bigskip Finally, we show estimate (\ref{intestL}). Using the inequality (\ref{supestL}), we find that%
\begin{eqnarray*}
&&\left\vert E\left[ \int_{(\mathbb{R}^{d})^{m}}\Lambda _{\alpha
}^{\varkappa f}(\theta ,t,z)dz\right] \right\vert \\
&\leq &\int_{(\mathbb{R}^{d})^{m}}(E[\left\vert \Lambda _{\alpha
}^{\varkappa f}(\theta ,t,z)\right\vert ^{2})^{1/2}dz\leq C^{m/2+\left\vert \alpha\right\vert/2}\int_{(%
\mathbb{R}^{d})^{m}}(\Psi _{k}^{\varkappa f}(\theta ,t,z))^{1/2}dz.
\end{eqnarray*}%
Taking the supremum over $[0,T]$ for each function $f_{j}$, i.e.%
\begin{equation*}
\left\vert f_{[\sigma (j)]}(s_{j},z_{[\sigma (j)]})\right\vert \leq
\sup_{s_{j}\in \lbrack 0,T]}\left\vert f_{[\sigma (j)]}(s_{j},z_{[\sigma
(j)]})\right\vert ,j=1,...,2m
\end{equation*}%
one obtains that%
\begin{eqnarray*}
&&\left\vert E\left[ \int_{(\mathbb{R}^{d})^{m}}\Lambda _{\alpha
}^{\varkappa f}(\theta ,t,z)dz\right] \right\vert \\
&\leq &C^{m+\left\vert \alpha\right\vert}\max_{\sigma \in S(m,m)}\int_{(\mathbb{R}^{d})^{m}}\left(
\prod_{l=1}^{2m}\left\Vert f_{[\sigma (l)]}(\cdot ,z_{[\sigma
(l)]})\right\Vert _{L^{\infty }([0,T])}\right) ^{1/2}dz \\
&&\times (\prod_{l=1}^{d}\sqrt{(2\left\vert \alpha ^{(l)}\right\vert
)!}\sum_{\sigma \in S(m,m)}\int_{\Delta _{0,t}^{2m}}\left\vert \varkappa
_{\sigma }(s)\right\vert \prod_{j=1}^{2m}\frac{1}{\left\vert
s_{j}-s_{j-1}\right\vert ^{H(d+2\sum_{l=1}^{d}\alpha _{\lbrack \sigma
(j)]}^{(1)})}}ds_{1}...ds_{2m})^{1/2} \\
&=&C^{m+\left\vert \alpha\right\vert}\max_{\sigma \in S(m,m)}\int_{(\mathbb{R}^{d})^{m}}\left(
\prod_{l=1}^{2m}\left\Vert f_{[\sigma (l)]}(\cdot ,z_{[\sigma
(l)]})\right\Vert _{L^{\infty }([0,T])}\right) ^{1/2}dz\cdot (\Psi
_{k}^{\varkappa }(\theta ,t))^{1/2} \\
&=&C^{m+\left\vert \alpha\right\vert}\int_{(\mathbb{R}^{d})^{m}}\prod_{j=1}^{m}\left\Vert
f_{j}(\cdot ,z_{j})\right\Vert _{L^{\infty }([0,T])}dz\cdot (\Psi
_{k}^{\varkappa }(\theta ,t))^{1/2} \\
&=&C^{m+\left\vert \alpha\right\vert}\prod_{j=1}^{m}\left\Vert f_{j}(\cdot ,z_{j})\right\Vert
_{L^{1}(\mathbb{R}^{d};L^{\infty }([0,T]))}\cdot (\Psi _{k}^{\varkappa
}(\theta ,t))^{1/2}.
\end{eqnarray*}

\end{proof}

The next result is a key estimate which shows why fractional Brownian motion regularises \eqref{SDEfBm}. It rests in fact on the earlier integration by parts formula. This estimate is given in more explicit terms when the function $\varkappa$ is chosen to be
\begin{align*}
\varkappa_j(s) = (K_H(s,\theta)-K_H(s,\theta'))^{\varepsilon_j}, \quad \theta < s < t
\end{align*}
and,
\begin{align*}
\varkappa_j(s) = (K_H(s,\theta))^{\varepsilon_j}, \quad \theta < s < t
\end{align*}
for every $j=1,\dots,m$ with $(\varepsilon_1,\dots, \varepsilon_{m})\in \{0,1\}^{m}$. It will be made clear why these choices are important in the forthcoming section.

\begin{prop}\label{mainestimate1}
Let $B^{H},H\in (0,1/2)$ be a standard $d-$dimensional fractional Brownian
motion and functions $f$ and $\varkappa $ as in (\ref{f}), respectively
as in (\ref{kappa}). Let $\theta ,\theta \prime ,t\in \lbrack 0,T],\theta
\prime <\theta <t$ and%
\begin{equation*}
\varkappa _{j}(s)=(K_{H}(s,\theta )-K_{H}(s,\theta \prime ))^{\varepsilon
_{j}},\theta <s<t
\end{equation*}
for every $j=1,...,m$ with $(\varepsilon _{1},...,\varepsilon _{m})\in
\{0,1\}^{m}$ for $\theta ,\theta \prime \in \lbrack 0,T]$ with $\theta
\prime <\theta .$ Let $\alpha \in (\mathbb{N}_{0}^{d})^{m}$ be a
multi-index. If 
\begin{align*}\label{3.7}
H<\frac{\frac{1}{2}-\gamma }{(d+2\sum_{l=1}^{d}\alpha _{
j}^{(1)})}
\end{align*}
for all $j$, where $\gamma \in (0,H)$ is sufficiently small, then there exists a universal constant $C$ (depending on $H$, $T$
and $d$, but independent of $m$, $\{f_{i}\}_{i=1,...,m}$ and $\alpha $) such
that for any $\theta ,t\in \lbrack 0,T]$ with $\theta <t$ we have%
\begin{eqnarray*}
&&\left\vert E\int_{\Delta _{\theta ,t}^{m}}\left(
\prod_{j=1}^{m}D^{\alpha _{j}}f_{j}(s_{j},B_{s_{j}}^{H})\varkappa
_{j}(s_{j})\right) ds\right\vert  \\
&\leq &C^{m+\left\vert \alpha\right\vert}\prod_{j=1}^{m}\left\Vert f_{j}(\cdot ,z_{j})\right\Vert
_{L^{1}(\mathbb{R}^{d};L^{\infty }([0,T]))}\left( \frac{\theta -\theta
\prime }{\theta \theta \prime }\right) ^{\gamma \sum_{j=1}^{m}\varepsilon
_{j}}\theta ^{(H-\frac{1}{2}-\gamma )\sum_{j=1}^{m}\varepsilon _{j}} \\
&&\times \frac{(\prod_{l=1}^{d}(2\left\vert \alpha ^{(l)}\right\vert
)!)^{1/4}(t-\theta )^{-H(md+2\left\vert \alpha \right\vert )-(H-\frac{1}{2}%
-\gamma )\sum_{j=1}^{m}\varepsilon _{j}+m}}{\Gamma (-H(2md+4\left\vert
\alpha \right\vert )+2(H-\frac{1}{2}-\gamma )\sum_{j=1}^{m}\varepsilon
_{j}+2m)^{1/2}}.
\end{eqnarray*}%

\end{prop}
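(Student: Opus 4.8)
The strategy is to use Theorem~\ref{mainthmlocaltime} to replace the stochastic integral by the deterministic quantity $\Psi_k^\varkappa(\theta,t)$, and then to estimate $\Psi_k^\varkappa(\theta,t)$ by combining a H\"older-type bound for the kernel increment $K_H(\cdot,\theta)-K_H(\cdot,\theta')$ with an iterated Beta-integral over the $2m$-simplex. First I would note that the integration by parts formula \eqref{ibp}--\eqref{LambdaDef} remains valid when the deterministic weight $\varkappa$ is inserted into the integrand: defining $\Lambda_\alpha^{\varkappa f}(\theta,t,z)$ by placing $\prod_{j=1}^m\varkappa_j(s_j)$ inside the $ds$-integral in \eqref{LambdaDef}, the same Fourier and shuffle steps give $\int_{\Delta_{\theta,t}^m}D^\alpha f(s,B_s^H)\varkappa(s)\,ds=\int_{(\R^d)^m}\Lambda_\alpha^{\varkappa f}(\theta,t,z)\,dz$, and running the proof of Theorem~\ref{mainthmlocaltime} with left endpoint $\theta$ in place of $0$ (nothing else changes, since the local non-determinism bound holds on every window) yields, via \eqref{intestL},
\begin{equation*}
\Bigl|E\!\int_{\Delta_{\theta,t}^m}\Bigl(\prod_{j=1}^m D^{\alpha_j}f_j(s_j,B_{s_j}^H)\varkappa_j(s_j)\Bigr)ds\Bigr|\le C^{m+|\alpha|}\prod_{j=1}^m\|f_j(\cdot,z_j)\|_{L^1(\R^d;L^\infty([0,T]))}\bigl(\Psi_k^\varkappa(\theta,t)\bigr)^{1/2},
\end{equation*}
with $\Psi_k^\varkappa(\theta,t)=\prod_{l=1}^d\sqrt{(2|\alpha^{(l)}|)!}\,\sum_{\sigma\in S(m,m)}\int_{\Delta_{\theta,t}^{2m}}|\varkappa_\sigma(s)|\prod_{j=1}^{2m}|s_j-s_{j-1}|^{-H(d+2\sum_l\alpha_{[\sigma(j)]}^{(l)})}\,ds$. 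Hence it suffices to estimate $\Psi_k^\varkappa(\theta,t)$.

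The core analytic input is a bound for the kernel increment: from the explicit representation of $K_H$ in Section~\ref{fraccal}, the scaling $K_H(\lambda s,\lambda r)=\lambda^{H-1/2}K_H(s,r)$ and elementary inequalities ($1-(1-x)^p\le px$ for $p\in(0,1)$, $\min(1,y)\le y^\gamma$), one obtains, for $\gamma\in(0,H)$ small enough, a constant $C=C(H,T,\gamma)$ with
\begin{equation*}
|K_H(s,\theta)-K_H(s,\theta')|\le C\Bigl(\tfrac{\theta-\theta'}{\theta\theta'}\Bigr)^\gamma\theta^{H-\frac12-\gamma}\,\omega(s-\theta),\qquad 0<\theta'<\theta<s\le T,
\end{equation*}
where $\omega$ is a suitable integrable power of its argument, coming from the singular summand $(s/r)^{H-1/2}(s-r)^{H-1/2}$ of $K_H$, while the factor $\bigl(\tfrac{\theta-\theta'}{\theta\theta'}\bigr)^\gamma\theta^{H-\frac12-\gamma}$ is the $\gamma$-H\"older gain extracted from the second variable. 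Substituting the $\varepsilon_{[\sigma(j)]}$-th power of this bound for $|\varkappa_{[\sigma(j)]}(s_j)|$ and using $\sum_{j=1}^{2m}\varepsilon_{[\sigma(j)]}=2\sum_{i=1}^m\varepsilon_i$, one arrives at
\begin{equation*}
\Psi_k^\varkappa(\theta,t)\le \prod_{l=1}^d\sqrt{(2|\alpha^{(l)}|)!}\;\Bigl[C\bigl(\tfrac{\theta-\theta'}{\theta\theta'}\bigr)^\gamma\theta^{H-\frac12-\gamma}\Bigr]^{2\sum_i\varepsilon_i}\sum_{\sigma\in S(m,m)}\int_{\Delta_{\theta,t}^{2m}}\prod_{j=1}^{2m}\omega(s_j-\theta)^{\varepsilon_{[\sigma(j)]}}\,|s_j-s_{j-1}|^{-\beta_j^\sigma}\,ds,
\end{equation*}
with $\beta_j^\sigma=H(d+2\sum_l\alpha_{[\sigma(j)]}^{(l)})$.

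It then remains to evaluate the simplex integral. I would integrate $s_{2m},s_{2m-1},\dots,s_1$ successively, using $\int_0^a u^{p}(a-u)^{q}du=a^{p+q+1}B(p+1,q+1)$ at each step: when $s_j$ is integrated, the weight $\omega(s_j-\theta)^{\varepsilon_{[\sigma(j)]}}$ together with the power of $(s_j-\theta)$ accumulated from the already-integrated inner variables combines with the gap $|s_j-s_{j-1}|^{-\beta_j^\sigma}$ into a Beta integral in $s_j$, the last integration over $s_1\in(\theta,t)$ producing the power of $t-\theta$. The hypothesis $H(d+2\sum_l\alpha_j^{(l)})<\tfrac12-\gamma$ gives $\beta_j^\sigma<\tfrac12-\gamma<1$, and together with the fact that the accumulated exponent stays above $-1$ throughout (it equals the integrable exponent of $\omega$ at the innermost step and only increases at each subsequent step) this makes every Beta integral well defined. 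Estimating $\#S(m,m)=\binom{2m}{m}\le4^m$, absorbing into $C^{m+|\alpha|}$ all the Gamma factors whose arguments lie in a fixed bounded interval (the $\Gamma(1-\beta_j^\sigma)$ and the $\Gamma$'s coming from $\omega$), and keeping the dominant Gamma in the denominator (using $\Gamma(x+1)\ge\Gamma(x)$ for $x\ge1$ to absorb the spurious $+1$ produced by the final integration), the bound for $\int_{\Delta_{\theta,t}^{2m}}(\cdots)$ acquires precisely the power of $t-\theta$ and the $\Gamma$-factor appearing in the statement. Taking square roots and collecting the factors $\bigl(\tfrac{\theta-\theta'}{\theta\theta'}\bigr)^{\gamma\sum_i\varepsilon_i}$, $\theta^{(H-\frac12-\gamma)\sum_i\varepsilon_i}$ and $(\prod_l(2|\alpha^{(l)}|)!)^{1/4}$ gives the assertion.

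I expect the two delicate points to be: (i) the kernel estimate — the singular summand of $K_H$ forces $\omega(s-\theta)$ to be a genuinely negative power of $s-\theta$ near $\theta$, so the whole $\gamma$-H\"older gain must be squeezed out of the second variable $\theta,\theta'$, and both summands of $K_H$ have to be interpolated carefully to produce the exact exponents; (ii) the bookkeeping of the iterated Beta integral — since the weights $\omega(s_j-\theta)^{\cdot}$ are not consecutive increments, the order of integration matters, one must check integrability at each of the $2m$ steps (which is exactly where the hypothesis on $H$ enters), and one must extract the precise $\Gamma$-denominator, whose exact form is what will make the series in Section~5 summable as $n\to\infty$. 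The remaining manipulations are routine.
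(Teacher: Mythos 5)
Your argument is correct and follows essentially the same route as the paper: reduce the expectation to $C^{m+|\alpha|}\prod_j\|f_j\|_{L^1(\mathbb{R}^d;L^\infty([0,T]))}(\Psi_k^{\varkappa}(\theta,t))^{1/2}$ via Theorem \ref{mainthmlocaltime}, then bound $\Psi_k^{\varkappa}$ by combining the H\"older estimate on $K_H(\cdot,\theta)-K_H(\cdot,\theta')$ with an iterated Beta-integral over the $2m$-simplex, checking integrability from the hypothesis on $H$ and using $\sum_{j=1}^{2m}\varepsilon_{[\sigma(j)]}=2\sum_{j=1}^m\varepsilon_j$ and $\#S(m,m)\le 4^m$. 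The only difference is that the paper packages your kernel bound and the iterated integration into the quoted Lemma \ref{VI_iterativeInt} (Lemma A.5 of \cite{BNP.16}), which you instead sketch a direct proof of.
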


\begin{proof}
By definition of $\Lambda _{\alpha }^{\varkappa f}$ (\ref{LambdaDef}) it
immediately follows that the integral in our proposition can be expressed as%
\begin{equation*}
\int_{\Delta _{\theta ,t}^{m}}\left( \prod_{j=1}^{m}D^{\alpha
_{j}}f_{j}(s_{j},B_{s_{j}}^{H})\varkappa _{j}(s_{j})\right) ds=\int_{\mathbb{%
R}^{dm}}\Lambda _{\alpha }^{\varkappa f}(\theta ,t,z)dz.
\end{equation*}%
Taking expectation and using Theorem \ref{mainthmlocaltime} we obtain%
\begin{equation*}
\left\vert E\int_{\Delta _{\theta ,t}^{m}}\left(
\prod_{j=1}^{m}D^{\alpha _{j}}f_{j}(s_{j},B_{s_{j}}^{H})\varkappa
_{j}(s_{j})\right) ds\right\vert \leq C^{m+\left\vert \alpha \right\vert}\prod_{j=1}^{m}\left\Vert
f_{j}(\cdot ,z_{j})\right\Vert _{L^{1}(\mathbb{R}^{d};L^{\infty
}([0,T]))}\cdot (\Psi _{k}^{\varkappa }(\theta ,t))^{1/2},
\end{equation*}%
where in this situation 
\begin{eqnarray*}
&&\Psi _{k}^{\varkappa }(\theta ,t) \\
&:&=\prod_{l=1}^{d}\sqrt{(2\left\vert \alpha ^{(l)}\right\vert )!}%
\sum_{\sigma \in S(m,m)}\int_{\Delta
_{0,t}^{2m}}\prod_{j=1}^{2m}(K_{H}(s_{j},\theta )-K_{H}(s_{j},\theta
\prime ))^{\varepsilon _{\lbrack \sigma (j)]}} \\
&&\frac{1}{\left\vert s_{j}-s_{j-1}\right\vert ^{H(d+2\sum_{l=1}^{d}\alpha
_{\lbrack \sigma (j)]}^{(1)})}}ds_{1}...ds_{2m}.
\end{eqnarray*}%
We want to apply Lemma \ref{VI_iterativeInt}. For this, we need that $%
-H(d+2\sum_{l=1}^{d}\alpha _{\lbrack \sigma (j)]}^{(1)})+(H-\frac{1}{2}%
-\gamma )\varepsilon _{\lbrack \sigma (j)]}>-1$ for all $j=1,...,2m.$ The
worst case is, when $\varepsilon _{\lbrack \sigma (j)]}=1$ for all $j$. So $%
H<\frac{\frac{1}{2}-\gamma }{(d-1+2\sum_{l=1}^{d}\alpha _{\lbrack \sigma
(j)]}^{(1)})}$ for all $j$.
Hence, we have%
\begin{eqnarray*}
\Psi _{k}^{\varkappa }(\theta ,t) &\leq &\sum_{\sigma \in S(m,m)}\left( 
\frac{\theta -\theta \prime }{\theta \theta \prime }\right) ^{\gamma
\sum_{j=1}^{2m}\varepsilon _{\lbrack \sigma (j)]}}\theta ^{(H-\frac{1}{2}%
-\gamma )\sum_{j=1}^{2m}\varepsilon _{\lbrack \sigma (j)]}} \\
&&\times \prod_{l=1}^{d}\sqrt{(2\left\vert \alpha ^{(l)}\right\vert
)!}\Pi _{\gamma }(2m)(t-\theta )^{-H(2md+4\left\vert \alpha \right\vert )+(H-%
\frac{1}{2}-\gamma )\sum_{j=1}^{2m}\varepsilon _{\lbrack \sigma (j)]}+2m},
\end{eqnarray*}%
where $\Pi _{\gamma }(m)$ is defined as in Lemma \ref{VI_iterativeInt}. The latter can be bounded above as follows
\begin{equation*}
\Pi _{\gamma }(2m)\leq \frac{\prod_{j=1}^{2m}\Gamma
(1-H(d+2\sum_{l=1}^{d}\alpha _{\lbrack \sigma (j)]}^{(1)}))}{\Gamma
(-H(2md+4\left\vert \alpha \right\vert )+(H-\frac{1}{2}-\gamma
)\sum_{j=1}^{2m}\varepsilon _{\lbrack \sigma (j)]}+2m)}.
\end{equation*}%
Observe that $\sum_{j=1}^{2m}\varepsilon _{\lbrack \sigma
(j)]}=2\sum_{j=1}^{m}\varepsilon _{j}.$ Therefore, we have that%
\begin{eqnarray*}
&&(\Psi _{k}^{\varkappa }(\theta ,t))^{1/2} \\
&\leq &C^{m}\left( \frac{\theta -\theta \prime }{\theta \theta \prime }%
\right) ^{\gamma \sum_{j=1}^{m}\varepsilon _{j}}\theta ^{(H-\frac{1}{2}%
-\gamma )\sum_{j=1}^{m}\varepsilon _{j}} \\
&&\times \frac{(\prod_{l=1}^{d}(2\left\vert \alpha ^{(l)}\right\vert
)!)^{1/4}(t-\theta )^{-H(md+2\left\vert \alpha \right\vert )-(H-\frac{1}{2}%
-\gamma )\sum_{j=1}^{m}\varepsilon _{j}+m}}{\Gamma (-H(2md+4\left\vert
\alpha \right\vert )+2(H-\frac{1}{2}-\gamma )\sum_{j=1}^{m}\varepsilon
_{j}+2m)^{1/2}},
\end{eqnarray*}%
where we used $\prod_{j=1}^{2m}\Gamma (1-H(d+2\sum_{l=1}^{d}\alpha _{\lbrack \sigma (j)]}^{(1)})\leq C^{m}$ for a large
enough constant $C>0$ and $\sqrt{a_{1}+...+a_{m}}\leq \sqrt{a_{1}}+...\sqrt{%
a_{m}}$ for arbitrary non-negative numbers $a_{1},...,a_{m}$.

\end{proof}

\begin{prop}\label{mainestimate2}
Let $B^{H},H\in (0,1/2)$ be a standard $d-$dimensional fractional Brownian
motion and functions $f$ and $\varkappa $ as in (\ref{f}), respectively
as in (\ref{kappa}). Let $\theta ,t\in \lbrack 0,T]$ with $\theta <t$ and%
\begin{equation*}
\varkappa _{j}(s)=(K_{H}(s,\theta ))^{\varepsilon _{j}},\theta <s<t
\end{equation*}%
for every $j=1,...,m$ with $(\varepsilon _{1},...,\varepsilon _{m})\in
\{0,1\}^{m}$ for $\theta ,\theta \prime \in \lbrack 0,T]$ with $\theta
\prime <\theta .$ Let $\alpha \in (\mathbb{N}_{0}^{d})^{m}$ be a
multi-index. If 
\begin{align*}
H<\frac{\frac{1}{2}-\gamma }{(d+2\sum_{l=1}^{d}\alpha _{j}^{(1)})}
\end{align*}
for all $j$, where $\gamma \in (0,H)$ is sufficiently small, then there exists a universal constant $C$ (depending on $H$, $T$
and $d$, but independent of $m$, $\{f_{i}\}_{i=1,...,m}$ and $\alpha $) such
that for any $\theta ,t\in \lbrack 0,T]$ with $\theta <t$ we have%
\begin{eqnarray*}
&&\left\vert E\int_{\Delta _{\theta ,t}^{m}}\left(
\prod_{j=1}^{m}D^{\alpha _{j}}f_{j}(s_{j},B_{s_{j}}^{H})\varkappa
_{j}(s_{j})\right) ds\right\vert  \\
&\leq &C^{m+\left\vert \alpha\right\vert}\prod_{j=1}^{m}\left\Vert f_{j}(\cdot ,z_{j})\right\Vert
_{L^{1}(\mathbb{R}^{d};L^{\infty }([0,T]))}\left( \frac{\theta -\theta
\prime }{\theta \theta \prime }\right) ^{\gamma \sum_{j=1}^{m}\varepsilon
_{j}}\theta ^{(H-\frac{1}{2}-\gamma )\sum_{j=1}^{m}\varepsilon _{j}} \\
&&\times \frac{(\prod_{l=1}^{d}(2\left\vert \alpha ^{(l)}\right\vert
)!)^{1/4}(t-\theta )^{-H(md+2\left\vert \alpha \right\vert )-(H-\frac{1}{2}%
-\gamma )\sum_{j=1}^{m}\varepsilon _{j}+m}}{\Gamma (-H(2md+4\left\vert
\alpha \right\vert )+2(H-\frac{1}{2}-\gamma )\sum_{j=1}^{m}\varepsilon
_{j}+2m)^{1/2}}.
\end{eqnarray*}%

\end{prop}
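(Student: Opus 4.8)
The plan is to follow the proof of Proposition~\ref{mainestimate1} essentially verbatim, the only genuinely new input being a pointwise bound on the weight $s\mapsto K_H(s,\theta)^{\varepsilon_j}$ replacing the one on $s\mapsto\bigl(K_H(s,\theta)-K_H(s,\theta')\bigr)^{\varepsilon_j}$ used there. First I would note, exactly as in that proof, that by the definition \eqref{LambdaDef} of $\Lambda^{\varkappa f}_\alpha$,
$$\int_{\Delta_{\theta,t}^m}\Bigl(\prod_{j=1}^m D^{\alpha_j}f_j(s_j,B^H_{s_j})\,\varkappa_j(s_j)\Bigr)ds=\int_{\R^{dm}}\Lambda^{\varkappa f}_\alpha(\theta,t,z)\,dz ,$$
so that taking expectations and applying estimate \eqref{intestL} of Theorem~\ref{mainthmlocaltime} reduces the task to bounding $(\Psi^\varkappa_k(\theta,t))^{1/2}$, which in the present situation reads
$$\Psi^\varkappa_k(\theta,t)=\prod_{l=1}^d\sqrt{(2|\alpha^{(l)}|)!}\;\sum_{\sigma\in S(m,m)}\int_{\Delta_{0,t}^{2m}}\prod_{j=1}^{2m}\frac{K_H(s_j,\theta)^{\varepsilon_{[\sigma(j)]}}}{|s_j-s_{j-1}|^{H(d+2\sum_{l=1}^d\alpha_{[\sigma(j)]}^{(1)})}}\,ds_1\cdots ds_{2m} .$$

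The heart of the matter is the kernel estimate. Using the closed form of $K_H(t,s)$ recorded just before \eqref{RH} (or its fractional–derivative representation), one proves that there is $C=C(H,T)$ such that $0\le K_H(s,\theta)\le C\,\theta^{H-\frac12}(s-\theta)^{H-\frac12}$ for $0<\theta<s\le T$; since $\theta^{H-\frac12}(s-\theta)^{H-\frac12}=\theta^{\gamma}(s-\theta)^{\gamma}\cdot\theta^{H-\frac12-\gamma}(s-\theta)^{H-\frac12-\gamma}\le T^{2\gamma}\,\theta^{H-\frac12-\gamma}(s-\theta)^{H-\frac12-\gamma}$, this is of the form needed for Lemma~\ref{VI_iterativeInt}, with a small $\gamma$ to spare. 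The two additive pieces of $K_H$ behave differently as $s\downarrow\theta$ and must be treated separately, so this is the step I expect to cost the most work. (One should also observe that, $\varkappa$ no longer involving $\theta'$, the factor $\bigl(\tfrac{\theta-\theta'}{\theta\theta'}\bigr)^{\gamma\sum_j\varepsilon_j}$ appearing in the statement is superfluous here and may simply be bounded by a constant; it is retained only so that the conclusion reads identically to that of Proposition~\ref{mainestimate1}.)

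With such a bound at hand the rest is a routine repetition. Substituting it into $\Psi^\varkappa_k(\theta,t)$ produces, from the $2m$ kernel factors, the global power $\theta^{(H-\frac12-\gamma)\sum_{j=1}^{2m}\varepsilon_{[\sigma(j)]}}$ and, in the integrand, singularities $(s_j-\theta)^{(H-\frac12-\gamma)\varepsilon_{[\sigma(j)]}}$; the hypothesis $H<(\tfrac12-\gamma)/(d+2\sum_{l}\alpha_j^{(1)})$ is precisely what ensures $-H(d+2\sum_{l}\alpha_{[\sigma(j)]}^{(1)})+(H-\tfrac12-\gamma)\varepsilon_{[\sigma(j)]}>-1$ for all $j$ (the worst case being $\varepsilon_{[\sigma(j)]}=1$), so Lemma~\ref{VI_iterativeInt} applies to the iterated integral over the $2m$-simplex. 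It yields the power $(t-\theta)^{-H(2md+4|\alpha|)+(H-\frac12-\gamma)\sum_{j=1}^{2m}\varepsilon_{[\sigma(j)]}+2m}$ and a factor $\Pi_\gamma(2m)\le\prod_{j=1}^{2m}\Gamma\bigl(1-H(d+2\sum_{l}\alpha_{[\sigma(j)]}^{(1)})\bigr)\big/\Gamma\bigl(-H(2md+4|\alpha|)+(H-\tfrac12-\gamma)\sum_{j}\varepsilon_{[\sigma(j)]}+2m\bigr)$. Invoking $\sum_{j=1}^{2m}\varepsilon_{[\sigma(j)]}=2\sum_{j=1}^m\varepsilon_j$, the bounds $\#S(m,m)\le C^m$ and $\prod_{j=1}^{2m}\Gamma(1-H(d+2\sum_{l}\alpha_{[\sigma(j)]}^{(1)}))\le C^m$, and the elementary inequality $\sqrt{a_1+\dots+a_N}\le\sqrt{a_1}+\dots+\sqrt{a_N}$, one obtains upon taking square roots exactly the asserted bound on $(\Psi^\varkappa_k(\theta,t))^{1/2}$, hence on the expectation. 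The main obstacle is therefore confined to the pointwise control of $K_H(s,\theta)$ near $s=\theta$; everything downstream copies the computations in the proof of Proposition~\ref{mainestimate1}.
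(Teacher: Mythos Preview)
Your proposal is correct and follows precisely the route the paper intends: its own proof consists solely of the sentence ``The proof is similar to the previous proposition,'' and you have faithfully spelled out what that similarity amounts to, including the one genuinely new ingredient (the pointwise estimate $K_H(s,\theta)\le C\,\theta^{H-\frac12}(s-\theta)^{H-\frac12}$, which indeed follows from the explicit form of $K_H$) and the observation that the factor $\bigl(\tfrac{\theta-\theta'}{\theta\theta'}\bigr)^{\gamma\sum_j\varepsilon_j}$ is a harmless artefact carried over from Proposition~\ref{mainestimate1}. The only caveat is that Lemma~\ref{VI_iterativeInt} is literally stated for the weight $K_H(s_j,\theta)-K_H(s_j,\theta')$, so strictly speaking you are invoking the analogue of that lemma obtained by feeding in your pointwise bound on $K_H(s_j,\theta)$ rather than the lemma as written; this is exactly the ``similar'' step the paper has in mind, but it is worth saying so explicitly.
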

\begin{proof}
The proof is similar to the previous proposition.
\end{proof}

\begin{rem}\label{Remark 3.4}
We mention that%
\begin{equation*}
\prod_{l=1}^{d}(2\left\vert \alpha ^{(l)}\right\vert )!\leq
(2\left\vert \alpha \right\vert )!C^{\left\vert \alpha \right\vert }
\end{equation*}%
for a constant $C$ depending on $d$. Later on in the paper, when we deal
with the existence of strong solutions, we will consider the case%
\begin{equation*}
\alpha _{j}^{(l)}\in \{0,1\}\text{ for all }j,l
\end{equation*}%
with%
\begin{equation*}
\left\vert \alpha \right\vert =m.
\end{equation*}

\end{rem}


\section{Local times of a fractional Brownian motion and properties}

One can define, heuristically, the local time $L_{t}^{x}\left(B^{H}\right)$
of $B^{H}$ at $x\in\mathbb{R}^{d}$ by
\[
L_{t}^{x}\left(B^{H}\right)=\int_{0}^{t}\delta_{x}(B_{s}^{H})ds.
\]
 It is known that $L_{t}^{x}\left(B^{H}\right)$ exists and is jointly
continuous in $\left(t,x\right)$ as long as $Hd<1$. See e.g. \cite{pitt.78} and the references therein. Moreover, by
the self-similarity property of the fBm one has that $L_{t}^{x}\left(B^{H}\right)\overset{law}{=}t^{1-Hd}L_{1}^{x/t^{H}}(B^{H})$
and, in particular 
\[
L_{t}^{0}\left(B^{H}\right)\overset{law}{=}t^{1-Hd}L_{1}^{0}(B^{H}).
\]
The rigorous construction of $L_{t}^{x}\left(B^{H}\right)$ involves
approximating the Dirac delta function by an approximate unity. It
is convenient to consider the Gaussian approximation of unity 
\[
\varphi_{\varepsilon}(x)=\varepsilon^{-\frac{d}{2}}\varphi\left(\varepsilon^{-\frac{1}{2}}x\right),\quad \varepsilon>0,
\]
for every $x\in \R^d$ where $\varphi$ is the $d$-dimensional standard Gaussian density.
Then, we can define the smoothed local times 
\[
L_{t}^{x}\left(B^{H},\varepsilon\right)=\int_{0}^{t}\varphi_{\varepsilon}(B_{s}^{H}-x)ds
\]
and construct $L_{t}^{x}\left(B^{H}\right)$ as the limit when $\varepsilon$
tends to zero in $L^{2}\left(\Omega\right)$. Note that, using the
Fourier transform, one can write $\varphi_{\varepsilon}(x)$ as follows
\[
\varphi_{\varepsilon}(x)=\frac{1}{\left(2\pi\right)^{d}}\int_{\mathbb{R}^{d}}\exp\left(i\left\langle \xi,x\right\rangle _{\mathbb{R}^{d}}-\varepsilon\frac{\left|\xi\right|_{\mathbb{R}^{d}}^{2}}{2}\right)d\xi.
\]
The previous expression allows us to write 
\[
L_{t}^{x}\left(B^{H},\varepsilon\right)=\frac{1}{\left(2\pi\right)^{d}}\int_{0}^{t}\int_{\mathbb{R}^{d}}\exp\left(i\left\langle \xi,B_{s}^{H}-x\right\rangle _{\mathbb{R}^{d}}-\varepsilon\frac{\left|\xi\right|_{\mathbb{R}^{d}}^{2}}{2}\right)d\xi ds,
\]
and
\begin{eqnarray}
\mathbb{E}\left[L_{t}^{x}\left(B^{H},\varepsilon\right)^{m}\right] & = & \frac{m!}{\left(2\pi\right)^{md}}\int_{\mathcal{T}_{m}(0,t)}\int_{\mathbb{R}^{md}}\mathbb{E}\left[\exp\left(i\sum_{j=1}^{m}\left\langle \xi_{j},B_{s_{j}}^{H}\right\rangle _{\mathbb{R}^{d}}\right)\right]\nonumber \\
 &  & \times\exp\left(-\sum_{j=1}^{m}\left(i\left\langle \xi_{j},x\right\rangle _{\mathbb{R}^{d}}+\frac{\varepsilon\left|\xi_{j}\right|_{\mathbb{R}^{d}}^{2}}{2}\right)\right)d\mathbf{\bar{\xi}}d\mathbf{s},\label{eq:MomentSmoothLT}
\end{eqnarray}
where $\mathbf{\bar{\xi}}=(\xi_{1},...,\xi_{m})=(\xi_{1}^{1},...,\xi_{1}^{d},\ldots,\xi_{m}^{1}...,\xi_{m}^{d})\in\mathbb{R}^{md}$
and $\mathbf{s}=\left(s_{1},...,s_{m}\right)\in\mathcal{T}_{m}(0,t)=\left\{ 0\leq s_{1}<s_{2}<\cdots<s\leq t\right\} $.
Next, note that 
\begin{eqnarray*}
\mathbb{E}\left[\exp\left(i\sum_{j=1}^{m}\left\langle \xi_{j},B_{s_{j}}^{H}\right\rangle _{\mathbb{R}^{d}}\right)\right] & = & \exp\left(-\frac{1}{2}\mathrm{Var}\left[\sum_{j=1}^{m}\sum_{k=1}^{d}\xi_{j}^{k}B_{s_{j}}^{H,k}\right]\right)\\
 & = & \exp\left(-\frac{1}{2}\sum_{k=1}^{d}\mathrm{Var}\left[\sum_{j=1}^{m}\xi_{j}^{k}B_{s_{j}}^{H,k}\right]\right)\\
 & = & \exp\left(-\frac{1}{2}\sum_{k=1}^{d}\mathrm{\left\langle \xi^{k},Q(\mathbf{s})\xi^{k}\right\rangle }_{\mathbb{R}{}^{m}}\right),
\end{eqnarray*}
where $\xi^{k}=\left(\xi_{1}^{k},...,\xi_{m}^{k}\right)$ and $Q(\mathbf{s})$
is the covariance matrix of the vector $\left(B_{s_{1}}^{H,1},...,B_{s_{m}}^{H,1}\right)$.
Rearranging the terms in the second exponential in equation $\left(\ref{eq:MomentSmoothLT}\right)$we
can write 
\begin{eqnarray*}
\mathbb{E}\left[L_{t}^{x}\left(B^{H},\varepsilon\right)^{m}\right] & = & \frac{m!}{\left(2\pi\right)^{md}}\int_{\mathcal{T}_{m}(0,t)}\int_{\mathbb{R}^{md}}\exp\left(-\frac{1}{2}\sum_{k=1}^{d}\left(\mathrm{\left\langle \xi^{k},Q(\mathbf{s})\xi^{k}\right\rangle }_{\mathbb{R}{}^{m}}+\frac{\varepsilon\left|\xi^{k}\right|_{\mathbb{R}{}^{m}}^{2}}{2}\right)\right)\\
 &  & \times\exp\left(-i\sum_{j=1}^{m}\left\langle \xi_{j},x\right\rangle _{d}\right)d\mathbf{\bar{\xi}}d\mathbf{s},\\
 & \leq & \frac{m!}{\left(2\pi\right)^{md}}\int_{\mathcal{T}_{m}(0,t)}\left(\int_{\mathbb{R}^{m}}\exp\left(-\frac{1}{2}\mathrm{\left\langle \xi^{1},Q(\mathbf{s})\xi^{1}\right\rangle }_{\mathbb{R}{}^{m}}-\frac{\varepsilon\left|\xi^{1}\right|_{\mathbb{R}{}^{m}}^{2}}{2}\right)d\xi^{1}\right)^{d}d\mathbf{s}\\
 & \leq & \frac{m!}{\left(2\pi\right)^{md}}\int_{\mathcal{T}_{m}(0,t)}\left(\int_{\mathbb{R}^{m}}\exp\left(-\frac{1}{2}\mathrm{\left\langle \xi^{1},Q(\mathbf{s})\xi^{1}\right\rangle }\right)d\xi^{1}\right)^{d}d\mathbf{s}\\
 & = & \frac{m!}{\left(2\pi\right)^{\frac{dm}{2}}}\int_{\mathcal{T}_{m}(0,t)}\left(\det Q(\mathbf{s})\right)^{-\frac{d}{2}}d\mathbf{s}\triangleq\alpha_{m}.
\end{eqnarray*}
Hence, by dominated convergence, we can conclude that $\mathbb{E}\left[L_{t}^{x}\left(B^{H},\varepsilon\right)^{m}\right]$
converges when $\varepsilon$ tends to zero as long as $\alpha_{m}<\infty.$
If $\alpha_{2}<\infty$, then one can similarly show that
\begin{align*}
\lim_{\varepsilon_{1},\varepsilon_{2}\rightarrow 0+}\mathbb{E}\left[L_{t}^{x}\left(B^{H},\varepsilon_{1}\right)L_{t}^{x}\left(B^{H},\varepsilon_{2}\right)\right]
\end{align*}
exists, which yields the convergence in $L^{2}\left(\Omega\right)$
of $L_{t}^{x}\left(B^{H},\varepsilon\right).$ If $\alpha_{m}<\infty$
for all $m\geq1$ one can deduce the convergence in $L^{p}\left(\Omega\right),p\geq2$
of $L_{t}^{x}\left(B^{H},\varepsilon\right)$. 

The following well known result can be found in Anderson \cite[p. 42]{A58}.
\begin{lem}
\label{lem:detCov}Let $\left(X_{1},\ldots,X_{m}\right)$ be a mean-zero
Gaussian random vector. Then,
\[
\det\left(\mathrm{Cov}\left[X_{1},\ldots,X_{m}\right]\right)=\mathrm{Var}\left[X_{1}\right]\mathrm{Var}\left[X_{2}|X_{1}\right]\cdots\mathrm{Var}\left[X_{m}|X_{m-1},\ldots,X_{1}\right].
\]

\end{lem}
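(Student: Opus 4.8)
~\textbf{Plan of proof.}
The statement is the classical Cholesky-type factorization of the determinant of a Gaussian covariance matrix in terms of successive conditional variances, so the plan is to prove it by induction on $m$, peeling off one variable at a time. For $m=1$ there is nothing to prove, since $\det(\mathrm{Var}[X_1])=\mathrm{Var}[X_1]$. For the inductive step I would condition on $X_1$: write the Gaussian vector as $(X_1,Y)$ with $Y=(X_2,\dots,X_m)$, and use the fact that for a jointly Gaussian vector the conditional distribution of $Y$ given $X_1$ is again Gaussian, with a (deterministic, in the mean-zero case linear) conditional covariance matrix $\Sigma_{Y\mid X_1}$. Then the key linear-algebra fact is the block/Schur-complement identity
\[
\det\left(\mathrm{Cov}\left[X_1,Y\right]\right)=\mathrm{Var}[X_1]\cdot\det\left(\Sigma_{Y\mid X_1}\right),
\]
which holds whenever $\mathrm{Var}[X_1]>0$ (if $\mathrm{Var}[X_1]=0$ both sides vanish and the identity is trivial). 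Applying the induction hypothesis to the $(m-1)$-dimensional Gaussian vector $Y\mid X_1$ — whose $j$-th successive conditional variance is precisely $\mathrm{Var}[X_{j+1}\mid X_j,\dots,X_1]$ by the tower property of conditioning — gives $\det(\Sigma_{Y\mid X_1})=\prod_{j=2}^m\mathrm{Var}[X_j\mid X_{j-1},\dots,X_1]$, and multiplying by $\mathrm{Var}[X_1]$ yields the claimed formula.

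Alternatively, and perhaps more transparently, one can argue directly without induction via the Gram--Schmidt orthogonalization of $X_1,\dots,X_m$ inside $L^2(\Omega)$. Set $\widehat X_1=X_1$ and inductively $\widehat X_j = X_j - \mathrm{proj}(X_j\mid X_1,\dots,X_{j-1})$, the residual of the $L^2$-projection of $X_j$ onto the span of the earlier variables. The $\widehat X_j$ are orthogonal (uncorrelated) and, since everything is jointly Gaussian, independent; moreover $\mathrm{Var}[\widehat X_j]=\mathrm{Var}[X_j\mid X_{j-1},\dots,X_1]$ by the standard identification of Gaussian conditional variance with the variance of the projection residual. The change of basis from $(X_1,\dots,X_m)$ to $(\widehat X_1,\dots,\widehat X_m)$ is lower-triangular with unit diagonal, hence has determinant $1$, so $\det\mathrm{Cov}[X_1,\dots,X_m]=\det\mathrm{Cov}[\widehat X_1,\dots,\widehat X_m]=\prod_{j=1}^m\mathrm{Var}[\widehat X_j]$, which is exactly the right-hand side.

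There is no serious obstacle here; the only point requiring a little care is the degenerate case, i.e.\ when the $X_j$ are linearly dependent so that the covariance matrix is singular. In that case some successive conditional variance $\mathrm{Var}[X_j\mid X_{j-1},\dots,X_1]$ is zero (the first $j$ for which $X_j$ lies in the span of its predecessors), so the right-hand side is $0$, and the left-hand side is likewise $0$; the identity thus holds trivially, and one need not invert any matrices. For the nondegenerate case the Schur-complement determinant identity (or equivalently the triangularity of the Gram--Schmidt change of basis) is elementary. Since the lemma is quoted verbatim from Anderson~\cite{A58}, in the paper it suffices to cite that reference; the sketch above records the short self-contained argument.
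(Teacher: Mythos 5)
Your proof is correct. Note that the paper itself gives no argument for this lemma: it is stated as a "well known result" with a bare citation to Anderson \cite{A58}, so there is no in-paper proof to compare against. Both of your routes are standard and sound. The Schur-complement induction is fine, including your observation that the degenerate case $\mathrm{Var}[X_1]=0$ makes both sides vanish. The Gram--Schmidt argument is arguably the cleaner of the two, since the $L^2$-projection onto the span of $X_1,\dots,X_{j-1}$ is well defined even when the variables are linearly dependent, so no matrix inversion or case split is needed: the unit-lower-triangular change of basis preserves the determinant, the orthogonalized residuals have diagonal covariance, and the identification $\mathrm{Var}[\widehat X_j]=\mathrm{Var}[X_j\mid X_{j-1},\ldots,X_1]$ is exactly the Gaussian fact that conditional variance equals the variance of the projection residual. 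Either argument would serve as a self-contained replacement for the citation.
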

Another useful elementary result is:
\begin{lem}
\label{lem:CondVar}Let $X$ be a square integrable random variable
and $\mathcal{G}_{1}\subset\mathcal{G}_{2}$ be two $\sigma$-algebras.
Then,
\[
\mathrm{Var}\left[X|\mathcal{G}_{1}\right]\geq\mathrm{Var}\left[X|\mathcal{G}_{2}\right].
\]

\end{lem}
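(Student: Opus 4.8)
The plan is to obtain the inequality from the law of total variance for the nested pair $\mathcal{G}_1\subset\mathcal{G}_2$, i.e.\ from the tower property of conditional expectation together with the nonnegativity of a conditional variance.

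First I would record the decomposition
\[
\mathrm{Var}[X|\mathcal{G}_1] = E\big[\mathrm{Var}[X|\mathcal{G}_2]\,\big|\,\mathcal{G}_1\big] + \mathrm{Var}\big[E[X|\mathcal{G}_2]\,\big|\,\mathcal{G}_1\big].
\]
This is checked by expanding each side via $\mathrm{Var}[Y|\mathcal{G}]=E[Y^2|\mathcal{G}]-(E[Y|\mathcal{G}])^2$, using $E[E[X^2|\mathcal{G}_2]|\mathcal{G}_1]=E[X^2|\mathcal{G}_1]$ and $E[E[X|\mathcal{G}_2]|\mathcal{G}_1]=E[X|\mathcal{G}_1]$, and noting that the two occurrences of $E[(E[X|\mathcal{G}_2])^2|\mathcal{G}_1]$ cancel. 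Square integrability of $X$ guarantees all terms are finite a.s.

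Since $\mathrm{Var}[E[X|\mathcal{G}_2]\,|\,\mathcal{G}_1]\geq 0$ a.s., dropping it yields
\[
\mathrm{Var}[X|\mathcal{G}_1] \;\geq\; E\big[\mathrm{Var}[X|\mathcal{G}_2]\,\big|\,\mathcal{G}_1\big]\qquad\text{a.s.}
\]
The only genuine point is the passage from this to the pointwise bound $\mathrm{Var}[X|\mathcal{G}_1]\geq\mathrm{Var}[X|\mathcal{G}_2]$ as stated: it holds whenever $\mathrm{Var}[X|\mathcal{G}_2]$ is a.s.\ constant, since then the conditional expectation on the right equals that constant. In the applications of the lemma in Sections~\ref{Section3}--\ref{Frame} this is automatic, because $X$ and the random variables generating $\mathcal{G}_1,\mathcal{G}_2$ form a jointly Gaussian family, and for a Gaussian vector the conditional variance of one component given the $\sigma$-algebra generated by the others does not depend on $\omega$. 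Hence I would either state the lemma under this Gaussian hypothesis, or --- since it is only ever used to bound $\mathrm{Var}[X|\mathcal{G}_1]$ from below by a deterministic quantity (as when applying strong local non-determinism \eqref{eq:SLND}) --- content myself with the displayed conditional-expectation inequality, which already delivers the desired lower bound after taking expectations. In either reading the argument is short, and I expect no obstacle beyond being precise about this last point.
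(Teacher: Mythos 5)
The paper offers no proof of this lemma at all --- it is introduced as ``another useful elementary result'' and stated bare --- so there is nothing to compare your argument against; your derivation via the conditional law of total variance,
\[
\mathrm{Var}[X|\mathcal{G}_1] = E\bigl[\mathrm{Var}[X|\mathcal{G}_2]\,\big|\,\mathcal{G}_1\bigr] + \mathrm{Var}\bigl[E[X|\mathcal{G}_2]\,\big|\,\mathcal{G}_1\bigr]\;\geq\;E\bigl[\mathrm{Var}[X|\mathcal{G}_2]\,\big|\,\mathcal{G}_1\bigr],
\]
is the standard route and is correct. Your caveat is also genuinely on point and worth keeping: as literally written the lemma asserts an almost-sure pointwise inequality between two random variables, which is false for general $X$ and nested $\sigma$-algebras (one can make $\mathrm{Var}[X|\mathcal{G}_2]$ exceed the constant $\mathrm{Var}[X]$ on a set of positive probability when $\mathcal{G}_1$ is trivial). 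What saves the paper is exactly what you identify: in its only use --- the lower bound on $\det Q(\mathbf{s})$ obtained by combining Lemma \ref{lem:detCov} with the strong local non-determinism \eqref{eq:SLND} --- every conditioning $\sigma$-algebra is generated by variables jointly Gaussian with $X$, so each conditional variance is deterministic and the displayed inequality collapses to the stated pointwise one after taking $E[\,\cdot\,|\mathcal{G}_1]$ of the constant. Either of your two proposed fixes (adding the Gaussian hypothesis, or restating the conclusion as $\mathrm{Var}[X|\mathcal{G}_1]\geq E[\mathrm{Var}[X|\mathcal{G}_2]|\mathcal{G}_1]$) would make the lemma correct as a standalone statement without affecting any application in the paper.
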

Combining Lemmas \ref{lem:detCov}, \ref{lem:CondVar} and (\ref{eq:SLND})
we get that
\begin{eqnarray*}
\det Q(\mathbf{s}) & = & \mathrm{Var}\left[B_{s_{1}}^{H,1}\right]\mathrm{Var}\left[B_{s_{2}}^{H,1}|B_{s_{1}}^{H,1}\right]\cdots\mathrm{Var}\left[B_{s_{m}}^{H,1}|B_{s_{m-1}}^{H,1},\ldots,B_{s_{1}}^{H,1}\right]\\
 & \geq & s_{1}^{2H}\mathrm{Var}\left[B_{s_{2}}^{H,1}|\mathcal{F}_{s_{1}}\right]\cdots\mathrm{Var}\left[B_{s_{m}}^{H,1}|\mathcal{F}_{s_{m-1}}\right]\\
 & \geq & K^{m-1}s_{1}^{2H}\left(s_{2}-s_{1}\right)^{2H}\cdots\left(s_{m}-s_{m-1}\right)^{2H}
\end{eqnarray*}
and, therefore, 
\begin{align*}
\int_{\mathcal{T_{\textnormal{\ensuremath{m}}}}\left(0,t\right)}\left(\det Q(\mathbf{s})\right)^{-\frac{d}{2}}d\mathbf{s} & \leq K^{\frac{d}{2}(1-m)}\int_{\mathcal{T_{\textnormal{\ensuremath{m}}}}\left(0,t\right)}s_{1}^{-Hd}\left(s_{2}-s_{1}\right)^{-Hd}\cdots\left(s_{m}-s_{m-1}\right)^{-Hd}d\mathbf{s}\\
 & =K^{\frac{d}{2}(1-m)}\left(\prod_{j=1}^{m}\mathcal{B}\left(j\left(1-Hd\right),1-Hd\right)\right)t^{m\left(1-Hd\right)}<\infty,
\end{align*}
if $Hd$$<1$. Finally, we have proved the bound 
\begin{align}
\mathbb{E}\left[L_{t}^{x}\left(B^{H}\right)^{m}\right] & \leq\frac{m!}{\left(2\pi\right)^{\frac{dm}{2}}}K^{\frac{d}{2}(1-m)}\left(\prod_{j=1}^{m}\mathcal{B}\left(j\left(1-Hd\right),1-Hd\right)\right)t^{m\left(1-Hd\right)}\label{eq:BoundMomentLT}
\end{align}

\begin{rem}
We just have checked that if $Hd<1$ then $L_{t}^{x}\left(B^{H}\right)$
exists and has moments of all orders. By checking that $\sum_{m\geq1}\frac{\alpha_{m}}{m!}<\infty$,
one can deduce that $L_{t}^{x}\left(B^{H}\right)$ has exponential
moments or all orders. Furthermore, one can also show the existence of exponential
moments of $L_{t}^{x}\left(B^{H}\right)^{2}$ by doing similar computations as before. However, one may also 
use Theorem \ref{thm:LDforfBM} below to show that the exponential
moments are finite.
\end{rem}
Chen et al. \cite{Ch et al 11} proved the following result on large
deviations for local times of fractional Brownian motion, which we won't use in our paper but which is of independent interest:
\begin{thm}
\label{thm:LDforfBM}Let $B^{H}$ be a standard fractional Brownian
motion with Hurst index $H$ such that $Hd<1$. Then the limit
\[
\lim_{a\rightarrow\infty}a^{-\frac{1}{Hd}}\log P\left(L_{1}^{0}(B^{H})\geq a\right)=-\theta(H,d),
\]
exists and \textup{$\theta(H,d)$ satisfies the following bounds
\[
\left(\frac{\pi c_{H}^{2}}{H}\right)^{\frac{1}{2H}}\theta_{0}(Hd)\leq\theta(H,d)\leq\left(2\pi\right)^{\frac{1}{2H}}\theta_{0}(Hd),
\]
where $c_{H}$ is given by and 
\[
\theta_{0}(\lambda)=\lambda\left(\frac{(1-\lambda)^{1-\lambda}}{\Gamma(1-\lambda)}\right)^{1/\lambda}.
\]
}
\end{thm}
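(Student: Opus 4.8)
Since the statement is quoted verbatim from \cite{Ch et al 11}, we only describe the strategy. Throughout put $X:=L_{1}^{0}(B^{H})$ and $\lambda:=Hd\in(0,1)$; by the scaling relation $L_{t}^{0}(B^{H})\overset{law}{=}t^{1-\lambda}X$ recalled above, it is enough to analyse $X$.

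\emph{Reduction to moment asymptotics.} The plan is to show that the limit
\[
\ell(H,d):=\lim_{m\to\infty}\frac{1}{m}\Big(\log\mathbb{E}[X^{m}]-\lambda\log(m!)\Big)
\]
exists in $\mathbb{R}$ and then to invoke a de Bruijn--Kasahara type exponential Tauberian theorem: for an absolutely continuous non-negative random variable with all moments finite, $\lim_{m}m^{-1}(\log\mathbb{E}[X^{m}]-\lambda\log(m!))=\ell$ exists if and only if $\lim_{a\to\infty}a^{-1/\lambda}\log P(X\ge a)$ exists, and then the two are linked by $\lim_{a\to\infty}a^{-1/\lambda}\log P(X\ge a)=-\lambda\,e^{-\ell/\lambda}$ (the passage from the tail to the moments is Watson's lemma; the converse is the Tauberian step, applicable here since $X$ has a smooth density; equivalently one may run G\"{a}rtner--Ellis on $\mu\mapsto\log\mathbb{E}[e^{\mu X}]$). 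This reduces the theorem to (i) the existence of $\ell(H,d)$, together with (ii) the identity $\theta(H,d)=\lambda\,e^{-\ell(H,d)/\lambda}$ and two-sided bounds on $\ell(H,d)$.

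\emph{Two-sided bounds.} For (ii) I would start from the moment identity of this section (put $x=0$ and let $\varepsilon\to0$ in the computation of $\mathbb{E}[L_{t}^{x}(B^{H},\varepsilon)^{m}]$, all inequalities becoming equalities since nothing is discarded), $\mathbb{E}[X^{m}]=\frac{m!}{(2\pi)^{dm/2}}\int_{\mathcal{T}_{m}(0,1)}(\det Q(\mathbf{s}))^{-d/2}\,d\mathbf{s}$, together with the factorisation $\det Q(\mathbf{s})=\prod_{j=1}^{m}\mathrm{Var}[B_{s_{j}}^{H,1}\mid B_{s_{1}}^{H,1},\dots,B_{s_{j-1}}^{H,1}]$ of Lemma~\ref{lem:detCov} (with $s_{0}:=0$). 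Bounding $\det Q(\mathbf{s})$ \emph{below} by strong local non-determinism \eqref{eq:SLND} in its sharp form, $\mathrm{Var}[B_{t}^{H,1}\mid\{B_{s}^{H,1}:s\le u\}]\ge\frac{c_{H}^{2}}{2H}(t-u)^{2H}$, reduces the $m$-th moment to the Dirichlet integral $\int_{\mathcal{T}_{m}(0,1)}\prod_{j}(s_{j}-s_{j-1})^{-Hd}\,d\mathbf{s}$, a product of Beta functions; Stirling's formula then gives an \emph{upper} bound on $\ell(H,d)$ which, through the Tauberian relation, is exactly the claimed lower bound $\theta(H,d)\ge(\pi c_{H}^{2}/H)^{1/(2H)}\theta_{0}(Hd)$ (using $d/(2\lambda)=1/(2H)$). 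For the upper bound on $\theta$ one replaces the lower estimate on the conditional variances by the elementary upper one, $\mathrm{Var}[B_{s_{j}}^{H,1}\mid B_{s_{1}}^{H,1},\dots,B_{s_{j-1}}^{H,1}]\le\mathrm{Var}[B_{s_{j}}^{H,1}-B_{s_{j-1}}^{H,1}]=(s_{j}-s_{j-1})^{2H}$ (take $B_{s_{j-1}}^{H,1}$ as linear predictor, or apply Lemma~\ref{lem:CondVar}); the same Beta integral, now producing a lower bound on the moments, yields $\theta(H,d)\le(2\pi)^{1/(2H)}\theta_{0}(Hd)$. (When $H=\tfrac12$, $d=1$ the conditional variances are exact, the two estimates coincide, and one recovers $\theta=\tfrac12$, consistent with $L_{1}^{0}(W)\overset{law}{=}|W_{1}|$.)

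\emph{Existence of the limit --- the main obstacle.} For $H\neq\tfrac12$ the two bounds do not coincide, so the existence of $\ell(H,d)$ is not a formal consequence of them and is the technical heart of the proof. Using the scaling identity $\mathbb{E}[e^{\mu X}]=\mathbb{E}[\exp(L_{T}^{0}(B^{H}))]$ with $T=\mu^{1/(1-\lambda)}$, the existence of $\ell(H,d)$ is equivalent to the existence of the ``principal eigenvalue'' limit $\lim_{T\to\infty}T^{-1}\log\mathbb{E}[\exp(L_{T}^{0}(B^{H}))]$. The natural route is a (super)additivity argument for $T\mapsto\log\mathbb{E}[\exp(L_{T}^{0}(B^{H}))]$ built on the decomposition $L_{S+T}^{0}(B^{H})=L_{S}^{0}(B^{H})+L_{T}^{-B_{S}^{H}}(\vartheta_{S}B^{H})$, where $\vartheta_{S}B^{H}_{\cdot}:=B_{S+\cdot}^{H}-B_{S}^{H}$ is again a fractional Brownian motion by stationarity of the increments. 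The obstruction --- and this is exactly where the argument becomes delicate --- is that $\vartheta_{S}B^{H}$ is \emph{not} independent of $\mathcal{F}_{S}$; overcoming it requires quantifying this dependence, and here the (strong) local non-determinism property \eqref{SLND}--\eqref{eq:SLND} re-enters as a substitute for the missing independence of increments. This step is carried out in detail in \cite{Ch et al 11}; combined with the two-sided bounds above it completes the proof.
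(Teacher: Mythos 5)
This theorem is not proved in the paper at all: it is quoted verbatim from Chen et al.\ \cite{Ch et al 11}, introduced as a result of independent interest, and no argument for it appears anywhere in the text. There is therefore no in-paper proof to compare your proposal against; the only meaningful benchmark is the cited reference, and your submission itself acknowledges that it is a strategy outline rather than a proof.

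On its own terms the outline is a faithful and essentially correct account of the moment method. The ingredients you invoke are all consistent with Section 4 of the paper: the exact moment formula is what one gets by letting $\varepsilon\to 0$ in \eqref{eq:MomentSmoothLT} at $x=0$; the factorisation of $\det Q(\mathbf{s})$ into conditional variances is Lemma \ref{lem:detCov}; the upper estimate $\mathrm{Var}[B_{s_j}^{H,1}\mid B_{s_1}^{H,1},\dots,B_{s_{j-1}}^{H,1}]\leq (s_j-s_{j-1})^{2H}$ follows from Lemma \ref{lem:CondVar}; the directions are right (an upper bound on moments yields a lower bound on $\theta$ and vice versa); and your consistency check at $H=\tfrac{1}{2}$, $d=1$, where both bounds collapse to $2\pi\,\theta_0(1/2)=\tfrac{1}{2}$, is correct.

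As a proof, however, there is a genuine gap exactly where you flag it. The existence of the limit $\ell(H,d)$, equivalently of $\lim_{T\to\infty}T^{-1}\log E[\exp(L_T^0(B^H))]$, is the entire content of the theorem beyond the two-sided bounds, and the bounds cannot supply it since they do not coincide for $H\neq\tfrac{1}{2}$. The decomposition $L_{S+T}^0=L_S^0+L_T^{-B_S^H}(\vartheta_S B^H)$ founders precisely on the dependence of $\vartheta_S B^H$ on $\mathcal{F}_S$, and controlling that dependence via local nondeterminism is the substantive technical work of \cite{Ch et al 11}; deferring it wholesale means the proposal functions as a reading guide to the reference rather than as a proof. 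Two smaller points: the sharp constant $\frac{c_H^2}{2H}(t-u)^{2H}$ you use in the one-sided strong local nondeterminism bound does not follow from \eqref{eq:SLND} as stated (which only provides an unspecified constant $K$) and would need separate justification to produce the stated constant $\bigl(\pi c_H^2/H\bigr)^{1/(2H)}$; and the Kasahara-type exponential Tauberian step should be cited precisely --- smoothness of the density of $L_1^0(B^H)$ is neither required nor the relevant hypothesis there.
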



\section{Existence of strong solutions}

As outlined in the introduction the object of study is a \emph{generalized} SDE with additive $d$-dimensional fractional Brownian noise $B^H$ with Hurst parameter $H\in (0,1/2)$, i.e.
\begin{equation}
X_{t}^{x}=x+\alpha L_{t}(X^{x})\cdot \boldsymbol{1}_{d}+B_{t}^{H},0\leq
t\leq T,x\in \mathbb{R}^{d},  \label{SDE}
\end{equation}%

where $L_t(X^x)$, $t\in [0,T]$ is a stochastic process of bounded variation which arises from taking the limit
$$L_t(X^x) := \lim_{\varepsilon \searrow 0} \int_0^t \varphi_{\varepsilon} (X_s^x) ds,$$
in probability, where $\varphi_{\varepsilon}$ are probability densities
approximating $\delta_{0}$, denoting $\delta_0$ the Dirac delta generalized function with total mass at 0. We will consider
\begin{equation}
\varphi_{\varepsilon}(x)=\varepsilon^{-\frac{d}{2}}\varphi(\varepsilon^{-\frac{1}{2}}x),\quad\varepsilon>0,\label{eq:GaussianKernels}
\end{equation}
where $\varphi$ is the $d$-dimensional standard Gaussian density function.

Hereunder, we establish the main result of this section.

\begin{thm}\label{mainthm}
If $H< 1/(2(2+d))$, $d\geq 1$ there exists a continuous strong solution $X^x =\{X_t^x , t\in [0,T], x\in \R^d\}$ of equation \eqref{SDE} for all $\alpha$. Moreover, for every $t\in [0,T]$, $X_t$ is Malliavin differentiable in the direction of the Brownian motion $W$ in \eqref{WBH}.
\end{thm}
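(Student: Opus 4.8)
The plan is to construct the strong solution as an $L^2(\Omega)$-limit of the classical strong solutions $X^{n}$ of the regularized SDEs
\begin{equation*}
X_{t}^{n}=x+\int_{0}^{t}\alpha\varphi_{1/n}(X_{s}^{n})\cdot\boldsymbol{1}_{d}\,ds+B_{t}^{H},\qquad 0\leq t\leq T,
\end{equation*}
which exist and are unique by the Girsanov theorem (Theorem \ref{girsanov}), since $\varphi_{1/n}$ is smooth and bounded. First I would fix $n$ and, using Girsanov's theorem to pass to a measure $\widetilde{P}_{n}$ under which $X^{n}$ is a fractional Brownian motion, derive a Picard-type representation of $X_{t}^{n}$ (and more importantly of functionals $\Phi(X^{n})$) as an infinite series of iterated integrals over simplices $\Delta_{\theta,t}^{m}$ involving the drift $b_{n}=\alpha\varphi_{1/n}\cdot\boldsymbol{1}_{d}$ evaluated along $B^{H}$, together with the Doléans exponential $\xi_{T}^{n}$. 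The weight functions $\varkappa_{j}$ that appear are precisely of the form $(K_{H}(s,\theta))^{\varepsilon_{j}}$ and $(K_{H}(s,\theta)-K_{H}(s,\theta'))^{\varepsilon_{j}}$ occurring in Propositions \ref{mainestimate1} and \ref{mainestimate2}, because differentiating the Girsanov density in the Brownian direction produces exactly these kernels (via \eqref{inverseKH}).

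The core of the argument is to show that the sequence $\{X_{t}^{n}\}_{n}$ is \emph{relatively compact in $L^{2}(\Omega)$} and that \emph{all limit points coincide}, which together force convergence of the whole sequence. For relative compactness I would invoke the Malliavin-calculus compactness criterion (of the type used in \cite{BNP.16, MBP10}): it suffices to bound $\sup_{n}\mathbb{E}[|X_{t}^{n}|^{2}]$, $\sup_{n}\mathbb{E}[\|D_{\cdot}X_{t}^{n}\|_{L^{2}([0,T])}^{2}]$ and a Hölder-type modulus of the Malliavin derivative $\mathbb{E}[|D_{\theta}X_{t}^{n}-D_{\theta'}X_{t}^{n}|^{2}]\leq C|\theta-\theta'|^{1+\eta}$ uniformly in $n$. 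Computing $D_{\theta}X_{t}^{n}$ from the series representation yields, term by term, expressions of the form $\mathbb{E}\int_{\Delta_{\theta,t}^{m}}\prod_{j}D^{\alpha_{j}}(\varphi_{1/n})_{j}(B_{s_{j}}^{H})\varkappa_{j}(s_{j})\,ds$ with $|\alpha|=m$ and each $\alpha_{j}^{(l)}\in\{0,1\}$ — exactly the setting of Remark \ref{Remark 3.4}. Applying Propositions \ref{mainestimate1} and \ref{mainestimate2} bounds each term by $C^{m+|\alpha|}\prod_{j}\|(\varphi_{1/n})_{j}\|_{L^{1}(\R^{d};L^{\infty})}$ times Gamma-function factors; the crucial point is that $\|\varphi_{1/n}\|_{L^{1}(\R^{d})}=1$ uniformly in $n$, so these bounds are $n$-independent, and the factorial growth $(2|\alpha|)!^{1/4}$ against the $\Gamma$-denominator of order $\Gamma(\cdots+2m)^{1/2}$ makes the series converge — this is where the hypothesis $H<1/(2(2+d))$ enters, guaranteeing $H<(\tfrac12-\gamma)/(d+2)$ so that the simplex integrals in Propositions \ref{mainestimate1}–\ref{mainestimate2} are finite for $\alpha_{j}^{(l)}\in\{0,1\}$, $|\alpha|=m$. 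The $(\theta-\theta')^{\gamma\sum\varepsilon_{j}}$ factor in Proposition \ref{mainestimate1} delivers the required Hölder modulus.

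To identify the limit, I would show that if $X^{n_{k}}\to Y$ in $L^{2}(\Omega)$ along a subsequence, then $\int_{0}^{t}\varphi_{1/n_{k}}(X_{s}^{n_{k}})\,ds$ converges in $L^{2}(\Omega)$ to some process $L_{t}(Y)$; using the bound \eqref{eq:BoundMomentLT} on local-time moments of $B^{H}$ (valid since $Hd<1$) together with the fact that $X^{n_{k}}$ is, under $\widetilde{P}_{n_{k}}$, a copy of $B^{H}$ with uniformly integrable densities, one checks that this limit is precisely the (symmetric) local time at zero of $Y$, so $Y$ solves \eqref{SDE}. Adaptedness of $Y$ to the $P$-augmented filtration of $B^{H}$ follows because each $X^{n}$ is adapted and $L^{2}$-limits preserve adaptedness, giving a \emph{strong} solution; continuity in $(t,x)$ follows from a Kolmogorov-type estimate on increments of $X_{t}^{n}-X_{s}^{n}$ and $X_{t}^{x}-X_{t}^{y}$ that is again uniform in $n$ by the same simplex estimates. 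Finally, Malliavin differentiability of $X_{t}$ in the $W$-direction follows from the uniform Malliavin-norm bound established above (the closedness of the Malliavin derivative operator). \textbf{The main obstacle} I anticipate is controlling the Girsanov density $\xi_{T}^{n}$ and its Malliavin derivative uniformly in $n$ — these introduce additional kernel-weighted factors into the series, and one must verify that the combinatorial bookkeeping (Lemma \ref{partialshuffle} handles the nested-simplex structure) still yields a summable series with $n$-independent constants; a secondary difficulty is proving that the limit process genuinely has a local time equal to the drift term, rather than merely some bounded-variation process, which requires a careful passage to the limit in the occupation-time functionals.
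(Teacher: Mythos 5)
Your compactness step is essentially the paper's: the Malliavin derivative of $X_t^{n}$ is expanded as a series of iterated simplex integrals, each term is estimated via Propositions \ref{mainestimate1} and \ref{mainestimate2} with $\alpha_j^{(l)}\in\{0,1\}$, $|\alpha|=m$ (Remark \ref{Remark 3.4}), the normalization $\|\varphi_{1/n}\|_{L^1(\R^d)}=1$ makes the bounds $n$-independent, and the condition $H<1/(2(2+d))$ makes the series summable; the resulting bounds feed into the compactness criterion of Corollary \ref{compactcrit}. (One small imprecision: the criterion does not require a pointwise H\"older modulus $C|\theta-\theta'|^{1+\eta}$ but only finiteness of the double integral against $|\theta-\theta'|^{-1-2\beta}$; the actual bound carries a singularity $\theta^{2(H-\frac12-\gamma)}$ at $\theta=0$ and is integrable rather than uniformly H\"older.)

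The genuine gap is in your identification of the limit. You propose to show that every $L^2$-limit point $Y$ of $\{X_t^{n}\}$ solves \eqref{SDE} and then conclude that ``all limit points coincide.'' That conclusion does not follow: it would require uniqueness of solutions to \eqref{SDE}, which is not available --- the paper only obtains uniqueness under an extra convergence hypothesis on the Dol\'eans--Dade exponentials (Proposition \ref{mainprop}), and certainly not as an ingredient of the existence proof. Moreover, the convergence of the occupation functionals $\int_0^t\varphi_{1/n_k}(X_s^{n_k})\,ds$ to the local time of the limit, which you flag as a ``secondary difficulty,'' is precisely the hard analytic point and is not resolved by the local-time moment bound \eqref{eq:BoundMomentLT} alone, since both the mollifier and the process vary with $n$. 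The paper closes this circle differently: it first constructs a weak solution $X$ by Girsanov (Lemma \ref{expmom}, Proposition \ref{weaksolution}), then proves in Lemma \ref{weakconv} that $\eta(X_t^{n})\to E[\eta(X_t)\,|\,\mathcal{F}_t]$ weakly in $L^2$ by testing against the total family $\Sigma_t$ of exponentials of increments of $B^H$ --- this pins down the \emph{unique} limit of the relatively compact sequence as $E[X_t|\mathcal{F}_t]$ --- and finally shows $E[X_t|\mathcal{F}_t]=X_t$ by comparing the strong and weak limits of $h(X_t^{n})$ for Lipschitz $h$, which yields $\mathcal{F}_t$-measurability of $X_t$. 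Without some substitute for this weak-convergence identification (or an independently proven uniqueness statement), your argument does not yield convergence of the full sequence nor that the limit is adapted \emph{and} solves the equation.
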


\begin{prop}\label{mainprop}
Retain the conditions of Theorem \ref{mainthm}. Let $Y_{\cdot }^{x}$ be another solution
to the SDE (\ref{SDE}). Suppose that the Dolean-Dade exponentials
\begin{equation*}
\mathcal{E}(\int_{0}^{T}-K_{H}^{-1}(\int_{0}^{\cdot }\varphi _{\varepsilon
}(Y_{u}^{x})\boldsymbol{1}_{d}du)^{\ast }(s)dW_{s}),\varepsilon >0
\end{equation*}%
converge in $L^{p}(\Omega )$ for $\varepsilon \longrightarrow 0$ for all $%
p\geq 1$, where $\varphi _{\varepsilon }$ is the approximation of the Dirac
delta $\delta _{0}$ in \ref{eq:GaussianKernels} and $\ast $ denotes transposition. Then strong
uniqueness holds for such solutions.\newline In particular, this is the
case, if e.g. uniqueness in law is satisfied.
\end{prop}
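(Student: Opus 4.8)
The plan is to strip the drift off $Y^{x}$ by Girsanov's theorem (Theorem~\ref{girsanov}) together with a mollification limit, and then to use that the strong solution $X^{x}$ of Theorem~\ref{mainthm} is a Borel functional of the driving noise $B^{H}$. Concretely, put $u_{s}^{\varepsilon}:=\alpha\varphi_{\varepsilon}(Y_{s}^{x})\boldsymbol{1}_{d}$, so that $t\mapsto\int_{0}^{t}u_{s}^{\varepsilon}\,ds$ is absolutely continuous with bounded derivative. Then both hypotheses of Theorem~\ref{girsanov} hold for every fixed $\varepsilon>0$: (i) because such paths belong to $I_{0+}^{H+1/2}(L^{2})$ by the semigroup property of the Riemann--Liouville integrals, and (ii) by a Novikov-type estimate based on the representation \eqref{inverseKH} and the boundedness of $\varphi_{\varepsilon}$. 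Hence, writing $\xi_{T}^{\varepsilon}$ for the corresponding Dol\'{e}ans--Dade exponential (the one in the statement, with $u^{\varepsilon}$ replacing the limiting drift), the process $\widetilde{B}_{t}^{H,\varepsilon}:=B_{t}^{H}+\int_{0}^{t}u_{s}^{\varepsilon}\,ds$ is an $\mathcal{F}$-fractional Brownian motion under $\widetilde{P}^{\varepsilon}:=\xi_{T}^{\varepsilon}\cdot P$. Now let $\varepsilon\searrow0$: by the definition of a solution, $\int_{0}^{t}u_{s}^{\varepsilon}\,ds\to\alpha L_{t}(Y^{x})\boldsymbol{1}_{d}$ in probability for each $t$, so $\widetilde{B}_{t}^{H,\varepsilon}\to\widetilde{B}_{t}^{H}:=Y_{t}^{x}-x$ in probability, while by hypothesis $\xi_{T}^{\varepsilon}\to\xi_{T}$ in every $L^{p}(\Omega)$, so $E[\xi_{T}]=1$ and the $\xi_{T}^{\varepsilon}$ are uniformly integrable. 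Passing to the limit in the identities $\widetilde{E}^{\varepsilon}[\exp(i\sum_{j}\langle\lambda_{j},\widetilde{B}_{t_{j}}^{H,\varepsilon}\rangle)]=E[\xi_{T}^{\varepsilon}\exp(i\sum_{j}\langle\lambda_{j},\widetilde{B}_{t_{j}}^{H,\varepsilon}\rangle)]$ then shows that $\widetilde{B}^{H}=Y^{x}-x$ is a fractional Brownian motion under $\widetilde{P}:=\xi_{T}\cdot P$; together with the analogous statement for the inverse exponentials this also gives $\xi_{T}>0$ $P$-a.s., i.e. $\widetilde{P}\sim P$.

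The key point is now that, under $\widetilde{P}$, every quantity tied to $Y^{x}$ becomes one and the same Borel functional of the single path $\widetilde{B}^{H}$, not depending on which solution $Y^{x}$ one started from: indeed $Y^{x}=x+\widetilde{B}^{H}$, the noise is $B^{H}=\widetilde{B}^{H}-\alpha L_{\cdot}(x+\widetilde{B}^{H})\boldsymbol{1}_{d}=:\rho(\widetilde{B}^{H})$, the Wiener process $W$ of \eqref{WBH} attached to $B^{H}$ and hence $\xi_{T}=:G(\widetilde{B}^{H})$ are Borel functionals of $\widetilde{B}^{H}$, and $w\mapsto L_{\cdot}(x+w)$ is Borel (defined $P_{B^{H}}$-a.e.), being the in-probability limit of $\int_{0}^{\cdot}\varphi_{\varepsilon}(x+w_{s})\,ds$. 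Hence, for every bounded Borel $F$ on $C([0,T];\R^{d})^{2}$,
\begin{equation*}
E\big[F(Y^{x},B^{H})\big]=\widetilde{E}\Big[\frac{F\big(x+\widetilde{B}^{H},\rho(\widetilde{B}^{H})\big)}{G(\widetilde{B}^{H})}\Big]=E\Big[\frac{F\big(x+B^{H},\rho(B^{H})\big)}{G(B^{H})}\Big],
\end{equation*}
the last equality because $\widetilde{B}^{H}$ under $\widetilde{P}$ has the law of $B^{H}$ under $P$; the right-hand side does not depend on $Y^{x}$. Since $X^{x}$ itself satisfies the convergence hypothesis --- this being part of its construction in Theorem~\ref{mainthm}, where $X^{x}$ arises as the $L^{2}$-limit of the mollified solutions, together with the integrability of the local time established earlier --- we obtain $(Y^{x},B^{H})\overset{d}{=}(X^{x},B^{H})$. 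As $X^{x}$ is adapted to the augmented filtration generated by $B^{H}$, we have $X^{x}=\Phi(B^{H})$ $P$-a.s. for a Borel map $\Phi$; testing the equality of laws against $F(y,b)=\boldsymbol{1}_{\{y\neq\Phi(b)\}}$ gives $P(Y^{x}\neq\Phi(B^{H}))=P(X^{x}\neq\Phi(B^{H}))=0$, i.e. $Y^{x}=X^{x}$ $P$-a.s., which is the asserted strong uniqueness.

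Finally, for the ``in particular'' clause: if uniqueness in law holds for \eqref{SDE}, then $Y^{x}$ and $X^{x}$ have the same law as $C([0,T];\R^{d})$-valued random variables; since for each $\varepsilon$ the exponential $\xi_{T}^{\varepsilon}$ is a \emph{fixed} Borel functional of the solution path --- through $\varphi_{\varepsilon}(Y^{x}_{\cdot})$ and through $B^{H}=Y^{x}-x-\alpha L_{\cdot}(Y^{x})\boldsymbol{1}_{d}$ and its attached Wiener process --- the family $(\xi_{T}^{\varepsilon})_{\varepsilon>0}$ built from $Y^{x}$ has, for every $p$, the same finite-dimensional distributions as the one built from $X^{x}$; as the latter is Cauchy in $L^{p}$, so is the former, which therefore converges in $L^{p}$. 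Thus the hypothesis of the proposition is automatic and strong uniqueness follows from the previous paragraph. The crux of the whole argument is the first step: checking the Girsanov hypotheses for the mollified drifts and, above all, extracting from the bare $L^{p}$-convergence of the $\xi_{T}^{\varepsilon}$ both that the limit measure $\widetilde{P}$ is equivalent to $P$ (so that $\xi_{T}>0$ a.s. and the change-of-variables above is legitimate) and that $Y^{x}-x$ is genuinely a fractional Brownian motion under $\widetilde{P}$; once this is secured, the remaining steps are routine measure-theoretic transfers.
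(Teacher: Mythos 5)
Your first half --- mollify the drift of $Y^{x}$, apply Theorem \ref{girsanov} for each fixed $\varepsilon$, and use the hypothesized $L^{p}$-convergence of the exponentials to pass to the limit and conclude that $Y^{x}-x$ is a fractional Brownian motion under $\widetilde{P}=\xi_{T}\cdot P$ --- is exactly the paper's first step. Where you genuinely diverge is the identification with $X^{x}$: the paper tests $E[\eta(Y_{t}^{x})Z]$ against the total family $Z\in\Sigma_{t}\subset L^{2}(\Omega,\mathcal{F}_{t},P)$ of exponentials of increments of $B^{H}$ (exactly as in Lemma \ref{weakconv}), reducing both $E[\eta(Y_{t}^{x})Z]$ and $E[\eta(X_{t}^{x})Z]$ to the same canonical Girsanov expression, and concludes $E[\eta(Y_{t}^{x})\,|\,\mathcal{F}_{t}]=E[\eta(X_{t}^{x})\,|\,\mathcal{F}_{t}]=\eta(X_{t}^{x})$, whence $Y_{t}^{x}=X_{t}^{x}$; you instead pin down the joint law of $(Y^{x},B^{H})$ and run the standard ``weak uniqueness plus strong existence implies pathwise uniqueness'' mechanism through the Borel functional $\Phi$ with $X^{x}=\Phi(B^{H})$. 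Both identifications work at comparable cost; yours makes the measurability bookkeeping explicit, while the paper's never has to assert that $X^{x}$ itself verifies the convergence hypothesis of the proposition. That last point is the one substantive loose end in your write-up: the exponentials in the hypothesis carry $\varphi_{\varepsilon}(Y_{u}^{x})$ and a minus sign, whereas Proposition \ref{weaksolution} establishes convergence for the exponentials built from $\varphi_{\varepsilon}(B_{u}^{H})$ with a plus sign, so the claim that $X^{x}$ satisfies the hypothesis ``as part of its construction'' requires a (routine but not free) transfer by change of measure; more economically, you could observe that the canonical law $E[F(x+B^{H},\rho(B^{H}))/G(B^{H})]$ is already supported on the graph of $\Phi$ by the proof of Theorem \ref{mainthm}, which removes the need for that assertion altogether. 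The two remaining soft spots you flag yourself --- strict positivity of $\xi_{T}$ (needed for $\widetilde{P}\sim P$ and for dividing by $G$) and the status of the limiting stochastic exponential as a fixed Borel functional of the path --- are passed over at exactly the same level of rigor in the paper's own two-sentence proof, so they do not count against you relative to the source.
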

The proof of Theorem \eqref{mainthm} essentially consists of four steps:
\begin{enumerate}
\item In the first step, we construct a weak solution $X$ to \eqref{SDE}
by using the version of Girsanov's theorem for the fractional Brownian motion, that is we consider a probability
space $(\Omega,\mathfrak{A},P)$ on which a fractional Brownian
motion $B^{H}$ and a process $X^{x}$ are defined such that \eqref{SDE} holds. However, a priori the solution is not a measurable functional of the the driving noise, that is $X^x$ is not adapted to the filtration $\mathcal{F}=\{\mathcal{F}_{t}\}_{t\in[0,T]}$ generated by $B^{H}$.
\item In the next step, we approximate the generalized drift coefficient $\delta_{0}$ by the Gaussian
kernels $\varphi_{\varepsilon}$. Using classical Picard iteration, we
know that for each smooth coefficient $\varphi_{\varepsilon}$, $\varepsilon>0$,
there exists unique strong solution $X_{\cdot}^{\varepsilon}$ to
the SDE 
\begin{align}
dX_{t}^{\varepsilon}=\alpha\varphi_{\varepsilon}(X_{t}^{\varepsilon})\cdot \boldsymbol{1}_{d}dt+dB_{t}^{H},\,\,0\leq t\leq T,\,\,\,X_{0}^{\varepsilon}=x\in\mathbb{R}^{d}\,.\label{Xn}
\end{align}
Then we prove that for each $t\in[0,T]$ the family $\{X_{t}^{\varepsilon}\}_{\varepsilon>0}$ converges weakly as $\varepsilon \searrow 0$ to the conditional expectation $E[X_{t}|\mathcal{F}_{t}]$ in the space $L^{2}(\Omega;\mathcal{F}_{t})$ of square integrable,
$\mathcal{F}_{t}$-measurable random variables. 
\item Further, it is well known, see e.g. \cite{Nua10}, that for each $t\in[0,T]$
the strong solution $X_{t}^{\varepsilon}$, $\varepsilon>0$, is Malliavin
differentiable, and that the Malliavin derivative $D_{s}X_{t}^{\varepsilon}$,
$0\leq s\leq t$, with respect to $W$ in \eqref{WBH} solves the equation
\begin{align}\label{DXn1}
D_{s}X_{t}^{\varepsilon}=K_{H}(t,s)I_{d}+\int_{s}^{t}\alpha\varphi_{\varepsilon}'(X_{u}^{\varepsilon})\cdot \boldsymbol{1}_{d}D_{s}X_{u}^{\varepsilon}du,
\end{align}
where $\varphi_{\varepsilon}'$ denotes the Jacobian of $\varphi_{\varepsilon}$.
Using a compactness criterion based on Malliavin
calculus (see Appendix A) we then show that for every $t\in[0,T]$ the set of random variables
$\{X_{t}^{\varepsilon}\}_{\varepsilon > 0}$ is relatively compact in $L^{2}(\Omega)$,
which enables us to conclude that $X_{t}^{\varepsilon}$ converges
strongly as $\varepsilon \searrow 0$ in $L^{2}(\Omega;\mathcal{F}_{t})$ to $\mathbb{E}\left[X_{t}|\mathcal{F}_{t}\right]$.
As a consequence of the compactness criterion we also observe that $E[X_{t}|\mathcal{F}_{t}]$ is Malliavin differentiable. 
\item Finally, we prove that $\mathbb{E}\left[X_{t}|\mathcal{F}_{t}\right]=X_{t}$,
which entails that $X_{t}$ is $\mathcal{F}_{t}$-measurable and thus
a strong solution on our specific probability space, on which we assumed our weak solution. 
\end{enumerate}

Let us first have a look at step 1 of our programme, that is we want to construct weak
solutions of \eqref{SDE} by using Girsanov's theorem. Let $(\Omega,\mathfrak{A},\widetilde{P})$ be some
given probability space which carries a $d$-dimensional fractional
Brownian motion $\widetilde{B}^{H}$ with Hurst parameter $H\in(0,1/2)$
and set $X_{t}^{x}:=x+\widetilde{B}_{t}^{H}$, $t\in\left[0,T\right]$,
$x\in\mathbb{R}^{d}$. Set $\theta_{t}:=\left(K_{H}^{-1}\left(\int_{0}^{\cdot}\delta_{0}(X_{r}^{x})dr\boldsymbol{1}_{d}\right)\right)(t)$
and consider the Dol\'{e}ans-Dade exponential 
\begin{align*}
\xi_{t} & := \exp\left\{ \int_{0}^{t}\theta_{s}^{T}dW_{s}-\frac{1}{2}\int_{0}^{t}\theta_{s}^{T}\theta_{s}ds\right\} ,\quad t\in[0,T].
\end{align*}
formally.

If we were allowed to implement Girsanov's theorem in this setting we would arrive at the conclusion that the process
\begin{align}\label{weak}
B_{t}^{H} :=& X_{t}^{x}-x-\int_{0}^{t}\delta_{x}(X_{s}^{x})ds\boldsymbol{1}_{d}\\
= & \widetilde{B}_{t}^{H}-\int_{0}^{t}\delta_{0}(B_{s}^{H})ds\boldsymbol{1}_{d}\nonumber 
\end{align}
is a fractional Brownian motion on $(\Omega,\mathfrak{A},P)$ with
Hurst parameter $H\in(0,1/2)$, where $\frac{dP}{d\widetilde{P}}=\xi_{T}$.
Hence, because of \eqref{weak}, the couple $(X^{x},B^{H})$ will
be a weak solution of \ref{SDE} on $(\Omega,\mathfrak{A},P)$.

Therefore, in what follows we show that the requirements of Theorem \ref{girsanov} are accomplished.

\begin{lem}\label{expmom}
Let $x\in \mathbb{R}^{d}$. If $H<\frac{1}{2(1+d)}$ then%
\begin{equation*}
\sup_{\varepsilon >0}E[\exp (\mu \int_{0}^{T}(K_{H}^{-1}(\int_{0}^{.}\varphi
_{x,\varepsilon }(B_{u}^{H})du)(t))^{2}dt)]<\infty\label{eq:ExponentialMoments-1}
\end{equation*}
for all $\mu \in \mathbb{R},$ where%
\begin{equation*}
\varphi _{x,\varepsilon }(B_{u}^{H})=\frac{1}{(2\pi \varepsilon )^{\frac{d}{2%
}}}\exp (-\frac{\left\vert B_{u}^{H}-x\right\vert _{\mathbb{R}^{d}}^{2}}{%
2\varepsilon }).
\end{equation*}

\end{lem}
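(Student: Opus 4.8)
The plan is to reduce the exponential‑moment estimate to a moment bound of the type obtained in Section~3 via the integration‑by‑parts / local‑time machinery, and then to sum the resulting series. First I would use the explicit representation \eqref{inverseKH} of $K_H^{-1}$ on absolutely continuous paths: since $t\mapsto\int_0^t\varphi_{x,\varepsilon}(B_u^H)\,du$ is absolutely continuous with derivative $\varphi_{x,\varepsilon}(B_t^H)$, one has
\begin{equation*}
\bigl(K_H^{-1}(\textstyle\int_0^\cdot\varphi_{x,\varepsilon}(B_u^H)\,du)\bigr)(t)
= t^{H-\frac12}\,\bigl(I_{0^+}^{\frac12-H}\, s^{\frac12-H}\varphi_{x,\varepsilon}(B_s^H)\bigr)(t),
\end{equation*}
so that, writing out the Riemann–Liouville integral,
\begin{equation*}
\bigl(K_H^{-1}(\textstyle\int_0^\cdot\varphi_{x,\varepsilon}(B_u^H)\,du)\bigr)(t)
= \frac{t^{H-\frac12}}{\Gamma(\frac12-H)}\int_0^t (t-s)^{-\frac12-H}s^{\frac12-H}\varphi_{x,\varepsilon}(B_s^H)\,ds.
\end{equation*}
Because $\exp(\mu y^2)=\sum_{m\ge0}\mu^m y^{2m}/m!$, it suffices to bound $E[Z_\varepsilon(t)^{2m}]$ uniformly in $\varepsilon$, where $Z_\varepsilon(t)$ denotes the right‑hand side above, and then show the series $\sum_m \mu^m E[\sup_{t\le T}Z_\varepsilon(t)^{2m}]/m!$ converges with an $\varepsilon$‑independent bound.

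Next I would expand the $2m$‑th power: $Z_\varepsilon(t)^{2m}$ becomes a $2m$‑fold time integral over $[0,t]^{2m}$ of a product $\prod_{j=1}^{2m}(t-s_j)^{-\frac12-H}s_j^{\frac12-H}\varphi_{x,\varepsilon}(B_{s_j}^H)$, times the deterministic factor $t^{(2H-1)m}/\Gamma(\frac12-H)^{2m}$. Symmetrizing the $2m$ time variables reduces the integral to $(2m)!$ times an integral over the ordered simplex $\Delta_{0,t}^{2m}$, on which the product of kernel weights $\prod_j(t-s_j)^{-\frac12-H}s_j^{\frac12-H}$ is bounded by the "worst" ordered configuration; crucially each weight is of the form $\varkappa_j(s_j)$ as in \eqref{kappa} (a power of $s_j$ times a power of $t-s_j$), which is exactly the shape handled by Theorem~\ref{mainthmlocaltime} and Propositions~\ref{mainestimate1}–\ref{mainestimate2}. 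Applying the integration‑by‑parts bound \eqref{intestL} with $\alpha=0$ and $f_j=\varphi_{x,\varepsilon}$ (so $\|\varphi_{x,\varepsilon}\|_{L^1(\R^d;L^\infty([0,T]))}=1$, independently of $\varepsilon$ — this is the point of the $L^1$–$L^\infty$ norm, it kills the singularity of the approximate delta) gives $|E[Z_\varepsilon(t)^{2m}]| \le (2m)!\,C^{2m}\,(\Psi^{\varkappa}_0(0,t))^{1/2}$, and $\Psi^\varkappa_0$ is in turn estimated by an iterated‑Beta‑integral of the form appearing in the proof of Proposition~\ref{mainestimate1}, yielding something like $C^{m}\,t^{\rho m}/\Gamma(\kappa m)^{1/2}$ for explicit constants $\rho,\kappa>0$ provided the exponents $-H\,d + (H-\frac12)\ge -1$, i.e. precisely the condition $H<\frac{1}{2(1+d)}$ in the hypothesis (here $\alpha=0$, $|\alpha|=0$, so the restriction from Proposition~\ref{mainestimate1} reads $H<(\frac12-\gamma)/(d-1)$ for the $\varkappa$‑weight and, combined with the kernel singularity coming from $\det Q$, $H<\frac1{2(1+d)}$). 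I would also need the supremum over $t\in[0,T]$: either absorb it by using that the time‑integrand is nonnegative and monotone in $t$ up to the boundary weights, or apply a Garsia–Rodemich–Rumsey / Kolmogorov argument to $t\mapsto Z_\varepsilon(t)$, whose increments have moments controlled the same way.

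Finally, collecting the bound $E[\sup_{t\le T}Z_\varepsilon(t)^{2m}] \le (2m)!\,C^{2m}\,\dfrac{T^{\rho m}}{\Gamma(\kappa m)^{1/2}}$, I would plug this into the exponential series:
\begin{equation*}
\sup_{\varepsilon>0}E\Bigl[\exp\bigl(\mu\sup_{t\le T}Z_\varepsilon(t)^2\bigr)\Bigr]
\le \sum_{m\ge0}\frac{|\mu|^m}{m!}(2m)!\,C^{2m}\,\frac{T^{\rho m}}{\Gamma(\kappa m)^{1/2}}.
\end{equation*}
Using $(2m)!/m! \le 4^m m!$ and Stirling's asymptotics $m!/\Gamma(\kappa m)^{1/2} \sim$ (something growing slower than $c^m$ when $\kappa>2$, and otherwise controllable because the $\Gamma$ in the denominator has argument linear in $m$ with, after the square root, effectively exponent $\kappa m/2$), the ratio test shows the series converges for every $\mu\in\R$ as long as $\kappa\ge 2$, which is guaranteed by taking $\gamma$ small in the strict inequality $H<\frac1{2(1+d)}$. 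The main obstacle is the bookkeeping in the second step: verifying that the product of boundary weights $(t-s_j)^{-\frac12-H}s_j^{\frac12-H}$, after symmetrization to the simplex, really does fit the hypotheses of Theorem~\ref{mainthmlocaltime} (one must check the exponents $-H(d+2\sum\alpha)+(\text{weight exponent})>-1$ hold under $H<\frac1{2(1+d)}$, since the weight now carries a genuinely negative power $(t-s_j)^{-\frac12-H}$ on top of the $\det Q$ singularity), and then tracking the Gamma‑function denominator through the iterated integration to confirm it grows fast enough in $m$ to beat the combinatorial factor $(2m)!$. Uniformity in $\varepsilon$ is automatic once everything is phrased through the $L^1$‑norm of $\varphi_{x,\varepsilon}$, which equals $1$ for all $\varepsilon$.
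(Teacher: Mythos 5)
Your reduction is essentially the paper's: use \eqref{inverseKH} to write $K_H^{-1}(\int_0^\cdot\varphi_{x,\varepsilon}(B_u^H)\,du)(t)$ as $\frac{t^{H-1/2}}{\Gamma(1/2-H)}\int_0^t(t-s)^{-1/2-H}s^{1/2-H}\varphi_{x,\varepsilon}(B_s^H)\,ds$, expand the exponential into moments (where H\"older's inequality $(\int_0^Tf\,dt)^m\le T^{m-1}\int_0^Tf^m\,dt$ already reduces everything to pointwise-in-$t$ bounds, so the supremum over $t$ and the Garsia--Rodemich--Rumsey detour are unnecessary), write the $2m$-th power as $(2m)!$ times a simplex integral, bound $E[\prod_j\varphi_{x,\varepsilon}(B_{s_j}^H)]$ by $(2\pi)^{-dm}(\det Q(\mathbf{s}))^{-d/2}$ uniformly in $\varepsilon$, and control $(\det Q)^{-d/2}$ by local nondeterminism. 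However, the specific machinery you propose to invoke does not cover the weights that actually arise. Theorem \ref{mainthmlocaltime}, Propositions \ref{mainestimate1}--\ref{mainestimate2} and Lemma \ref{VI_iterativeInt} are built for weights $\varkappa_j(s)=K_H(s,\theta)^{\varepsilon_j}$, singular at the \emph{lower} endpoint of the simplex; the weight $(t-s)^{-\frac12-H}$ coming from $K_H^{-1}$ is singular at the \emph{upper} endpoint $s=t$, exactly where the innermost increment factor $(s_{2m}-s_{2m-1})^{-Hd}$ also blows up. The paper therefore does not route this lemma through Section 3 at all: it uses self-similarity to rescale to $[0,1]$ (which isolates the prefactor $t^{2m(\frac12-H(1+d))}$ and handles $\varepsilon$ via $\varepsilon(t)=\varepsilon t^{-2H}$) and then invokes the dedicated combinatorial estimate of Lemma \ref{lem:Integral}.

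The more serious point is that the step you defer as ``the main obstacle'' is where the proof actually lives. Finiteness of each iterated Beta integral (the condition $\frac12-H(1+d)>0$, i.e. $H<\frac{1}{2(1+d)}$ --- note your displayed exponent condition ``$-Hd+(H-\frac12)\ge-1$'' does not reduce to this; the correct combination is $\frac12+H+Hd<1$, from integrating $(1-u)^{-\frac12-H}(1-u)^{-Hd}$ near $u=1$) is not enough: one must show that the $2m$-th moment is $O(C^m(m!)^{\kappa})$ with $\kappa<1$ so that $\sum_m\frac{|\mu|^m}{m!}(\cdot)$ converges for \emph{every} $\mu$. The crude estimate obtained by dominating each $(t-s_j)^{-\frac12-H}$ by the adjacent increment singularity and iterating yields only $C^m(m!)^{1+2H(1+d)}$, for which the exponential series diverges; the gain of a full factor of $m!$ comes from the fact that the $(t-s_j)$-singularities essentially factorize over the simplex, which is precisely what Lemma \ref{lem:Integral} (giving $C^m(m!)^{2H(1+d)}$) encodes and what makes $2H(1+d)<1$ the right hypothesis. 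Your criterion ``$\kappa\ge2$'' has the correct shape, but asserting that it is ``guaranteed by taking $\gamma$ small'' assumes exactly the estimate that needs to be proved.
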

\begin{proof}
In order to prove Lemma \ref{expmom}, we can write
\begin{eqnarray*}
K_{H}^{-1}\left(\int_{0}^{\cdot}\varphi _{x,\varepsilon }(B_{r}^{H})dr\right)(t) & = & t^{H-\frac{1}{2}}I_{0+}^{\frac{1}{2}-H}t^{\frac{1}{2}-H}\left(\int_{0}^{\cdot}\varphi _{x,\varepsilon }(B_{r}^{H})dr\right)^{\prime}\left(t\right)\\
 & = & t^{H-\frac{1}{2}}\int_{0}^{t}\gamma_{-\frac{1}{2}-H,\frac{1}{2}-H}(t,u)\varphi _{x,\varepsilon }(B_{u}^{H})du,
\end{eqnarray*}
where 
\[
\gamma_{\alpha,\beta}(t,u)=\left(t-u\right)^{\alpha}u^{\beta}.
\]
Using the self-similarity of the fBm we can write%
\begin{equation*}
K_{H}^{-1}(\int_{0}^{.}\varphi _{x,\varepsilon }(B_{r}^{H})dr)(t)\overset{law%
}{=}t^{\frac{1}{2}-H(1+d)}\int_{0}^{1}\gamma _{-\frac{1}{2}-H,\frac{1}{2}%
-H}(1,u)\varphi _{xt^{-H},\varepsilon (t)}(B_{u}^{H})du,
\end{equation*}%
where $\varepsilon (t):=\varepsilon t^{-2H}$, and hence%
\begin{eqnarray*}
&&K_{H}^{-1}(\int_{0}^{.}\varphi _{x,\varepsilon }(B_{r}^{H})dr)^{2m}(t)%
\overset{law}{=}t^{2m(\frac{1}{2}-H(1+d))}(\int_{0}^{1}\gamma _{-\frac{1}{2}-H,\frac{1}{%
2}-H}(1,u)\varphi _{xt^{-H},\varepsilon (t)}(B_{u}^{H})du)^{2m} \\
&=&t^{2m(\frac{1}{2}-H(1+d))}(2m)!\int_{\mathcal{T}_{2m}(0,1)}\prod_{j=1}^{2m}\gamma
_{-\frac{1}{2}-H,\frac{1}{2}-H}(1,u_{j})\varphi _{xt^{-H},\varepsilon
(t)}(B_{u_{j}}^{H})d\mathbf{u,}
\end{eqnarray*}%
where $\mathcal{T}_{n}(0,s)=\{0\leq u_{1}<u_{2}<...<u_{n}\leq s\}$ and 
\begin{equation*}
\varphi _{xt^{-H},\varepsilon (t)}(B_{u_{j}}^{H})=\frac{1}{(2\pi )^{d}}\int_{%
\mathbb{R}^{d}}\exp (i\left\langle \xi ,B_{u_{j}}^{H}-xt^{-H}\right\rangle _{%
\mathbb{R}^{d}}-\varepsilon (t)\frac{\left\vert \xi \right\vert _{\mathbb{R}%
^{d}}^{2}}{2})d\xi .
\end{equation*}%
Then%
\begin{eqnarray*}
&&\mathbb{E}[(\int_{0}^{T}(K_{H}^{-1}(\int_{0}^{.}\varphi _{x,\varepsilon
}(B_{u}^{H})du)(t))^{2}dt)^{m}] \\
&\leq &T^{m-1}\int_{0}^{T}E[K_{H}^{-1}(\int_{0}^{.}\varphi _{x,\varepsilon
}(B_{r}^{H})dr)^{2m}(t)]dt \\
&=&T^{m-1}\int_{0}^{T}t^{2m(\frac{1}{2}-H(1+d))}(2m)!\int_{\mathcal{T}%
_{2m}(0,1)}(\prod_{j=1}^{2m}\gamma _{-\frac{1}{2}-H,\frac{1}{2}%
-H}(1,u_{j}))E[\prod_{j=1}^{2m}\varphi _{xt^{-H},\varepsilon
(t)}(B_{u_{j}}^{H})]d\mathbf{u}dt.
\end{eqnarray*}%
Moreover,%
\begin{eqnarray*}
&&\mathbb{E}[\prod_{j=1}^{2m}\varphi _{xt^{-H},\varepsilon
(t)}(B_{u_{j}}^{H})] \\
&=&\frac{1}{(2\pi )^{2dm}}\mathbb{E}[\prod_{j=1}^{2m}\int_{\mathbb{R}%
^{d}}\exp (i\left\langle \xi _{j},B_{u_{j}}^{H}-xt^{-H}\right\rangle _{%
\mathbb{R}^{d}}-\varepsilon (t)\frac{\left\vert \xi _{j}\right\vert _{%
\mathbb{R}^{d}}^{2}}{2})d\xi _{j}] \\
&=&\frac{1}{(2\pi )^{2dm}}\int_{\mathbb{R}^{2dm}}E[\exp
(i\sum_{j=1}^{2m}\left\langle \xi _{j},B_{u_{j}}^{H}\right\rangle _{\mathbb{R%
}^{d}})] \\
&&\times \exp (-i\sum_{j=1}^{2m}\left\langle \xi _{j},xt^{-H}\right\rangle _{%
\mathbb{R}^{d}}) \\
&&\times \exp (-\frac{\varepsilon (t)}{2}\sum_{j=1}^{2m}\left\vert \xi
_{j}\right\vert _{\mathbb{R}^{d}}^{2})d\xi _{1}...d\xi _{2m}.
\end{eqnarray*}%
Next note that%
\begin{eqnarray*}
&&\mathbb{E}[\exp (i\sum_{j=1}^{2m}\left\langle \xi
_{j},B_{u_{j}}^{H}\right\rangle _{\mathbb{R}^{d}})] \\
&=&\exp (-\frac{1}{2}Var\left[ \sum_{j=1}^{2m}\sum_{k=1}^{d}\xi
_{j}^{k}B_{u_{j}}^{H,k}\right] ) \\
&=&\exp (-\frac{1}{2}\sum_{k=1}^{d}Var\left[ \sum_{j=1}^{2m}\xi
_{j}^{k}B_{u_{j}}^{H,k}\right] ) \\
&=&\exp (-\frac{1}{2}\sum_{k=1}^{d}\left\langle \xi ^{k},Q(\mathbf{u})\xi
^{k}\right\rangle _{\mathbb{R}^{2m}}),
\end{eqnarray*}%
where%
\begin{equation*}
Q(\mathbf{u})=Cov(B_{u_{1}}^{H,1},...,B_{u_{2m}}^{H,1}).
\end{equation*}%
Hence,%
\begin{eqnarray*}
&&\mathbb{E}[\prod_{j=1}^{2m}\varphi _{xt^{-H},\varepsilon
(t)}(B_{u_{j}}^{H})] \\
&\leq &\frac{1}{(2\pi )^{2dm}}\int_{\mathbb{R}^{2dm}}\exp (-\frac{1}{2}Var%
\left[ \sum_{j=1}^{2m}\sum_{k=1}^{d}\xi _{j}^{k}B_{u_{j}}^{H,k}\right] )d\xi
_{1}...d\xi _{2m} \\
&=&\frac{1}{(2\pi )^{2dm}}\int_{\mathbb{R}^{2dm}}\exp (-\frac{1}{2}%
\sum_{k=1}^{d}\left\langle \xi ^{k},Q(\mathbf{u})\xi ^{k}\right\rangle _{%
\mathbb{R}^{2m}})d\xi _{1}...d\xi _{2m} \\
&=&\frac{1}{(2\pi )^{2dm}}(\int_{\mathbb{R}^{2m}}\exp (-\frac{1}{2}%
\sum_{k=1}^{d}\left\langle \xi ^{1},Q(\mathbf{u})\xi ^{1}\right\rangle _{%
\mathbb{R}^{2m}})d\xi ^{1})^{d} \\
&\leq &\frac{1}{(2\pi )^{dm}}(\det Q(\mathbf{u}))^{-\frac{d}{2}}.
\end{eqnarray*}%
Using the last estimate, we get that%
\begin{eqnarray*}
&&\mathbb{E}[(\int_{0}^{T}(K_{H}^{-1}(\int_{0}^{.}\varphi _{x,\varepsilon
}(B_{u}^{H})du)(t))^{2}dt)^{m}] \\
&\leq &\frac{T^{m-1}}{(2\pi )^{dm}}T^{2m(\frac{1}{2}-H(1+d))} \\
&&\times (2m)!\int_{\mathcal{T}_{2m}(0,1)}(\prod_{j=1}^{2m}\gamma _{-%
\frac{1}{2}-H,\frac{1}{2}-H}(1,u_{j}))(\det Q(\mathbf{u}))^{-\frac{d}{2}}d%
\mathbf{u} \\
&\leq &\frac{T^{m-1}}{(2\pi )^{dm}}T^{2m(\frac{1}{2}-H(1+d))} \\
&&\times C_{H,d}^{m}(m!)^{2H(1+d)},
\end{eqnarray*}%
where the last bound is due to Lemma A.5 for a constant $C_{H,d}$ only
depending on $H$ and $d$. So the result follows.

\end{proof}

\begin{prop}\label{weaksolution}
Let $x\in \mathbb{R}^{d}$ and $H<\frac{1}{2(1+d)}$. Then there exists a $%
X\in L^{p}(\Omega )$ such that 
\begin{equation*}
\mathcal{E}(\int_{0}^{T}K_{H}^{-1}(\int_{0}^{\cdot }\varphi
_{x,1/n}(B_{u}^{H})\boldsymbol{1}_{d}du)^{\ast }(s)dW_{s})\underset{%
n\longrightarrow \infty }{\longrightarrow }X\text{ in }L^{p}(\Omega )
\end{equation*}%
for all $p\geq 1$. Furthermore,%
\begin{equation*}
B_{t}^{H}-L_{t}^{x}(B^{H})\boldsymbol{1}_{d},0\leq t\leq T
\end{equation*}%
is a fractional Brownian motion with Hurst parameter $H$ under the change of
measure with respect to the Radon-Nikodym-derivative $X$.
\end{prop}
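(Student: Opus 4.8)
The plan is to combine the uniform exponential moment estimate of Lemma~\ref{expmom} with an $L^{2}(\Omega\times[0,T])$-Cauchy argument for the integrands, and then to pass to the limit in Girsanov's theorem (Theorem~\ref{girsanov}) applied to the mollified drifts. Write $M_{n}:=\int_{0}^{T}K_{H}^{-1}\big(\int_{0}^{\cdot }\varphi _{x,1/n}(B_{u}^{H})\boldsymbol{1}_{d}du\big)^{\ast }(s)dW_{s}$, so that $\langle M_{n}\rangle _{T}=\int_{0}^{T}\big|K_{H}^{-1}\big(\int_{0}^{\cdot }\varphi _{x,1/n}(B_{u}^{H})\boldsymbol{1}_{d}du\big)(s)\big|_{\mathbb{R}^{d}}^{2}ds$. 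First I would invoke Lemma~\ref{expmom} (available since $H<\frac{1}{2(1+d)}$) to get $\sup_{n}E[\exp(\mu \langle M_{n}\rangle _{T})]<\infty $ for every $\mu \in \mathbb{R}$. Taking $\mu =\frac{1}{2}$, Novikov's criterion makes each $\mathcal{E}(M_{n})$ a genuine martingale with $E[\mathcal{E}(M_{n})]=1$, and likewise $\mathcal{E}(2pM_{n})$ a martingale for every $p\geq 1$. From the identity $\mathcal{E}(M_{n})^{p}=\mathcal{E}(2pM_{n})^{1/2}\exp\big((p^{2}-\frac{p}{2})\langle M_{n}\rangle _{T}\big)$ and Cauchy--Schwarz one then obtains $\sup_{n}E[\mathcal{E}(M_{n})^{p}]\leq \sup_{n}E[\exp((2p^{2}-p)\langle M_{n}\rangle _{T})]^{1/2}<\infty $, so $\{\mathcal{E}(M_{n})\}_{n}$ is bounded in every $L^{p}(\Omega)$.

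Next, using the representation (cf.~\eqref{inverseKH} and the self-similarity step in the proof of Lemma~\ref{expmom})
\begin{equation*}
K_{H}^{-1}\Big(\int_{0}^{\cdot }\varphi _{x,\varepsilon }(B_{r}^{H})dr\Big)(t)=t^{H-\frac{1}{2}}\int_{0}^{t}\gamma _{-\frac{1}{2}-H,\frac{1}{2}-H}(t,u)\,\varphi _{x,\varepsilon }(B_{u}^{H})\,du,
\end{equation*}
which is linear in $\varphi _{x,\varepsilon }$, I would estimate $E\big[\big|K_{H}^{-1}\big(\int_{0}^{\cdot }(\varphi _{x,\varepsilon }-\varphi _{x,\varepsilon ^{\prime }})(B_{r}^{H})dr\big)(t)\big|^{2}\big]$ by the Fourier method of Lemma~\ref{expmom}. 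The resulting integral over $\mathcal{T}_{2}(0,t)$ has an integrand that, at fixed $(u_{1},u_{2})$ with $u_{1}\neq u_{2}$, tends to $0$ as $\varepsilon ,\varepsilon ^{\prime }\searrow 0$ (the Fourier weights $e^{-\varepsilon |\xi |^{2}/2}$ and $e^{-\varepsilon ^{\prime }|\xi |^{2}/2}$ share the limit $1$) and is dominated by $4(2\pi )^{-d}\gamma _{-\frac{1}{2}-H,\frac{1}{2}-H}(t,u_{1})\gamma _{-\frac{1}{2}-H,\frac{1}{2}-H}(t,u_{2})(\det Q(\mathbf{u}))^{-d/2}$, $\mathbf{u}=(u_{1},u_{2})$, whose integral over $(t,u_{1},u_{2})$ is finite by the estimate used in the proof of Lemma~\ref{expmom}. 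Dominated convergence then shows that $\{K_{H}^{-1}(\int_{0}^{\cdot }\varphi _{x,1/n}(B_{r}^{H})dr)\}_{n}$ is Cauchy, hence convergent, in $L^{2}(\Omega\times[0,T])$. Since also $\int_{0}^{\cdot }\varphi _{x,1/n}(B_{r}^{H})dr=L_{\cdot }^{x}(B^{H},1/n)\to L_{\cdot }^{x}(B^{H})$ in $L^{2}(\Omega\times[0,T])$ (Section~4) and $K_{H}$ maps $L^{2}([0,T])$ boundedly into itself, the limit is identified as $K_{H}^{-1}(L_{\cdot }^{x}(B^{H}))$, and in particular $L_{\cdot }^{x}(B^{H})\boldsymbol{1}_{d}\in I_{0+}^{H+\frac{1}{2}}(L^{2})$ almost surely. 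By the It\^{o} isometry it follows that $M_{n}\to \int_{0}^{T}K_{H}^{-1}(L_{\cdot }^{x}(B^{H})\boldsymbol{1}_{d})^{\ast }(s)dW_{s}$ in $L^{2}(\Omega)$ and $\langle M_{n}\rangle _{T}\to \int_{0}^{T}|K_{H}^{-1}(L_{\cdot }^{x}(B^{H})\boldsymbol{1}_{d})(s)|^{2}ds$ in $L^{1}(\Omega)$, hence $\mathcal{E}(M_{n})\to X:=\mathcal{E}\big(\int_{0}^{T}K_{H}^{-1}(L_{\cdot }^{x}(B^{H})\boldsymbol{1}_{d})^{\ast }(s)dW_{s}\big)$ in probability.

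Convergence in probability together with the uniform $L^{p+1}$-bound above makes $\{\mathcal{E}(M_{n})^{p}\}_{n}$ uniformly integrable, so $\mathcal{E}(M_{n})\to X$ in $L^{p}(\Omega)$ for every $p\geq 1$; thus $X\in\bigcap_{p\geq 1}L^{p}(\Omega)$ and $E[X]=\lim_{n}E[\mathcal{E}(M_{n})]=1$, which yields the first assertion. For the second, I would apply Theorem~\ref{girsanov} to the adapted drift $\int_{0}^{\cdot }(-\varphi _{x,1/n}(B_{s}^{H}))\boldsymbol{1}_{d}ds$: hypothesis (i) holds because this $C^{1}$ process lies in $I_{0+}^{H+\frac{1}{2}}(L^{2})$ by \eqref{inverseKH}, and hypothesis (ii) holds because the associated exponential density is exactly $\mathcal{E}(M_{n})$, of mean one. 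Hence $B_{t}^{H}-\int_{0}^{t}\varphi _{x,1/n}(B_{s}^{H})\boldsymbol{1}_{d}ds$ is an $\mathcal{F}$-fractional Brownian motion with Hurst parameter $H$ under $P_{n}:=\mathcal{E}(M_{n})\cdot P$. Since $\mathcal{E}(M_{n})\to X$ in $L^{1}(\Omega)$ one has $\|P_{n}-P^{\prime}\|_{TV}\to 0$ with $P^{\prime}:=X\cdot P$, while $\int_{0}^{t}\varphi _{x,1/n}(B_{s}^{H})ds\to L_{t}^{x}(B^{H})$ in $P$-probability, hence in $P^{\prime}$-probability; passing to the limit in the finite-dimensional characteristic functions and using path-continuity of $B^{H}$ and of $t\mapsto L_{t}^{x}(B^{H})$, we conclude that $B_{t}^{H}-L_{t}^{x}(B^{H})\boldsymbol{1}_{d}$ is a fractional Brownian motion with Hurst parameter $H$ under $P^{\prime}$. (Equivalently, reading Theorem~\ref{girsanov} with the integrated drift $-L_{\cdot}^{x}(B^{H})\boldsymbol{1}_{d}$ as the primitive datum, conditions (i) and (ii) follow from $L_{\cdot}^{x}(B^{H})\boldsymbol{1}_{d}\in I_{0+}^{H+\frac{1}{2}}(L^{2})$ a.s.\ and $E[X]=1$, which in addition gives the $\mathcal{F}$-fractional Brownian motion property.)

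I expect the main obstacle to be the $L^{2}$-Cauchy estimate, i.e.\ showing that the regularised weighted occupation integrals $t^{H-\frac{1}{2}}\int_{0}^{t}\gamma _{-\frac{1}{2}-H,\frac{1}{2}-H}(t,u)\varphi _{x,1/n}(B_{u}^{H})du$ converge in $L^{2}(\Omega\times[0,T])$. This is precisely where the hypothesis $H<\frac{1}{2(1+d)}$ enters: it guarantees that the singular weight $(t-u)^{-\frac{1}{2}-H}$ paired with $(\det Q(\mathbf{s}))^{-d/2}$ (which is comparable to $\prod_{j}|s_{j}-s_{j-1}|^{-Hd}$) remains integrable on the relevant simplex, so that the limit can be taken by dominated convergence with the majorant furnished by the proof of Lemma~\ref{expmom}.
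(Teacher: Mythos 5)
Your proposal is correct and rests on the same two pillars as the paper's proof: the uniform exponential moment bound of Lemma \ref{expmom} (which requires $H<\frac{1}{2(1+d)}$), and the Fourier/self-similarity computation with the determinant bound of Lemma \ref{lem:Integral} as dominating function, which yields convergence of the integrands $K_{H}^{-1}(\int_{0}^{\cdot}\varphi_{x,1/n}(B_{u}^{H})du)$. Where you diverge is in the final limiting step: the paper establishes that $\{\mathcal{E}(M_{n})\}_{n}$ is Cauchy in $L^{1}(\Omega)$ directly, via the elementary inequality $\left\vert e^{x}-e^{y}\right\vert \leq \left\vert x-y\right\vert e^{x+y}$ together with H\"older and the supermartingale property, which forces it to control fourth moments of the integrand differences (the term $I_{2}$ there involves squared $L^{2}([0,T])$-norms); you instead deduce $\mathcal{E}(M_{n})\rightarrow X$ in probability from $L^{2}(\Omega\times[0,T])$-convergence of the integrands alone (It\^o isometry for $M_{n}$, Cauchy--Schwarz for $\langle M_{n}\rangle_{T}$) and then upgrade to $L^{p}$ by uniform integrability from the uniform $L^{p+1}$-bounds. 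Your route is slightly more economical on the moment computations and makes the identification of the limit as $\mathcal{E}(\int_{0}^{T}K_{H}^{-1}(L_{\cdot}^{x}(B^{H})\boldsymbol{1}_{d})^{\ast}(s)dW_{s})$ explicit, which the paper leaves implicit; the paper's route produces a quantitative Cauchy estimate. For the second assertion both arguments pass to the limit in the finite-dimensional characteristic functions under the approximating measures, yours spelling out the total-variation convergence $\|P_{n}-P^{\prime}\|_{TV}\rightarrow 0$ that the paper compresses into one sentence. No gaps.
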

\begin{proof}
Without loss of generality let $p=1$. Then using $\left\vert
e^{x}-e^{y}\right\vert \leq \left\vert x-y\right\vert e^{x+y}$, H\"{o}lder's
inequality, the supermartingale property of Doleans-Dade exponentials we get
in connection with the previous Lemma that%
\begin{eqnarray*}
&&E[\left\vert \mathcal{E}(\int_{0}^{T}K_{H}^{-1}(\int_{0}^{\cdot }\varphi
_{x,1/n}(B_{u}^{H})\boldsymbol{1}_{d}du)^{\ast }(s)dW_{s})-\mathcal{E}%
(\int_{0}^{T}K_{H}^{-1}(\int_{0}^{\cdot }\varphi _{x,1/r}(B_{u}^{H})%
\boldsymbol{1}_{d}du)^{\ast }(s)dW_{s})\right\vert ] \\
&\leq &C(I_{1}+I_{2})E,
\end{eqnarray*}%
where%
\begin{equation*}
I_{1}:=E[\int_{0}^{T}\left\vert K_{H}^{-1}(\int_{0}^{\cdot }\varphi
_{1/n}(B_{u}^{H})\boldsymbol{1}_{d}du)^{\ast }(s)-K_{H}^{-1}(\int_{0}^{\cdot
}\varphi _{1/r}(B_{u}^{H})\boldsymbol{1}_{d}du)^{\ast }(s)\right\vert
^{2}ds]^{1/2},
\end{equation*}%
\begin{equation*}
I_{2}:=E[(\int_{0}^{T}\left\vert K_{H}^{-1}(\int_{0}^{\cdot }\varphi
_{1/n}(B_{u}^{H})\boldsymbol{1}_{d}du)^{\ast }(s)\right\vert
^{2}ds-\int_{0}^{T}\left\vert K_{H}^{-1}(\int_{0}^{\cdot }\varphi
_{1/r}(B_{u}^{H})\boldsymbol{1}_{d}du)^{\ast }(s)\right\vert
^{2}ds)^{2}]^{1/2}
\end{equation*}%
\begin{eqnarray*}
E &:&=E[\exp \{\mu _{1}\int_{0}^{t}\left\vert K_{H}^{-1}(\int_{0}^{\cdot
}\varphi _{1/n}(B_{u}^{H})\boldsymbol{1}_{d}du)^{\ast }(s)\right\vert
^{2}ds\}]^{1/4} \\
&&\cdot E[\exp \{\mu _{2}\int_{0}^{t}\left\vert K_{H}^{-1}(\int_{0}^{\cdot
}\varphi _{1/r}(B_{u}^{H})\boldsymbol{1}_{d}du)^{\ast }(s)\right\vert
^{2}ds\}]^{1/4}
\end{eqnarray*}%
for constants $C,\mu _{1},\mu _{2}>0$.

Now, let us have a look at the proof of the previous Lemma and adopt the
notation therein. In the sequel we omit $\boldsymbol{1}_{d}$. Then we obtain
for $m=1$ by using the self-similarity of the fBm in a similar way (but
under expectation) that%
\begin{eqnarray*}
&&E[\left\vert K_{H}^{-1}(\int_{0}^{\cdot }\varphi _{\varepsilon
_{1}}(B_{u}^{H})du)^{\ast }(t)\right\vert ^{2}\left\vert
K_{H}^{-1}(\int_{0}^{\cdot }\varphi _{\varepsilon _{2}}(B_{u}^{H})du)^{\ast
}(t)\right\vert ^{2}] \\
&=&E[(t^{2m(\frac{1}{2}-H(1+d))}(2m)!)^{2}\int_{\mathcal{T}_{2m}(0,1)}\prod_{j=1}^{2m}\gamma _{-\frac{1}{2}-H,\frac{1}{2}-H}(1,u_{j})\varphi
_{xt^{-H},\varepsilon _{1}(t)}(B_{u_{j}}^{H})d\mathbf{u} \\
&&\times \int_{\mathcal{T}_{2m}(0,1)}\prod_{j=1}^{2m}\gamma _{-\frac{%
1}{2}-H,\frac{1}{2}-H}(1,u_{j})\varphi _{xt^{-H},\varepsilon
_{2}(t)}(B_{u_{j}}^{H})d\mathbf{u],}
\end{eqnarray*}%
where $\varepsilon _{i}(t)=\varepsilon _{i}t^{-2H},$ $i=1,2.$ Using
shuffling (see Section 2.2), we get that%
\begin{eqnarray*}
&&E[\left\vert K_{H}^{-1}(\int_{0}^{\cdot }\varphi _{\varepsilon
_{1}}(B_{u}^{H})du)^{\ast }(t)\right\vert ^{2}\left\vert
K_{H}^{-1}(\int_{0}^{\cdot }\varphi _{\varepsilon _{2}}(B_{u}^{H})du)^{\ast
}(t)\right\vert ^{2}] \\
&=&E[(t^{2m(\frac{1}{2}-H(1+d))}(2m)!)^{2} \\
&&\times \sum_{\sigma \in S(2m,2m)}\int_{\mathcal{T}_{4m}(0,1)}\prod_{j=1}^{4m}f_{\sigma (j)}(u_{j})d\mathbf{u}],
\end{eqnarray*}%
where $f_{j}(s):=\gamma _{-\frac{1}{2}-H,\frac{1}{2}-H}(1,s)\varphi
_{xt^{-H},\varepsilon _{1}(t)}(B_{s}^{H}),$ if $j=1,...,2m$ and $\gamma _{-%
\frac{1}{2}-H,\frac{1}{2}-H}(1,s)\varphi _{xt^{-H},\varepsilon
_{2}(t)}(B_{s}^{H}),$ if $j=2m+1,...,4m$. Without loss of generality,
consider the case  
\begin{eqnarray*}
&&\prod_{j=1}^{4m}f_{\sigma (j)}(u_{j}) \\
&=&\prod_{j=1}^{2m}\gamma _{-\frac{1}{2}-H,\frac{1}{2}%
-H}(1,u_{j})\varphi _{xt^{-H},\varepsilon _{1}(t)}(B_{u_{j}}^{H}) \\
&&\times \prod_{j=2m+1}^{4m}\gamma _{-\frac{1}{2}-H,\frac{1}{2}%
-H}(1,u_{j})\varphi _{xt^{-H},\varepsilon _{2}(t)}(B_{u_{j}}^{H}).
\end{eqnarray*}%
Then%
\begin{eqnarray*}
&&E[(t^{2m(\frac{1}{2}-H(1+d))}(2m)!)^{2}\int_{\mathcal{T}_{4m}(0,1)}\prod_{j=1}^{4m}f_{\sigma (j)}(u_{j})d\mathbf{u}] \\
&=&(t^{2m(\frac{1}{2}-H(1+d))}(2m)!)^{2} \\
&&\times \int_{\mathcal{T}_{4m}(0,1)}\prod_{j=1}^{2m}\gamma _{-\frac{%
1}{2}-H,\frac{1}{2}-H}(1,u_{j})\prod_{j=2m+1}^{4m}\gamma _{-\frac{1}{%
2}-H,\frac{1}{2}-H}(1,u_{j}) \\
&&\times E[\prod_{j=1}^{2m}\varphi _{xt^{-H},\varepsilon
_{1}(t)}(B_{u_{j}}^{H})\prod_{j=2m+1}^{4m}\varphi
_{xt^{-H},\varepsilon _{2}(t)}(B_{u_{j}}^{H})]d\mathbf{u} \\
&=&(t^{2m(\frac{1}{2}-H(1+d))}(2m)!)^{2}\int_{\mathcal{T}_{4m}(0,1)}\prod_{j=1}^{4m}\gamma _{-\frac{1}{2}-H,\frac{1}{2}-H}(1,u_{j}) \\
&&\times E[\prod_{j=1}^{2m}\int_{\mathbb{R}^{d}}\exp (i\left\langle
\xi _{j},B_{u_{j}}^{H}-xt^{-H}\right\rangle _{\mathbb{R}^{d}}-\varepsilon
_{1}(t)\frac{\left\vert \xi _{j}\right\vert _{\mathbb{R}^{d}}^{2}}{2})d\xi
_{j} \\
&&\times \prod_{j=2m+1}^{4m}\int_{\mathbb{R}^{d}}\exp (i\left\langle
\xi _{j},B_{u_{j}}^{H}-xt^{-H}\right\rangle _{\mathbb{R}^{d}}-\varepsilon
_{2}(t)\frac{\left\vert \xi _{j}\right\vert _{\mathbb{R}^{d}}^{2}}{2})d\xi
_{j}]d\mathbf{u} \\
&=&(t^{2m(\frac{1}{2}-H(1+d))}(2m)!)^{2} \\
&&\times \int_{\mathcal{T}_{4m}(0,1)}\prod_{j=1}^{4m}\gamma _{-\frac{%
1}{2}-H,\frac{1}{2}-H}(1,u_{j}) \\
&&\times \frac{1}{(2\pi )^{4dm}}\int_{\mathbb{R}^{4dm}}E[\exp
(i\sum_{j=1}^{4m}\left\langle \xi _{j},B_{u_{j}}^{H}\right\rangle _{\mathbb{R%
}^{d}})] \\
&&\times \exp (-i\sum_{j=1}^{4m}\left\langle \xi _{j},xt^{-H}\right\rangle _{%
\mathbb{R}^{d}})\exp (-\frac{\varepsilon _{1}(t)}{2}\sum_{j=1}^{2m}\left%
\vert \xi _{j}\right\vert _{\mathbb{R}^{d}}^{2}-\frac{\varepsilon _{2}(t)}{2}%
\sum_{j=1}^{2m}\left\vert \xi _{j}\right\vert _{\mathbb{R}^{d}}^{2}) \\
&&d\xi _{1}...d\xi _{4m}d\mathbf{u}.
\end{eqnarray*}%
So%
\begin{eqnarray*}
&&E[(t^{2m(\frac{1}{2}-H(1+d))}(2m)!)^{2}\int_{\mathcal{T}_{4m}(0,1)}\prod_{j=1}^{4m}f_{\sigma (j)}(u_{j})d\mathbf{u}] \\
&=&(t^{2m(\frac{1}{2}-H(1+d))}(2m)!)^{2}\int_{\mathcal{T}_{4m}(0,1)}\prod_{j=1}^{4m}\gamma _{-\frac{1}{2}-H,\frac{1}{2}-H}(1,u_{j}) \\
&&\times \frac{1}{(2\pi )^{4dm}}\int_{\mathbb{R}^{4dm}}\exp (-\frac{1}{2}%
\sum_{k=1}^{d}\left\langle \xi ^{k},Q(\mathbf{u})\xi ^{k}\right\rangle _{%
\mathbb{R}^{4m}}) \\
&&\times \exp (-i\sum_{j=1}^{4m}\left\langle \xi _{j},xt^{-H}\right\rangle _{%
\mathbb{R}^{d}})\exp (-\frac{\varepsilon _{1}(t)}{2}\sum_{j=1}^{2m}\left%
\vert \xi _{j}\right\vert _{\mathbb{R}^{d}}^{2}-\frac{\varepsilon _{2}(t)}{2}%
\sum_{j=1}^{2m}\left\vert \xi _{j}\right\vert _{\mathbb{R}^{d}}^{2}) \\
&&d\xi _{1}...d\xi _{4m}d\mathbf{u}.
\end{eqnarray*}%
Hence, using dominated convergence in connection with Lemma A.5, we see that%
\begin{eqnarray*}
&&\int_{0}^{T}E[(t^{2m(\frac{1}{2}-H(1+d))}(2m)!)^{2}\int_{\mathcal{T}%
_{4m}(0,1)}\prod_{j=1}^{4m}f_{\sigma (j)}(u_{j})d\mathbf{u}]dt \\
&\longrightarrow &\int_{0}^{T}(t^{2m(\frac{1}{2}-H(1+d))}(2m)!)^{2}\int_{\mathcal{T}%
_{4m}(0,1)}\prod_{j=1}^{4m}\gamma _{-\frac{1}{2}-H,\frac{1}{2}%
-H}(1,u_{j}) \\
&&\times \frac{1}{(2\pi )^{4dm}}\int_{\mathbb{R}^{4dm}}\exp (-\frac{1}{2}%
\sum_{k=1}^{d}\left\langle \xi ^{k},Q(\mathbf{u})\xi ^{k}\right\rangle _{%
\mathbb{R}^{4m}}) \\
&&\exp (-i\sum_{j=1}^{4m}\left\langle \xi _{j},xt^{-H}\right\rangle _{%
\mathbb{R}^{d}})d\xi _{1}...d\xi _{4m}d\mathbf{u}dt
\end{eqnarray*}%
for $\varepsilon _{1},\varepsilon _{2}\searrow 0$. For other $\sigma \in
S(2m,2m)$, we obtain similar limit values. In summary, we find (by also
considering the case $\varepsilon _{1}=\varepsilon _{2}$) that%

\begin{equation*}
E[(\int_{0}^{T}\left\vert K_{H}^{-1}(\int_{0}^{\cdot }\varphi _{\varepsilon
_{1}}(B_{u}^{H})du)(s)-K_{H}^{-1}(\int_{0}^{\cdot }\varphi _{\varepsilon
_{2}}(B_{u}^{H})du)(s)\right\vert ^{2}ds)^{2}]\longrightarrow 0
\end{equation*}%
for $\varepsilon _{1},\varepsilon _{2}\searrow 0$.
Thus%
\begin{equation*}
I_{2}=I_{2}(n,r)\longrightarrow 0\text{ for }n,r\longrightarrow \infty .%
\text{ }
\end{equation*}%
Similarly, we have that%
\begin{equation*}
I_{1}=I_{1}(n,r)\longrightarrow 0\text{ for }n,r\longrightarrow \infty .
\end{equation*}%
Since $E=E(n,r)$ is uniformly bounded with respect to $n,r$ because of the
previous Lemma \ref{expmom}, we obtain the convergence of the Radon-Nikodym derivatives
to a $X$ in $L^{p}(\Omega )$ for $p=1$. The second statement of the Lemma
follows by using characteristic functions combined with dominated
convergence.
\end{proof}

Henceforth, we confine ourselves to the filtered probability space
$(\Omega,\mathfrak{A},P)$, $\mathcal{F}=\{\mathcal{F}_{t}\}_{t\in[0,T]}$
which carries the weak solution $(X^{x},B^{H})$ of \eqref{SDE}.

We now turn to the second step of our procedure.
\begin{lem}\label{weakconv}
Suppose that $H<\frac{1}{2(1+d)}$ and let $\{\varphi
_{\varepsilon }\}_{\varepsilon >0}$ be defined as%
\begin{equation*}
\varphi _{\varepsilon }(y)=\varphi _{\varepsilon ,x}(y)=\varepsilon ^{-\frac{%
d}{2}}\varphi (\varepsilon ^{-\frac{1}{2}}(y-x)),\varepsilon >0,
\end{equation*}%
where $\varphi $ is the $d-$dimensional standard normal density. Denote by $%
X^{x,\varepsilon }=\{X_{t}^{x,\varepsilon },t\in \lbrack 0,T]\}$ the
corresponding solutions of (\ref{SDE}), if we replace $\delta _{x}$ by $\varphi
_{\varepsilon ,x}(y),\varepsilon >0$. Then for every $t\in \lbrack 0,T]$ and
bounded continuous function $\eta :\mathbb{R}^{d}\longrightarrow \mathbb{R}$
we have that%
\begin{equation*}
\eta (X_{t}^{x,\varepsilon })\overset{\varepsilon \longrightarrow 0_{+}}{%
\longrightarrow }E[\eta (X_{t}^{x})\left\vert \mathcal{F}_{t}\right]
\end{equation*}%
weakly in $L^{2}(\Omega ,\mathcal{F}_{t},P)$.

\end{lem}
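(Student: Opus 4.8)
The plan is to combine Girsanov's theorem for the fractional Brownian motion with the change of measure that produced the weak solution. Fix $t\in[0,T]$. Since $\eta$ is bounded, $\{\eta(X_{t}^{x,\varepsilon})\}_{\varepsilon>0}$ is bounded in $L^{\infty}(\Omega)\subset L^{2}(\Omega)$, hence weakly relatively compact in $L^{2}(\Omega,\mathcal{F}_{t},P)$, and the bounded continuous cylinder functionals $G=g(B_{t_{1}}^{H},\dots,B_{t_{n}}^{H})$ with $n\geq1$, $0\leq t_{1}<\cdots<t_{n}\leq t$ and $g\in C_{b}((\R^{d})^{n})$ form a total subset of $L^{2}(\Omega,\mathcal{F}_{t},P)$. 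So it suffices to show, for every such $G$, that $E[\eta(X_{t}^{x,\varepsilon})\,G]\longrightarrow E[\eta(X_{t}^{x})\,G]=E\big[E[\eta(X_{t}^{x})\mid\mathcal{F}_{t}]\,G\big]$ as $\varepsilon\searrow0$.

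First I would rewrite the left-hand side by Theorem \ref{girsanov}. For fixed $\varepsilon>0$ the drift $\alpha\varphi_{\varepsilon}(\cdot)\boldsymbol{1}_{d}$ is bounded and smooth, so \eqref{Xn} has a unique strong solution and $X^{x,\varepsilon}=\Psi_{\varepsilon}(B^{H})$ for a measurable map $\Psi_{\varepsilon}$. Put $L_{s}^{\varepsilon}:=\alpha\int_{0}^{s}\varphi_{\varepsilon}(x+B_{r}^{H})\,dr\,\boldsymbol{1}_{d}$. Applying Girsanov's theorem with $u_{s}=-\alpha\varphi_{\varepsilon}(x+B_{s}^{H})\boldsymbol{1}_{d}$ (its primitive is Lipschitz, hence in $I_{0^{+}}^{H+\frac12}(L^{2})$, and the associated density $\xi_{T}^{\varepsilon}$ satisfies $E[\xi_{T}^{\varepsilon}]=1$ because the drift is bounded, uniform integrability being moreover ensured by Lemma \ref{expmom}) yields that $\widetilde{B}_{t}^{\varepsilon}:=B_{t}^{H}-L_{t}^{\varepsilon}$ is an $\mathcal{F}$-fractional Brownian motion under $Q^{\varepsilon}$, where $\frac{dQ^{\varepsilon}}{dP}=\xi_{T}^{\varepsilon}=\mathcal{E}\big(\int_{0}^{T}K_{H}^{-1}(L^{\varepsilon})^{*}(s)\,dW_{s}\big)$. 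Under $Q^{\varepsilon}$ the process $x+B_{\cdot}^{H}$ solves \eqref{Xn} driven by $\widetilde{B}^{\varepsilon}$, so by pathwise uniqueness $x+B_{\cdot}^{H}=\Psi_{\varepsilon}(\widetilde{B}^{\varepsilon})$ $Q^{\varepsilon}$-a.s.; hence $(X^{x,\varepsilon},B^{H})$ under $P$ has the same law as $(x+B^{H},\widetilde{B}^{\varepsilon})$ under $Q^{\varepsilon}$, which gives
\begin{equation*}
E[\eta(X_{t}^{x,\varepsilon})\,G]=E\big[\xi_{T}^{\varepsilon}\,\eta(x+B_{t}^{H})\,g(B_{t_{1}}^{H}-L_{t_{1}}^{\varepsilon},\dots,B_{t_{n}}^{H}-L_{t_{n}}^{\varepsilon})\big].
\end{equation*}

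Next I would let $\varepsilon\searrow0$ in this identity. By Proposition \ref{weaksolution}, $\xi_{T}^{\varepsilon}\to X$ in $L^{p}(\Omega)$ for every $p\geq1$. Since $\varphi_{\varepsilon}(x+B_{r}^{H})=\varepsilon^{-d/2}\varphi(\varepsilon^{-1/2}B_{r}^{H})$ is a Gaussian approximation of $\delta_{0}$ and $Hd<1$ under the hypothesis $H<\frac{1}{2(1+d)}$, the local time construction recalled in Section 4 gives $L_{s}^{\varepsilon}\to\alpha L_{s}\boldsymbol{1}_{d}$ in $L^{p}(\Omega)$ for each $s\in[0,T]$, where $L$ is the local time of $B^{H}$. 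As $\eta$ and $g$ are bounded and continuous, the factor multiplying $\xi_{T}^{\varepsilon}$ converges in probability and stays uniformly bounded; together with the $L^{2}$-convergence (hence uniform integrability) of $\{\xi_{T}^{\varepsilon}\}$, a Vitali-type argument yields
\begin{equation*}
E[\eta(X_{t}^{x,\varepsilon})\,G]\longrightarrow E\big[X\,\eta(x+B_{t}^{H})\,g(B_{t_{1}}^{H}-\alpha L_{t_{1}}\boldsymbol{1}_{d},\dots,B_{t_{n}}^{H}-\alpha L_{t_{n}}\boldsymbol{1}_{d})\big].
\end{equation*}
Finally, by the construction of the weak solution itself (Proposition \ref{weaksolution} and the discussion around \eqref{weak}): under $X\cdot P$ the process $B_{\cdot}^{H}-\alpha L_{\cdot}\boldsymbol{1}_{d}$ is a fractional Brownian motion and, together with $x+B_{\cdot}^{H}$, is a copy of the weak solution $(X^{x},B^{H})$ of \eqref{SDE}, so the last expectation equals $E[\eta(X_{t}^{x})\,G]$. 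As this holds for all $G$ in a total subset of $L^{2}(\Omega,\mathcal{F}_{t},P)$ and $\{\eta(X_{t}^{x,\varepsilon})\}$ is $L^{2}$-bounded, one concludes the weak convergence $\eta(X_{t}^{x,\varepsilon})\to E[\eta(X_{t}^{x})\mid\mathcal{F}_{t}]$ in $L^{2}(\Omega,\mathcal{F}_{t},P)$.

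The hard part will be the passage $\varepsilon\searrow0$: it relies on the strong $L^{p}$-convergence of the densities $\xi_{T}^{\varepsilon}$, i.e. on Proposition \ref{weaksolution}, whose proof rests on the delicate uniform exponential-moment estimate of Lemma \ref{expmom} (and thus on the shuffle and local-nondeterminism estimates). A further delicate point is the identification of the limit with $E[\eta(X_{t}^{x})\,G]$: since the weak solution $X^{x}$ is not $\mathcal{F}$-adapted, this cannot be obtained directly and must be retraced through the change-of-measure construction of $(X^{x},B^{H})$, which is exactly why the limit is the conditional expectation $E[\eta(X_{t}^{x})\mid\mathcal{F}_{t}]$ and not $\eta(X_{t}^{x})$ itself.
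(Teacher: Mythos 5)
Your proposal is correct and follows essentially the same route as the paper: rewrite $E[\eta(X_t^{x,\varepsilon})G]$ via Girsanov's theorem, pass to the limit using the $L^p$-convergence of the Dol\'eans-Dade exponentials (Proposition \ref{weaksolution}, resting on Lemma \ref{expmom}) together with the $L^p$-convergence of the smoothed local times, and identify the limit through the change-of-measure construction of the weak solution, testing against a total subset of $L^2(\Omega,\mathcal{F}_t,P)$. The only (harmless) deviation is your choice of bounded continuous cylinder functionals as the total set rather than the paper's unbounded exponentials of increments, which spares you the exponential-moment bounds on the local time (\eqref{eq:BoundMomentLT} and Theorem \ref{thm:LDforfBM}) that the paper needs to control its factor $E$.
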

\begin{proof}
Without loss of generality let $x=0$. We mention that%
\begin{equation*}
\Sigma _{t}:=\{\exp \{\sum_{j=1}^{k}\left\langle \alpha
_{j},B_{t_{j}}^{H}-B_{t_{j-1}}^{H}\right\rangle \}:\{\alpha
_{j}\}_{j=1}^{k}\subset \mathbb{R}^{d},0=t_{0}<...<t_{k}=t,k\geq 1\}
\end{equation*}%
is a total subset of $L^{2}(\Omega ,\mathcal{F}_{t},P).$ Denote $X_{t}^{x,\varepsilon }$ by $X_{t}^{n}$ for $\varepsilon=1/n$. Then using Girsanov's
theorem, we find that%
\begin{eqnarray*}
&&E[\eta (X_{t}^{n})\exp \{\sum_{j=1}^{k}\left\langle \alpha
_{j},B_{t_{j}}^{H}-B_{t_{j-1}}^{H}\right\rangle \}] \\
&=&E[\eta (X_{t}^{n})\exp \{\sum_{j=1}^{k}\left\langle \alpha
_{j},X_{t_{j}}^{n}-X_{t_{j-1}}^{n}-\int_{t_{j-1}}^{t_{j}}\varphi
_{1/n}(X_{s}^{n})\boldsymbol{1}_{d}ds\right\rangle \}] \\
&=&E[\eta (B_{t}^{H})\exp \{\sum_{j=1}^{k}\left\langle \alpha
_{j},B_{t_{j}}^{H}-B_{t_{j-1}}^{H}-\int_{t_{j-1}}^{t_{j}}\varphi
_{1/n}(B_{s}^{H})\boldsymbol{1}_{d}ds\right\rangle \} \\
&&\cdot \mathcal{E}(\int_{0}^{t}K_{H}^{-1}(\int_{0}^{\cdot }\varphi
_{1/n}(B_{u}^{H})\boldsymbol{1}_{d}du)^{\ast }(s)dW_{s})].
\end{eqnarray*}

On the other hand, we obtain by $\left\vert e^{x}-e^{y}\right\vert \leq
\left\vert x-y\right\vert e^{x+y}$, H\"{o}lder's inequality, the
supermartingale property of Doleans-Dade exponentials and the proof of
Proposition \ref{weaksolution} that%
\begin{eqnarray*}
&&\left\vert E[\eta (B_{t}^{H})\exp \{\sum_{j=1}^{k}\left\langle \alpha
_{j},B_{t_{j}}^{H}-B_{t_{j-1}}^{H}-\int_{t_{j-1}}^{t_{j}}\varphi
_{1/n}(B_{s}^{H})\boldsymbol{1}_{d}ds\right\rangle \}\right.  \\
&&\left. \cdot \mathcal{E}(\int_{0}^{t}K_{H}^{-1}(\int_{0}^{\cdot }\varphi
_{1/n}(B_{u}^{H})\boldsymbol{1}_{d}du)^{\ast }(s)dW_{s})\right.  \\
&&\left. -\eta (B_{t}^{H})\exp \{\sum_{j=1}^{k}\left\langle \alpha
_{j},B_{t_{j}}^{H}-B_{t_{j-1}}^{H}-\int_{t_{j-1}}^{t_{j}}\delta
_{0}(B_{s}^{H})ds\boldsymbol{1}_{d}\right\rangle \}\right.  \\
&&\left. \cdot X]\right\vert  \\
&\leq &C(I_{1}+I_{2}+I_{3})E,
\end{eqnarray*}%

where 
\begin{equation*}
I_{1}:=E[\left( \sum_{j=1}^{k}\left\langle \alpha
_{j},\int_{t_{j-1}}^{t_{j}}\delta _{0}(B_{s}^{H})ds\boldsymbol{1}%
_{d}-\int_{t_{j-1}}^{t_{j}}\varphi _{1/n}(B_{s}^{H})\boldsymbol{1}%
_{d}ds\right\rangle \right) ^{2}]^{1/2},
\end{equation*}%
\begin{equation*}
I_{2}:=\lim_{r\longrightarrow \infty }E[\int_{0}^{t}\left\vert
K_{H}^{-1}(\int_{0}^{\cdot }\varphi _{1/n}(B_{u}^{H})\boldsymbol{1}%
_{d}du)^{\ast }(s)-K_{H}^{-1}(\int_{0}^{\cdot }\varphi _{1/r}(B_{u}^{H})%
\boldsymbol{1}_{d}du)^{\ast }(s)\right\vert ^{2}ds]^{1/2},
\end{equation*}%
\begin{equation*}
I_{3}:=\lim_{r\longrightarrow \infty }E[(\int_{0}^{t}\left\vert
K_{H}^{-1}(\int_{0}^{\cdot }\varphi _{1/n}(B_{u}^{H})\boldsymbol{1}%
_{d}du)^{\ast }(s)\right\vert ^{2}ds-\int_{0}^{t}\left\vert
K_{H}^{-1}(\int_{0}^{\cdot }\varphi _{1/r}(B_{u}^{H})\boldsymbol{1}%
_{d}du)^{\ast }(s)\right\vert ^{2}ds)^{2}]^{1/2}
\end{equation*}%
and%
\begin{align*}
E &:=\sup_{r\geq 1}E[\exp \{8\sum_{j=1}^{k}\left\langle \alpha_{j},B_{t_{j}}^{H}-B_{t_{j-1}}^{H}-\int_{t_{j-1}}^{t_{j}}\varphi_{1/n}(B_{s}^{H})\boldsymbol{1}_{d}ds\right\rangle \}]^{1/8} \\
&\cdot E[\exp \{8\sum_{j=1}^{k}\left\langle \alpha_{j},B_{t_{j}}^{H}-B_{t_{j-1}}^{H}-\int_{t_{j-1}}^{t_{j}}\delta_{0}(B_{s}^{H})ds\boldsymbol{1}_{d}\right\rangle \}]^{1/8} \\
&\cdot E[\exp \{\mu _{1}\int_{0}^{t}\left\vert K_{H}^{-1}(\int_{0}^{\cdot
}\varphi _{1/n}(B_{u}^{H})\boldsymbol{1}_{d}du)^{\ast }(s)\right\vert
^{2}ds\}]^{1/8} \\
&\cdot E[\exp \{\mu _{2}\int_{0}^{t}\left\vert K_{H}^{-1}(\int_{0}^{\cdot
}\varphi _{1/r}(B_{u}^{H})ds\boldsymbol{1}_{d}du)^{\ast }(s)\right\vert^{2}ds\}]^{1/16}
\end{align*}
for constants $C,\mu _{1},\mu _{2}>0$.

By inspecting the proof of Proposition \ref{weaksolution} once again, we know that%
\begin{equation*}
I_{3}=I_{3}(n)\longrightarrow 0\text{ for }n\longrightarrow \infty .\text{ }
\end{equation*}%
and%
\begin{equation*}
I_{2}=I_{2}(n)\longrightarrow 0\text{ for }n\longrightarrow \infty .
\end{equation*}%
Since $L_{t}^{x}(B^{H},\varepsilon )$ converges to $L_{t}^{x}(B^{H})$ in $%
L^{p}(\Omega )$ for all $p\geq 1$, we also conclude that%
\begin{equation*}
I_{1}=I_{1}(n)\longrightarrow 0\text{ for }n\longrightarrow \infty .
\end{equation*}

On the other hand, we obtain from (\ref{eq:BoundMomentLT}), Theorem \ref{thm:LDforfBM} and Lemma \ref{expmom} that%
\begin{equation*}
E=E(n)\leq K
\end{equation*}%
for all $n$, where $K$ is a constant. So we see that%
\begin{eqnarray*}
&&E[\eta (X_{t}^{n})\exp \{\sum_{j=1}^{k}\left\langle \alpha
_{j},B_{t_{j}}^{H}-B_{t_{j-1}}^{H}\right\rangle \}]\longrightarrow \\
&&E[\eta (B_{t}^{H})\exp \{\sum_{j=1}^{k}\left\langle \alpha
_{j},B_{t_{j}}^{H}-B_{t_{j-1}}^{H}-\int_{t_{j-1}}^{t_{j}}\delta
_{0}(B_{s}^{H})\boldsymbol{1}_{d}ds\right\rangle \} \\
&&\cdot X] \\
&=&E[E[\eta (X_{t})\left\vert \mathcal{F}_{t}\right] \exp
\{\sum_{j=1}^{k}\left\langle \alpha
_{j},B_{t_{j}}^{H}-B_{t_{j-1}}^{H}\right\rangle \}]
\end{eqnarray*}%
for $n\longrightarrow \infty ,$ which completes the proof.
\end{proof}

We continue with the third step of our scheme. This is the most challenging part. 
For notational convenience let us \emph{from know on} assume that $\alpha =1$
in (\ref{SDE}) and that $\varphi _{\varepsilon }^{\shortmid }$ stands for the Jacobian of $\varphi
_{\varepsilon }\mathbf{1}_{d}$.
The following result is based on a compactness criterion for subsets of $L^2(\Omega)$ which is summarised in the Appendix.

\begin{lem}\label{relcomp}
Let $\{\varphi_{\varepsilon}\}_{\varepsilon > 0}$ the family of Gaussian kernels approximating Dirac's delta function $\delta_0$ in the sense of \eqref{Xn}. Fix $t\in [0,T]$ and denote by $X_t^{\varepsilon}$ the corresponding solutions of \eqref{SDE} if we replace $L_t(X^x)$ by $\int_0^t \varphi_{\varepsilon}(X_s^{\varepsilon}) ds$, $\varepsilon > 0$. Then there exists a $\beta\in (0,1/2)$ such that
$$\sup_{\varepsilon > 0} \int_0^t \int_0^t \frac{E[\|D_\theta X_t^{\varepsilon} - D_{\theta'} X_t^{\varepsilon}\|^2]}{|\theta' - \theta|^{1+2\beta}} d\theta' d\theta <\infty$$
and
\begin{align}\label{Mallest}
\sup_{\varepsilon > 0} \|D_{\cdot} X_t^{\varepsilon}\|_{L^2(\Omega\times [0,T],\R^{d\times d})} <\infty.
\end{align}
\end{lem}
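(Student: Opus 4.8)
The plan is to first make the Malliavin derivative completely explicit, and then reduce the required $L^2$-estimates to the integration-by-parts machinery of Section \ref{Section3} by a combination of shuffling, Girsanov's theorem and Cauchy--Schwarz. For fixed $\varepsilon>0$ the coefficient $\varphi_\varepsilon$ is smooth with bounded derivatives, so \eqref{DXn1} is a linear integral equation for $D_\theta X_t^\varepsilon$ which can be solved by Picard iteration, giving the (for fixed $\varepsilon$) convergent series
\begin{equation*}
D_\theta X_t^\varepsilon = K_H(t,\theta)I_d + \sum_{m\geq 1}\int_{\Delta_{\theta,t}^m}\Big(\prod_{j=1}^m\varphi_\varepsilon'(X_{u_j}^\varepsilon)\Big)K_H(u_m,\theta)\,du_m\cdots du_1,
\end{equation*}
the matrix product being ordered by decreasing $u_j$. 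Subtracting the same expression at $\theta'<\theta$, and splitting $\Delta_{\theta',t}^m=\Delta_{\theta,t}^m\cup\{\theta'<u_m<\theta<\cdots<u_1<t\}$, one writes $D_\theta X_t^\varepsilon-D_{\theta'}X_t^\varepsilon$ as a series whose generic term is either an integral over $\Delta_{\theta,t}^m$ carrying a weight $K_H(\cdot,\theta)-K_H(\cdot,\theta')$, or a term with the innermost time variable confined to $(\theta',\theta)$ and carrying the weight $K_H(\cdot,\theta')$; the second kind has the nested shape treated by Lemma \ref{partialshuffle}. Each factor $\varphi_\varepsilon'(X_{u_j}^\varepsilon)$ is a matrix of \emph{first-order} partial derivatives of $\varphi_\varepsilon$, so after expanding in components every term is of the form $\int D^{\alpha_j}\varphi_\varepsilon(X_{u_j}^\varepsilon)\cdots$ with $|\alpha_j|=1$, i.e.\ $|\alpha|=m$ in the notation of Remark \ref{Remark 3.4}.

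To bound $E[\|D_\theta X_t^\varepsilon-D_{\theta'}X_t^\varepsilon\|^2]$ I would expand the square, obtaining a double sum over $m,n\geq 0$ of expectations of products of a simplex integral of order $m$ and one of order $n$. By the shuffle identity \eqref{shuffleIntegral} (and Lemma \ref{partialshuffle} for the boundary terms) each such product is a sum of at most $C^{m+n}$ integrals over the single simplex $\Delta_{\theta,t}^{m+n}$ of a product $\prod_{l=1}^{m+n}D^{\gamma_l}\varphi_\varepsilon(X_{w_l}^\varepsilon)\varkappa_l(w_l)$, in which exactly two of the $\varkappa_l$ are the nontrivial weight $K_H(\cdot,\theta)-K_H(\cdot,\theta')$ (resp.\ $K_H(\cdot,\theta')$) and the rest equal $1$. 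Since for fixed $\varepsilon$ the drift $\varphi_\varepsilon\boldsymbol{1}_d$ is bounded, Girsanov's theorem (Theorem \ref{girsanov}) together with the weak-solution construction of Proposition \ref{weaksolution} turns each such expectation into the same integral evaluated along $x+B^H$, times the Dol\'eans--Dade density $\mathcal{E}_T=\mathcal{E}(\int_0^T K_H^{-1}(\int_0^\cdot\varphi_\varepsilon(B_u^H)\boldsymbol{1}_d\,du)^\ast(s)\,dW_s)$. A Cauchy--Schwarz step bounds it by
\begin{equation*}
\Big(E\Big[\Big(\int_{\Delta_{\theta,t}^{m+n}}\prod_{l=1}^{m+n}D^{\gamma_l}\varphi_\varepsilon(x+B_{w_l}^H)\varkappa_l(w_l)\,dw\Big)^2\Big]\Big)^{1/2}\big(E[\mathcal{E}_T^2]\big)^{1/2}.
\end{equation*}
The second factor is bounded \emph{uniformly in} $\varepsilon$ by the exponential-moment bound of Lemma \ref{expmom} (and the supermartingale argument used in Proposition \ref{weaksolution}). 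For the first factor I would expand the square once more, shuffle the two copies of $\Delta_{\theta,t}^{m+n}$ into $\Delta_{\theta,t}^{2(m+n)}$ via \eqref{shuffleIntegral}, and apply the integration-by-parts estimate of Theorem \ref{mainthmlocaltime} in the packaged form of Propositions \ref{mainestimate1} and \ref{mainestimate2}, now with $2(m+n)$ factors, $|\alpha|=2(m+n)$, and four active weights; the hypothesis $H<\tfrac1{2(2+d)}$ is exactly what makes these propositions applicable when each $\alpha_j$ has order $1$ (choose $\gamma\in(0,H)$ with $H<\tfrac{1/2-\gamma}{d+2}$).

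The decisive point is that in Propositions \ref{mainestimate1}--\ref{mainestimate2} the derivatives of $\varphi_\varepsilon$ have been absorbed into the factors $(-iu_j)^{\alpha_j}$ of the random field $\Lambda^{\varkappa f}_\alpha$ in \eqref{LambdaDef}, so that only $\|\varphi_\varepsilon\|_{L^1(\R^d;L^\infty([0,T]))}=\|\varphi_\varepsilon\|_{L^1(\R^d)}=1$ enters the final estimate, whereas a naive differentiation would produce $\|\varphi_\varepsilon'\|_{L^1}\sim\varepsilon^{-1/2}$ and destroy the uniformity. Collecting the bounds, the $(m,n)$-term of the double sum is dominated by $C^{m+n}$ times $\big(\tfrac{\theta-\theta'}{\theta\theta'}\big)^{2\gamma}\theta^{2(H-\frac12-\gamma)}(t-\theta)^{(m+n)(1-H(d+2))+O(1)}$ times $\big((4(m+n))!\big)^{1/8}\Gamma\big(4(1-H(d+2))(m+n)+O(1)\big)^{-1/4}$; since $1-H(d+2)>\tfrac12$, the $\Gamma$-factor decays super-exponentially and dominates both $C^{m+n}$ and the factorial, so the series converges and the $(t-\theta)$-powers remain bounded. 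One is left with $E[\|D_\theta X_t^\varepsilon-D_{\theta'}X_t^\varepsilon\|^2]\leq C|\theta-\theta'|^{2\gamma}(\theta\theta')^{-2\gamma}\theta^{2H-1-2\gamma}$ plus the easy leading term $d(K_H(t,\theta)-K_H(t,\theta'))^2$, both of which integrate against $|\theta-\theta'|^{-1-2\beta}$ over $[0,t]^2$ for $\beta>0$ small enough (e.g.\ $\beta<\gamma$ with $\gamma$ small), giving the first assertion. The bound \eqref{Mallest} follows from the same scheme applied to $E[\|D_\theta X_t^\varepsilon\|^2]$, where only the weight $\varkappa_l(s)=K_H(s,\theta)^{\varepsilon_l}$ of Proposition \ref{mainestimate2} occurs and the leading term contributes $\int_0^t d\,K_H(t,\theta)^2\,d\theta=d\,t^{2H}<\infty$.

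The main obstacle is the combinatorial and analytic bookkeeping of the two nested rounds of shuffling/Girsanov/Cauchy--Schwarz: one must check that the number of shuffle permutations and of component terms grows only geometrically (which is where Lemma \ref{partialshuffle} and the bound $\#S(p,q)\leq C^{p+q}$ enter), that the Dol\'eans--Dade densities are $L^2$-bounded uniformly in $\varepsilon$, and --- most importantly --- that after all reductions the only norm of $\varphi_\varepsilon$ that survives is the $L^1$-norm, so that the estimates are genuinely independent of $\varepsilon$. Tracking the powers of $\theta$, $\theta'$ and $t-\theta$ precisely enough to extract an admissible H\"older exponent $\beta\in(0,1/2)$ requires the quantitative form of Propositions \ref{mainestimate1}--\ref{mainestimate2} and the condition $H<\tfrac1{2(2+d)}$.
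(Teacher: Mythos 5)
Your proposal is correct and follows essentially the same route as the paper: Picard expansion of the Malliavin derivative, decomposition of $D_{\theta'}X_t^{\varepsilon}-D_{\theta}X_t^{\varepsilon}$ into the bare kernel difference plus terms weighted by $K_H(\cdot,\theta)-K_H(\cdot,\theta')$ on $\Delta_{\theta,t}$ and by $K_H(\cdot,\theta')$ on $\Delta_{\theta',\theta}$, then Girsanov with Lemma \ref{expmom}, shuffling, and Propositions \ref{mainestimate1}--\ref{mainestimate2} with $|\alpha_j|=1$ so that only $\|\varphi_{\varepsilon}\|_{L^1}=1$ survives. The only differences are cosmetic (you expand the square of the series before applying Cauchy--Schwarz and invoke Lemma \ref{partialshuffle} for the nested terms, whereas the paper bounds the series termwise in $L^4$ and factorizes the $\Delta_{\theta',\theta}$ contribution via H\"older), and your singularity bookkeeping in $\theta,\theta',t-\theta$ matches the paper's.
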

\begin{proof}
Fix $t\in [0,T]$ and take $\theta,\theta'>0$ such that $0<\theta'<\theta < t$. Using the chain rule for the Malliavin derivative, see \cite[Proposition 1.2.3]{Nua10}, we have
$$D_{\theta} X_t^{\varepsilon} = K_H(t,\theta) I_{d} + \int_{\theta}^t \varphi_{\varepsilon}'(X_s^{\varepsilon}) D_{\theta} X_s^{\varepsilon} ds$$
$P$-a.s. for all $0\leq \theta \leq t$ where $\varphi_{\varepsilon}'(z) = \left(\frac{\partial}{\partial z_j} \varphi_{\varepsilon}^{(i)} (z) \right)_{i,j=1,\dots, d}$ denotes the Jacobian matrix of $\varphi_{\varepsilon}$ and $I_d$ the identity matrix in $\R^{d\times d}$. Thus we have

\begin{align*}
D_{\theta'} X_t^{\varepsilon} -& D_{\theta} X_t^{\varepsilon} = K_H(t,\theta')I_d- K_H(t,\theta)I_d\\
&+\int_{\theta'}^t \varphi_{\varepsilon}'(X_s^{\varepsilon}) D_{\theta'} X_s^{\varepsilon} ds - \int_{\theta}^t \varphi_{\varepsilon}'(X_s^{\varepsilon}) D_{\theta} X_s^{\varepsilon} ds\\
=& K_H(t,\theta')I_d- K_H(t,\theta) I_d\\
&+\int_{\theta'}^{\theta} \varphi_{\varepsilon}'(X_s^{\varepsilon}) D_{\theta'} X_s^{\varepsilon} ds+\int_{\theta}^t  \varphi_{\varepsilon}'(X_s^{n}) (D_{\theta'} X_s^{\varepsilon} - D_{\theta}X_s^{\varepsilon}) ds\\
=& K_H(t,\theta')I_d - K_H(t,\theta) I_d + D_{\theta'}X_{\theta}^{\varepsilon} - K_H (\theta,\theta')I_d  \\
&+ \int_{\theta}^t \varphi_{\varepsilon}'(X_s^{\varepsilon})(D_{\theta'} X_s^{\varepsilon} - D_{\theta}X_s^{\varepsilon})ds.
\end{align*}
Using Picard iteration applied to the above equation we may write

\begin{align*}
D_{\theta'} X_t^{\varepsilon} -& D_{\theta} X_t^{\varepsilon} = K_H(t,\theta')I_d - K_H (t,\theta)I_d\\
&+ \sum_{m=1}^{\infty} \int_{\Delta_{\theta,t}^m} \prod_{j=1}^m  \varphi_{\varepsilon}'(X_{s_j}^{\varepsilon}) \left(K_H(s_m,\theta')I_d - K_H (s_m,\theta)I_d\right) ds_m \cdots ds_1\\
&+ \left( I_d + \sum_{m=1}^{\infty} \int_{\Delta_{\theta,t}^m} \prod_{j=1}^m  \varphi_{\varepsilon}'(X_{s_j}^{\varepsilon})ds_m \cdots ds_1 \right) \left(D_{\theta'} X_{\theta}^{\varepsilon} - K_H(\theta,\theta') I_d\right).
\end{align*}
On the other hand, observe that one may again write
\begin{align*}
D_{\theta'} X_{\theta}^{\varepsilon} - K_H(\theta,\theta')I_d = \sum_{m=1}^{\infty} \int_{\Delta_{\theta',\theta}^m} \prod_{j=1}^m  \varphi_{\varepsilon}'(X_{s_j}^{\varepsilon}) (K_H(s_m,\theta') I_d) \, ds_m \cdots ds_1.
\end{align*}
Altogether, we can write
$$D_{\theta'} X_t^{\varepsilon} - D_{\theta} X_t^{\varepsilon} = I_1(\theta',\theta) + I_2^{\varepsilon} (\theta',\theta)+ I_3^{\varepsilon} (\theta',\theta),$$
where
\begin{align*}
I_1(\theta',\theta) :=&  K_H(t,\theta')I_d - K_H (t,\theta)I_d\\
I_2^{\varepsilon}(\theta',\theta) :=& \sum_{m=1}^{\infty} \int_{\Delta_{\theta,t}^m} \prod_{j=1}^m  \varphi_{\varepsilon}'(X_{s_j}^{\varepsilon}) \left( K_H(s_m,\theta')I_d - K_H (s_m,\theta)I_d \right) ds_m \cdots ds_1\\
I_3^{\varepsilon}(\theta',\theta) :=&\left(I_d+ \sum_{m=1}^{\infty} \int_{\Delta_{\theta,t}^m} \prod_{j=1}^m  \varphi_{\varepsilon}'(X_{s_j}^{\varepsilon})ds_m \cdots ds_1 \right)\\
&\times\left(\sum_{m=1}^{\infty} \int_{\Delta_{\theta',\theta}^m} \prod_{j=1}^m  \varphi_{\varepsilon}'(X_{s_j}^{\varepsilon}) (K_H(s_m,\theta')I_d) ds_m \cdots ds_1.\right).
\end{align*}

It follows from Lemma \ref{doubleint} that
\begin{align}\label{double1}
\int_0^t \int_0^t \frac{\|I_1(\theta',\theta)\|_{L^2(\Omega)}^2}{|\theta'-\theta|^{1+2\beta}}d\theta d\theta' = \int_0^t \int_0^t \frac{|K_H(t,\theta')-K_H(t,\theta)|^2}{|\theta'-\theta|^{1+2\beta}}d\theta d\theta'<\infty
\end{align}
for a suitably small $\beta \in (0,1/2)$.

Let us continue with the term $I_2^n(\theta',\theta)$. Then Girsanov's theorem, Cauchy-Schwarz inequality and Lemma \ref{expmom} imply
\begin{align*}
E[&\| I_2^{\varepsilon}(\theta',\theta) \|^2]\\
&\leq C E\left[ \left\|\sum_{m=1}^{\infty} \int_{\Delta_{\theta,t}^m} \prod_{j=1}^m  \varphi_{\varepsilon}'(x+B_{s_j}^H) \left( K_H(s_m,\theta')I_d - K_H (s_m,\theta)I_d \right) ds_m \cdots ds_1\right\|^4 \right]^{1/2},
\end{align*}
where $C>0$ is an upperbound from Lemma \ref{expmom}.

Let $\|\cdot\|$ denote the matrix norm in $\R^{d\times d}$ such that $\|A\| = \sum_{i,j=1}^d |a_{ij}|$ for a matrix $A=\{a_{ij}\}_{i,j=1,\dots,d}$, then taking this matrix norm and expectation we have

\begin{align*}
E[\| I_2^{\varepsilon}(\theta',\theta) \|^2] \leq& C\Bigg(\sum_{m=1}^{\infty} \sum_{i,j=1}^d \sum_{l_1,\dots, l_{m-1}=1}^d \Bigg\|\int_{\Delta_{\theta,t}^m} \frac{\partial}{\partial x_{l_1}} \varphi_{\varepsilon}^{(i)} (x+B_{s_1}^H) \frac{\partial}{\partial x_{l_2}}\varphi_{\varepsilon}^{(l_1)} (x+B_{s_2}^H) \cdots\\
& \cdots \frac{\partial}{\partial x_j}\varphi_{\varepsilon}^{(l_{m-1})} (x+B_{s_m}^H) \left( K_H(s_m,\theta') - K_H (s_m,\theta) \right) ds_m \cdots ds_1\Bigg\|_{L^4(\Omega, \R)}\Bigg)^{2}  .
\end{align*}

Now we concentrate on the expression
\begin{align}\label{I}
J_2^{\varepsilon}(\theta',\theta) := \int_{\Delta_{\theta,t}^m} \frac{\partial}{\partial x_{l_1}} \varphi_{\varepsilon}^{(i)} (x+B_{s_1}^H) \cdots \frac{\partial}{\partial x_j}\varphi_{\varepsilon}^{(l_{m-1})} (x+B_{s_m}^H) \left( K_H(s_m,\theta') - K_H (s_m,\theta) \right) ds.
\end{align}
Then, shuffling $J_2^{\varepsilon}(\theta',\theta)$ as shown in \eqref{shuffleIntegral}, one can write $(J_2^{\varepsilon}(\theta',\theta))^2$ as a sum of at most $2^{2m}$ summands of length $2m$ of the form
\begin{align}\label{II}
\int_{\Delta_{\theta,t}^{2m}} g_1^{\varepsilon} (B_{s_1}^H) \cdots g_{2m}^{\varepsilon} (B_{s_{2m}}^H) ds_{2m} \cdots ds_1,
\end{align}
where for each $l=1,\dots, 2m$,
$$g_l^{\varepsilon}(B_{\cdot}^H) \in \left\{ \frac{\partial}{\partial x_j} \varphi_{\varepsilon}^{(i)} (x+B_{\cdot}^H),  \frac{\partial}{\partial x_j} \varphi_{\varepsilon}^{(i)} (x+B_{\cdot}^H)\left( K_H(\cdot,\theta') - K_H (\cdot,\theta) \right), \, i,j=1,\dots,d\right\}.$$

Repeating this argument once again, we find that $J_2^{\varepsilon}(\theta',\theta)^4$ can be expressed as a sum of, at most, $2^{8m}$ summands of length $4m$ of the form
\begin{align}\label{III}
\int_{\Delta_{\theta,t}^{4m}} g_1^{\varepsilon} (B_{s_1}^H) \cdots g_{4m}^{\varepsilon} (B_{s_{4m}}^H) ds_{4m} \cdots ds_1,
\end{align}
where for each $l=1,\dots, 4m$,
$$g_l^{\varepsilon}(B_{\cdot}^H) \in \left\{ \frac{\partial}{\partial x_j} \varphi_{\varepsilon}^{(i)} (x+B_{\cdot}^H),  \frac{\partial}{\partial x_j} \varphi_{\varepsilon}^{(i)} (x+B_{\cdot}^H)\left( K_H(\cdot,\theta') - K_H (\cdot,\theta) \right), \, i,j=1,\dots,d\right\}.$$

It is important to note that the function $\left( K_H(\cdot,\theta') - K_H (\cdot,\theta) \right)$ appears only once in term \eqref{I} and hence only four times in term \eqref{III}. So there are indices $j_1,\dots, j_4 \in \{1,\dots, 4m\}$ such that we can write \eqref{III} as
$$\int_{\Delta_{\theta,t}^{4m}} \left(\prod_{j=1}^{4m} g_j^{\varepsilon}(B_{s_j}^H)\right)  \prod_{i=1}^4 \left( K_H(s_{j_i},\theta') - K_H (s_{j_i},\theta)\right) ds_{4m} \cdots ds_1,$$
where
$$g_l^{\varepsilon}(B_{\cdot}^H) \in \left\{ \frac{\partial}{\partial x_j} \varphi_{\varepsilon}^{(i)} (x+B_{\cdot}^H), \, i,j=1,\dots,d\right\}, \quad l=1,\dots,4m.$$

The latter enables us to use the estimate from Proposition \ref{mainestimate1} with $\sum_{j=1}^{4m} \varepsilon_j=4$, $\sum_{l=1}^{d}\alpha _{\lbrack \sigma (j)]}^{(1)}=1$ for all $j$, $\left\vert \alpha \right\vert =4m$ and Remark \ref{Remark 3.4}. Thus we obtain that
\begin{align*}
E(J_2^{\varepsilon}(\theta',\theta))^4 \leq   \left(\frac{\theta-\theta'}{\theta \theta'}\right)^{4\gamma} \theta^{4\left(H-\frac{1}{2}-\gamma\right)} C^{4m}  \|\varphi_{\varepsilon}\|_{L^{1}(\R^d)}^{4m} A_m^{\gamma}(H,d, |t-\theta|) 
\end{align*}
whenever $H<\frac{1}{2(2+d)}$ and $\gamma\in (0,H)$, where
\begin{equation*} 
A^{\gamma }(H,d,\left\vert t-\theta \right\vert ):=\frac{((8m)!)^{1/4}(t-%
\theta )^{-H(4m(d+2))-4(H-\frac{1}{2}-\gamma )+4m}}{\Gamma (-H(d+2)8m+8(H-%
\frac{1}{2}-\gamma )+8m)^{1/2}}. \label{Agamma}
\end{equation*}
Note that
$\left\Vert \varphi _{\varepsilon }\right\Vert _{L^{1}(\mathbb{R}^{d})}=1$.

Altogether, we see that
\begin{align*}
E\left[ \|I_2^{\varepsilon} (\theta',\theta)\|^2 \right] \leq \left(\frac{\theta-\theta'}{\theta \theta'}\right)^{2\gamma} \theta^{2\left(H-\frac{1}{2}-\gamma\right)} \left(\sum_{m=1}^\infty d^{m+1} C^m \|\varphi_{\varepsilon}\|_{L^{1}(\R^d)}^m A_m^{\gamma}(H,d,|T|)^{1/4}   \right)^2.
\end{align*}

So we can find a constant $C>0$ such that
\begin{align*}
\sup_{\varepsilon > 0}E\left[ \|I_2^{\varepsilon} (\theta',\theta)\|^2 \right] \leq C\left(\frac{\theta-\theta'}{\theta \theta'}\right)^{2\gamma} \theta^{2\left(H-\frac{1}{2}-\gamma\right)}
\end{align*}
for $\gamma\in (0,H)$ provided that $H<\frac{1}{2(2+d)}$. It is easy to see that we can choose $\gamma \in (0,H)$ such that there is a suitably small $\beta\in (0,1/2)$, $0<\beta< \gamma<H<1/2$ so that
it follows from the proof of Lemma \ref{doubleint} that
\begin{align}\label{double2}
\int_0^t \int_0^t \left|\frac{\theta-\theta'}{\theta \theta'}\right|^{2\gamma} |\theta|^{2\left(H-\frac{1}{2}-\gamma\right)} |\theta-\theta'|^{-1-2\beta} d\theta' d\theta <\infty,
\end{align}
for every $t\in (0,T]$.

We now turn to the term $I_3^{\varepsilon}(\theta',\theta)$. Observe that term $I_3^{\varepsilon} (\theta',\theta)$ is the product of two terms, where the first one will simply be bounded uniformly in $\theta,t \in [0,T]$ under expectation. This can be shown by following meticulously the same steps as we did for $I_2^{\varepsilon}(\theta',\theta)$ and observing that in virtue of Proposition \ref{mainestimate2} with $\varepsilon_j = 0$ for all $j$ the singularity in $\theta$ vanishes.

Again Girsanov's theorem, Cauchy-Schwarz inequality several times and Lemma \ref{expmom} lead to
\begin{align*}
E [\|I_3^{\varepsilon}(\theta',\theta)\|^2] \leq& C \left\|I_d+ \sum_{m=1}^{\infty} \int_{\Delta_{\theta,t}^m} \prod_{j=1}^m  \varphi_{\varepsilon}'(x+B_{s_j}^H)ds_m \cdots ds_1 \right\|_{L^8(\Omega, \R^{d\times d}) }^2 \\
&\times \left\| \sum_{m=1}^{\infty} \int_{\Delta_{\theta',\theta}^m} \prod_{j=1}^m  \varphi_{\varepsilon}'(x+B_{s_j}^H) K_H(s_m,\theta') ds_m \cdots ds_1\right\|_{L^4(\Omega, \R^{d\times d}) }^2,
\end{align*}
where $C>0$ denotes an upperbound obtained from Lemma \ref{expmom}.

Again, we have
\begin{align*}
E [\|I_3^{\varepsilon}(\theta',\theta)\|^2] \leq & C \Bigg(1+ \sum_{m=1}^{\infty} \sum_{i,j=1}^d \sum_{l_1,\dots, l_{m-1}=1}^d \Bigg\|\int_{\Delta_{\theta,t}^m} \frac{\partial}{\partial x_{l_1}} \varphi_{\varepsilon}^{(i)}(x+B_{s_1}^H) \cdots\\
&\cdots  \frac{\partial}{\partial x_j} \varphi_{\varepsilon}^{(l_{m-1})} (x+B_{s_m}^H) ds_m \cdots ds_1 \Bigg\|_{L^8(\Omega, \R)}  \Bigg)^2\\
&\times \Bigg( \sum_{m=1}^{\infty}\sum_{i,j=1}^d \sum_{l_1,\dots, l_{m-1}=1}^d \Bigg\|\int_{\Delta_{\theta',\theta}^m} \frac{\partial}{\partial x_{l_1}} \varphi_{\varepsilon}^{(i)}(x+B_{s_1}^H) \cdots \\
&\cdots  \frac{\partial}{\partial x_j} \varphi_{\varepsilon}^{(l_{m-1})} (x+B_{s_m}^H) K_H(s_m,\theta') ds_m \cdots ds_1\Bigg\|_{L^4(\Omega, \R)} \Bigg)^2.
\end{align*}

Using exactly the same reasoning as for $I_2^{\varepsilon}(\theta',\theta)$ we see that the first factor can be bounded by some finite constant $C$ depending on $H$, $d$, $T$, i.e.
\begin{align*}
E [\|I_3^{\varepsilon}(\theta',\theta)\|^2] \leq& C \Bigg( \sum_{m=1}^{\infty}\sum_{i,j=1}^d \sum_{l_1,\dots, l_{m-1}=1}^d \Bigg\|\int_{\Delta_{\theta',\theta}^m} \frac{\partial}{\partial x_{l_1}} \varphi_{\varepsilon}^{(i)}(x+B_{s_1}^H) \cdots\\
&\cdots \frac{\partial}{\partial x_j} \varphi_{\varepsilon}^{(l_{m-1})} (x+B_{s_m}^H) K_H(s_m,\theta') ds_m \cdots ds_1\Bigg\|_{L^4(\Omega, \R)} \Bigg)^2.
\end{align*}
As before, we pay attention to
\begin{align}\label{IV}
J_3^{\varepsilon}(\theta',\theta) := \int_{\Delta_{\theta',\theta}^m} \frac{\partial}{\partial x_{l_1}} \varphi_{\varepsilon}^{(i)} (x+B_{s_1}^H) \cdots \frac{\partial}{\partial x_j}\varphi_{\varepsilon}^{(l_{m-1})} (x+B_{s_m}^H) K_H(s_m,\theta') ds_m \cdots ds_1.
\end{align}

We can express $(J_3^{\varepsilon}(\theta',\theta))^4$ as a sum of, at most, $2^{8m}$ summands of length $4m$ of the form
\begin{align}\label{V}
\int_{\Delta_{\theta',\theta}^{4m}} g_1^{\varepsilon} (B_{s_1}^H) \cdots g_{4m}^{\varepsilon} (B_{s_{4m}}^H) ds_{4m} \cdots ds_1,
\end{align}
where for each $l=1,\dots, 4m$,
$$g_l^{\varepsilon}(B_{\cdot}^H) \in \left\{ \frac{\partial}{\partial x_j} \varphi_{\varepsilon}^{(i)} (x+B_{\cdot}^H),  \frac{\partial}{\partial x_j} \varphi_{\varepsilon}^{(i)} (x+B_{\cdot}^H) K_H(\cdot,\theta'), \, i,j=1,\dots,d\right\},$$
where the factor $K_H(\cdot,\theta')$ is repeated four times in the integrand of \eqref{V}. Now we can simply apply Proposition \ref{mainestimate2} with $\sum_{j=1}^{4m}\varepsilon_j=4$, $\sum_{l=1}^{d}\alpha _{\lbrack \sigma (j)]}^{(1)}=1$ for all $j$, $\left\vert \alpha \right\vert =4m$ and Remark \ref{Remark 3.4} in order to get
$$E[(J_3^{\varepsilon}(\theta',\theta))^4] \leq \theta^{4\left(H-\frac{1}{2}\right)} C^{4m} \|\varphi_{\varepsilon}\|_{L^{1}(\R^d)}^{4m} A_m^{0}(H,d, |\theta-\theta'|),$$
whenever $H<\frac{1}{2(2+d)}$ where $A_m^{0}(H,d, |\theta-\theta'|)$ is defined as in \eqref{Agamma} by inserting $\gamma =0$.

As a result,
$$E[\|I_3^{\varepsilon}(\theta',\theta)\|^2] \leq \theta^{2\left(H-\frac{1}{2}\right)}\left(\sum_{m=1}^\infty d^{m+1} C^m  \|\varphi_{\varepsilon}\|_{L^{1}(\R^d)}^{m} A_m^0(H,d,|\theta-\theta'|)^{1/4}\right)^2.$$

Since the exponent of $|\theta-\theta'|$ appearing in $A_m^0(H,d,|\theta-\theta|)$ is strictly positive by assumption, we can find a small enough $\delta>0$ and a constant $C:=C_{H,d,T}>0$ such that
$$\sup_{\varepsilon>0} E[\|I_3^{\varepsilon}(\theta',\theta)\|^2] \leq C |\theta|^{2\left(H-\frac{1}{2}\right)}|\theta - \theta'|^{\delta}$$
provided $H<\frac{1}{2(2+d)}$. Then again, it is easy to see that we can choose $\beta\in(0,1/2)$ small enough so that
it follows from the proof of Lemma \ref{doubleint} that
\begin{align}\label{double3}
\int_0^t \int_0^t |\theta|^{2\left(H-\frac{1}{2}\right)}|\theta - \theta'|^{\varepsilon -1 -2\beta} d\theta' d\theta <\infty,
\end{align}
for every $t\in [0,T]$.

Altogether, taking a suitable $\beta$ so that \eqref{double1}, \eqref{double2} and \eqref{double3} are finite, we have
$$\sup_{\varepsilon> 0} \int_0^t \int_0^t \frac{E[\| D_{\theta'} X_t^{\varepsilon} - D_{\theta} X_t^{\varepsilon} \|^2]}{|\theta' - \theta|^{1+2\beta}} d\theta' d\theta <\infty.$$

Similar computations show that
$$\sup_{\varepsilon>0} \|D_{\cdot} X_t^{\varepsilon}\|_{L^2(\Omega\times [0,T],\R^{d\times d})} < \infty.$$
\end{proof}

\begin{cor}\label{L2conv}
Let $\{X_t^{\varepsilon}\}_{\varepsilon>0}$ the family of approximating solutions of \eqref{SDE} in the sense of \eqref{Xn}. Then for every $t\in [0,T]$ and bounded continuous function $h:\R^d \rightarrow \R$ we have
$$h(X_t^{n}) \xrightarrow{n \to \infty} h(E\left[ X_t |\mathcal{F}_t \right])$$
strongly in $L^2(\Omega; \mathcal{F}_t)$. In addition, $E\left[X_t|\mathcal{F}_t\right]$ is Malliavin differentiable for every $t\in [0,T]$.
\end{cor}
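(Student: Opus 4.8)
The plan is to combine the relative compactness from Lemma~\ref{relcomp} with the weak convergence from Lemma~\ref{weakconv}, using the elementary fact that a sequence in a Hilbert space which is relatively compact in norm and converges weakly converges in norm. First I would invoke the compactness criterion of Appendix~A: together with the two estimates in Lemma~\ref{relcomp} and the uniform bound $\sup_{n}\|X_t^{n}\|_{L^2(\Omega;\R^d)}<\infty$ (which follows from Girsanov's theorem, the Cauchy--Schwarz inequality and the exponential moment estimate of Lemma~\ref{expmom}, exactly as in the proof of Proposition~\ref{weaksolution}), it yields that $\{X_t^{n}\}_{n\geq 1}$ is relatively compact in $L^2(\Omega;\R^d)$. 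Since each $X_t^{n}$ is the unique \emph{strong} solution of \eqref{Xn} it is $\mathcal{F}_t$-measurable, hence so is every $L^2(\Omega;\R^d)$-limit point of $\{X_t^{n}\}_n$.

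Next I would identify these limit points. Let $Y$ be the $L^2$-limit of a subsequence $X_t^{n_k}$; passing to a further subsequence we may assume $X_t^{n_k}\to Y$ $P$-a.s. For bounded continuous $\eta:\R^d\to\R$ we then have $\eta(X_t^{n_k})\to\eta(Y)$ a.s. and, by dominated convergence, in $L^2(\Omega)$, while Lemma~\ref{weakconv} gives $\eta(X_t^{n_k})\rightharpoonup E[\eta(X_t)|\mathcal F_t]$ weakly in $L^2(\Omega,\mathcal F_t,P)$; uniqueness of weak limits forces $\eta(Y)=E[\eta(X_t)|\mathcal F_t]$. Applying this componentwise with the continuous truncations $y_i\mapsto(-R)\vee y_i\wedge R$ and letting $R\to\infty$ --- using that $X_t\in L^2(\Omega,P)$, once again by the Girsanov/exponential-moment bounds, together with the $L^2$-contractivity of conditional expectation --- gives $Y=E[X_t|\mathcal F_t]$. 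Thus $E[X_t|\mathcal F_t]$ is the only $L^2$-limit point of the relatively compact sequence $\{X_t^{n}\}_n$, and therefore $X_t^{n}\to E[X_t|\mathcal F_t]$ strongly in $L^2(\Omega;\R^d)$, in particular in probability.

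For any bounded continuous $h:\R^d\to\R$, extracting an a.s.-convergent subsequence and using continuity and boundedness of $h$ together with dominated convergence then gives $h(X_t^{n})\to h(E[X_t|\mathcal F_t])$ in $L^2(\Omega;\mathcal F_t)$, the $\mathcal F_t$-measurability of the limit being clear. Finally, for the Malliavin differentiability I would use \eqref{Mallest}: $\{D_{\cdot}X_t^{n}\}_n$ is bounded in the Hilbert space $L^2(\Omega\times[0,T];\R^{d\times d})$, hence has a weakly convergent subsequence $D_{\cdot}X_t^{n_k}\rightharpoonup Z$; since $X_t^{n_k}\to E[X_t|\mathcal F_t]$ strongly in $L^2(\Omega)$ and the graph of the closed operator $D$ is weakly closed (see e.g. \cite{Nua10}), it follows that $E[X_t|\mathcal F_t]$ lies in the domain $\mathbb{D}^{1,2}$ of the Malliavin derivative with $D_{\cdot}E[X_t|\mathcal F_t]=Z$.

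I expect the identification of the limit points to be the main obstacle: the compactness criterion by itself only delivers convergent subsequences with \emph{a priori} unknown limits, and it is Lemma~\ref{weakconv} --- combined with the truncation argument needed to pass from bounded test functions $\eta$ to the identity map --- that pins down the limit as $E[X_t|\mathcal F_t]$. One must also be mildly careful that $X_t^{n}$ itself (and not merely $\eta(X_t^{n})$) is uniformly square-integrable and $\mathcal{F}_t$-measurable and that $X_t\in L^2(\Omega,P)$, all of which ultimately rest on the Girsanov change of measure and the exponential moment estimates of the preceding lemmas.
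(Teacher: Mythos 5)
Your proof is correct and follows essentially the same route as the paper: relative compactness of $\{X_t^{n}\}_n$ via the Malliavin-calculus criterion of Appendix A together with Lemma \ref{relcomp}, identification of the unique limit point through the weak convergence of Lemma \ref{weakconv}, and Malliavin differentiability of $E[X_t|\mathcal{F}_t]$ from the uniform derivative bound \eqref{Mallest} and the weak closedness of the Malliavin derivative. The paper's own proof is a single sentence; your truncation argument passing from bounded test functions $\eta$ to the identity map, and your explicit verification of the uniform $L^2$ bound on $X_t^{n}$ required by the compactness criterion, merely supply details the paper leaves implicit.
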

\begin{proof}
This is an immediate consequence of the relative compactness from Lemma \ref{relcomp} and by Lemma \ref{weakconv} we can identify the limit as being $E[X_t|\mathcal{F}_t]$ then the convergence holds for any bounded continuous functions as well. The Malliavin differentiability of $E[X_t|\mathcal{F}_t]$ is shown by taking $h =I_d$ and estimate \eqref{Mallest} together with \cite[Proposition 1.2.3]{Nua10}.
\end{proof}

Finally, we can prove the main result of this section.

\begin{proof}[Proof of Theorem \ref{mainthm}]
It remains to prove that $X_t$ is $\mathcal{F}_t$-measurable for every $t\in [0,T]$. It follows that there exists a strong solution in the usual sense that is Malliavin differentiable. Indeed, let $h$ be a globally Lipschitz continuous function, then by Corollary \ref{L2conv} we have that
$$\varphi(X_t^{1/n}) \rightarrow \varphi(E[X_t|\mathcal{F}_t]), \ \ P-a.s.$$
as $n\to \infty$.

On the other hand, by Lemma \ref{weakconv} we also have
$$h (X_t^{1/n}) \rightarrow E\left[ \varphi(X_t)|\mathcal{F}_t\right]$$
weakly in $L^2(\Omega;\mathcal{F}_t)$ as $n\to \infty$. By the uniqueness of the limit we immediately have
$$h\left( E[X_t|\mathcal{F}_t] \right) = E\left[ h(X_t)|\mathcal{F}_t\right], \ \ P-a.s.$$
which implies that $X_t$ is $\mathcal{F}_t$-measurable for every $t\in [0,T]$.

Let us finally show that our strong solution has a continuous modification.
We observe that
\begin{eqnarray*}
&&E[\left\vert X_{t}^{x}-X_{s}^{x}\right\vert ^{m}] \\
&\leq &C_{d,m}(E[(\int_{s}^{t}\delta _{0}(X_{u}^{x})du)^{m}]+E[\left\vert
B_{t}^{H}-B_{s}^{H}\right\vert ^{m}]) \\
&\leq &C_{d,m}(E[(\int_{s}^{t}\delta _{0}(X_{u}^{x})du)^{m}]+\left\vert
t-s\right\vert ^{mH}).
\end{eqnarray*}%
On the other hand, we have that%
\begin{equation*}
E[(\int_{s}^{t}\delta _{0}(X_{u}^{x})du)^{m}]\leq E[(\int_{s}^{t}\delta
_{0}(B_{u}^{H}+x)du)^{2m}]^{1/2}E[X^{2}]^{1/2},
\end{equation*}%
where $X$ is the Radon-Nikodym derivative as constructed in Proposition \ref{weaksolution}.
Further, we know from (\ref{eq:BoundMomentLT}) for a similar estimate that
\begin{equation*}
E[(\int_{s}^{t}\delta _{0}(B_{u}^{H}+x)du)^{2m}]^{1/2}\leq
C_{d,m,H}\left\vert t-s\right\vert ^{\frac{m}{2}(1-Hd)}
\end{equation*}%
So 
\begin{equation*}
E[\left\vert X_{t}^{x}-X_{s}^{x}\right\vert ^{m}]\leq C(\left\vert
t-s\right\vert ^{\frac{m}{2}(1-Hd)}+\left\vert t-s\right\vert ^{mH}),s\leq
t,m\geq 1,
\end{equation*}%
which entails by Kolmogorov's Lemma the existence of a continuous
modification of $X_{\cdot }^{x}$.

\end{proof}

\begin{proof}[Proof of Proposition \ref{mainprop}]
Denote by $Y$ the $L^{p}-$limit of the Doleans-Dade
exponentials. Using characteristic functions combined with Novikov's
condition, we see that%
\begin{equation*}
Y_{t}^{x}-x=B_{t}^{H}+L_{t}(Y^{x})\boldsymbol{1}_{d}
\end{equation*}%
is a fractional Brownian motion under a change of measure with respect to
the density $Y$. The latter enables us to proceed similarly to arguments in
the proof of Lemma \ref{weakconv} and to verify that%
\begin{equation*}
E[Y_{t}^{x}\exp \{\sum_{j=1}^{k}\left\langle \alpha
_{j},B_{t_{j}}^{H}-B_{t_{j-1}}^{H}\right\rangle \}]=E[X_{t}^{x}\exp
\{\sum_{j=1}^{k}\left\langle \alpha
_{j},B_{t_{j}}^{H}-B_{t_{j-1}}^{H}\right\rangle \}]
\end{equation*}%
for all $\{\alpha _{j}\}_{j=1}^{k}\subset \mathbb{R}%
^{d},0=t_{0}<...<t_{k}=t,k\geq 1$, where $X_{\cdot }^{x}$ denotes the
constructed strong solution of our main theorem. This allows us conclude
that both solutions must coincide a.e.
\end{proof}

\appendix

\section{Technical results}
The following result which is due to \cite[Theorem 1] {DPMN92} provides a compactness criterion for subsets of $L^{2}(\Omega)$ using Malliavin calculus.

\begin{thm}
\label{MCompactness}Let $\left\{ \left( \Omega ,\mathcal{A},P\right)
;H\right\} $ be a Gaussian probability space, that is $\left( \Omega ,%
\mathcal{A},P\right) $ is a probability space and $H$ a separable closed
subspace of Gaussian random variables of $L^{2}(\Omega )$, which generate
the $\sigma $-field $\mathcal{A}$. Denote by $\mathbf{D}$ the derivative
operator acting on elementary smooth random variables in the sense that%
\begin{equation*}
\mathbf{D}(f(h_{1},\ldots,h_{n}))=\sum_{i=1}^{n}\partial
_{i}f(h_{1},\ldots,h_{n})h_{i},\text{ }h_{i}\in H,f\in C_{b}^{\infty }(\mathbb{R%
}^{n}).
\end{equation*}%
Further let $\mathbb{D}^{1,2}$ be the closure of the family of elementary
smooth random variables with respect to the norm%
\begin{align*}
\left\Vert F\right\Vert _{1,2}:=\left\Vert F\right\Vert _{L^{2}(\Omega
)}+\left\Vert \mathbf{D}F\right\Vert _{L^{2}(\Omega ;H)}.
\end{align*}%
Assume that $C$ is a self-adjoint compact operator on $H$ with dense image.
Then for any $c>0$ the set
\begin{equation*}
\mathcal{G}=\left\{ G\in \mathbb{D}^{1,2}:\left\Vert G\right\Vert
_{L^{2}(\Omega )}+\left\Vert C^{-1} \mathbf{D} \,G\right\Vert _{L^{2}(\Omega ;H)}\leq
c\right\}
\end{equation*}%
is relatively compact in $L^{2}(\Omega )$.
\end{thm}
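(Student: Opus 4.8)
The plan is to establish that $\mathcal{G}$ is totally bounded in the complete space $L^2(\Omega)$, exploiting the Wiener chaos decomposition and using the two hypotheses on $C$ in the two distinct places where they are needed. First I would apply the spectral theorem to the self-adjoint compact operator $C$ to fix an orthonormal basis $\{e_k\}_{k\ge 1}$ of $H$ with $Ce_k=\lambda_k e_k$; density of the range forces $\lambda_k\ne 0$ for all $k$ (so $C^{-1}$ is densely defined and the constraint defining $\mathcal{G}$ is meaningful), compactness forces $\lambda_k\to 0$, and we may assume $|\lambda_1|\ge|\lambda_2|\ge\cdots$. Writing each $G\in \mathbb{D}^{1,2}$ in its chaos expansion $G=\sum_{n\ge 0}I_n(f_n)$, $f_n\in H^{\odot n}$, we have the standard identities $\|G\|_{L^2(\Omega)}^2=\sum_{n\ge 0}n!\,\|f_n\|_{H^{\otimes n}}^2$ and $\|\mathbf{D}G\|_{L^2(\Omega;H)}^2=\sum_{n\ge 1}n\cdot n!\,\|f_n\|_{H^{\otimes n}}^2$.

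The first step (using only that $C$ is bounded, a consequence of compactness) controls the chaos tail: since $\|\mathbf{D}G\|_{L^2(\Omega;H)}\le\|C\|_{\mathrm{op}}\,\|C^{-1}\mathbf{D}G\|_{L^2(\Omega;H)}\le \|C\|_{\mathrm{op}}\,c$ on $\mathcal{G}$, the second identity gives
$$\sup_{G\in\mathcal{G}}\Big\|G-\textstyle\sum_{n\le N}I_n(f_n)\Big\|_{L^2(\Omega)}^2\le\frac{1}{N+1}\sup_{G\in\mathcal{G}}\|\mathbf{D}G\|_{L^2(\Omega;H)}^2\le\frac{(\|C\|_{\mathrm{op}}c)^2}{N+1}\xrightarrow[N\to\infty]{}0.$$
Denoting by $P_{\le N}$ (resp. $P_n$) the orthogonal projection onto chaoses of order $\le N$ (resp. onto the $n$-th chaos $\mathcal{H}_n$), this reduces the problem to showing that $P_{\le N}\mathcal{G}$ is relatively compact for each fixed $N$; and since $P_{\le N}\mathcal{G}$ sits inside the finite Minkowski sum $\sum_{n=0}^N\overline{\{P_nG:G\in\mathcal{G}\}}$, it suffices to prove that $\{P_nG:G\in\mathcal{G}\}$ is relatively compact in $\mathcal{H}_n$ for each fixed $n$, equivalently that $\{f_n:G\in\mathcal{G}\}$ is relatively compact in $H^{\odot n}$.

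The second step (now using $\lambda_k\to 0$) treats a fixed chaos level $n$. Expanding $f_n=\sum_{\mathbf{k}}a_{\mathbf{k}}\,e_{k_1}\otimes\cdots\otimes e_{k_n}$ with $(a_{\mathbf{k}})$ symmetric, and computing the $n$-th chaos contribution of $\|C^{-1}\mathbf{D}G\|_{L^2(\Omega;H)}^2$ — i.e. applying $C^{-1}$ to the $H$-slot produced by $\mathbf{D}$, which multiplies the coefficient by $\lambda_{k_n}^{-1}$ — one gets $n\cdot n!\sum_{\mathbf{k}}|a_{\mathbf{k}}|^2\lambda_{k_n}^{-2}$; using the symmetry of $a_{\mathbf{k}}$ and the monotonicity $\sum_{i=1}^n\lambda_{k_i}^{-2}\ge\lambda_{\max_i k_i}^{-2}$, this is $\ge n!\sum_{\mathbf{k}}|a_{\mathbf{k}}|^2\lambda_{\max_i k_i}^{-2}$. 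Hence, for every $G\in\mathcal{G}$, $\sum_{\mathbf{k}:\,\max_i k_i\ge M}|a_{\mathbf{k}}|^2\le \lambda_M^2\,c^2/n!\to 0$ as $M\to\infty$, uniformly over $\mathcal{G}$. Since $\{\mathbf{k}:\max_i k_i<M\}$ is finite, the set $\{f_n:G\in\mathcal{G}\}$ is, for each $M$, within an $\varepsilon_M$-neighborhood (with $\varepsilon_M\to 0$) of a bounded subset of a finite-dimensional subspace of $H^{\otimes n}$, hence totally bounded. Assembling: each $P_n\mathcal{G}$ is relatively compact, so each $P_{\le N}\mathcal{G}$ is relatively compact, and by the first step $\mathcal{G}$ lies in the $\varepsilon_N$-neighborhood of $P_{\le N}\mathcal{G}$ with $\varepsilon_N\to 0$; a set contained, for every $\varepsilon>0$, in the $\varepsilon$-neighborhood of a totally bounded set is itself totally bounded, which finishes the argument.

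The main obstacle I expect is the second step: converting the abstract bound $\|C^{-1}\mathbf{D}G\|_{L^2(\Omega;H)}\le c$ into a \emph{uniform} decay estimate on the spectral coefficients of $f_n$ requires careful bookkeeping of the symmetric-tensor structure of the $n$-th chaos and of the way $\mathbf{D}$ and $C^{-1}$ act on the basis $\{e_{k_1}\otimes\cdots\otimes e_{k_n}\}$; everything surrounding it is soft functional analysis. It is worth emphasising that the two hypotheses on $C$ play genuinely different roles — its boundedness handles the chaos tail in the first step, while the eigenvalue decay $\lambda_k\to 0$ is precisely what makes the within-level tail vanish in the second step, and density of the range is what makes $C^{-1}$ (hence the whole setup) meaningful.
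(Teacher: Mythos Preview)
The paper does not actually prove this theorem; it is quoted verbatim from Da~Prato--Malliavin--Nualart \cite{DPMN92} and used as a black box (see the sentence preceding the statement in Appendix~A). So there is no ``paper's own proof'' to compare against.

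Your argument is correct and is essentially the natural one. A couple of confirmations on the points you flagged as delicate: the identity for the $n$-th chaos contribution,
\[
\|C^{-1}\mathbf{D}I_n(f_n)\|_{L^2(\Omega;H)}^2
= n\cdot n!\sum_{\mathbf{k}}|a_{\mathbf{k}}|^2\,\lambda_{k_n}^{-2},
\]
is right (compute $\langle C^{-1}\mathbf{D}I_n(f_n),e_j\rangle_H=\lambda_j^{-1}\,nI_{n-1}(f_n(\cdot,e_j))$ and sum), and the symmetry averaging step $n\sum_{\mathbf{k}}|a_{\mathbf{k}}|^2\lambda_{k_n}^{-2}=\sum_{\mathbf{k}}|a_{\mathbf{k}}|^2\sum_i\lambda_{k_i}^{-2}\ge\sum_{\mathbf{k}}|a_{\mathbf{k}}|^2\lambda_{\max_i k_i}^{-2}$ is exactly what is needed to get the uniform spectral-tail bound. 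The only implicit point worth making explicit is the reading of the constraint defining $\mathcal{G}$: one interprets $\|C^{-1}\mathbf{D}G\|_{L^2(\Omega;H)}<\infty$ as $\mathbf{D}G(\omega)\in\mathrm{Dom}(C^{-1})$ for a.e.\ $\omega$ with the resulting random variable square-integrable, equivalently $\sum_k\lambda_k^{-2}E[\langle\mathbf{D}G,e_k\rangle_H^2]<\infty$; your computation uses precisely this. With that clarified, both steps go through and the total-boundedness assembly at the end is routine.
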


In order to formulate compactness criteria useful for our purposes, we need the following technical result which also can be found in \cite{DPMN92}.

\begin{lem}
\label{DaPMN} Let $v_{s},s\geq 0$ be the Haar basis of $L^{2}([0,T])$. For
any $0<\alpha <1/2$ define the operator $A_{\alpha }$ on $L^{2}([0,T])$ by%
\begin{equation*}
A_{\alpha }v_{s}=2^{k\alpha }v_{s}\text{, if }s=2^{k}+j\text{ }
\end{equation*}%
for $k\geq 0,0\leq j\leq 2^{k}$ and%
\begin{equation*}
A_{\alpha }1=1.
\end{equation*}%
Then for all $\beta $ with $\alpha <\beta <(1/2),$ there exists a constant $%
c_{1}$ such that%
\begin{equation*}
\left\Vert A_{\alpha }f\right\Vert \leq c_{1}\left\{ \left\Vert f\right\Vert
_{L^{2}([0,T])}+\left(\int_{0}^{T}\int_{0}^{T}\frac{\left|
f(t)-f(t^{\prime })\right|^2}{\left\vert t-t^{\prime }\right\vert
^{1+2\beta }}dt\,dt^{\prime }\right)^{1/2}\right\} .
\end{equation*}
\end{lem}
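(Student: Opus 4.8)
The plan is to diagonalise and reduce the whole inequality to a single-coefficient estimate for the Haar expansion. Write the Haar system on $[0,T]$ as $v_0$ (the normalised constant) together with $v_{k,j}$, $k\ge 0$, $0\le j<2^k$, where $v_{k,j}$ is supported on the dyadic interval $I_{k,j}\subset[0,T]$ of length $L_k:=T2^{-k}$, taking the value $+L_k^{-1/2}$ on its left half $I_{k,j}^-$ and $-L_k^{-1/2}$ on its right half $I_{k,j}^+$. Since $A_\alpha$ acts diagonally in this orthonormal basis, Parseval's identity gives
\begin{equation*}
\|A_\alpha f\|^2=|\langle f,v_0\rangle|^2+\sum_{k\ge 0}2^{2k\alpha}\sum_{j=0}^{2^k-1}|\langle f,v_{k,j}\rangle|^2 .
\end{equation*}
The term $|\langle f,v_0\rangle|^2$ is at most $\|f\|_{L^2([0,T])}^2$ by Bessel's inequality, so everything comes down to bounding the double sum by the Gagliardo-type quantity $[f]_\beta^2:=\int_0^T\int_0^T|f(t)-f(t')|^2|t-t'|^{-1-2\beta}\,dt\,dt'$.

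Next I would compute a single coefficient. Averaging $f$ over $I_{k,j}^\pm$ and using that these halves have equal length $L_k/2$, one gets
\begin{equation*}
\langle f,v_{k,j}\rangle=\frac{2}{L_k^{3/2}}\int_{I_{k,j}^-}\int_{I_{k,j}^+}\big(f(t)-f(s)\big)\,ds\,dt ,
\end{equation*}
so Cauchy--Schwarz in $(t,s)$ over the box $I_{k,j}^-\times I_{k,j}^+$ (of area $L_k^2/4$) yields $|\langle f,v_{k,j}\rangle|^2\le L_k^{-1}\int_{I_{k,j}^-}\int_{I_{k,j}^+}|f(t)-f(s)|^2\,ds\,dt$. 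Since $|t-s|\le L_k$ on that box, I can insert the weight $|t-s|^{-1-2\beta}$ at the cost of a factor $L_k^{1+2\beta}$, obtaining
\begin{equation*}
|\langle f,v_{k,j}\rangle|^2\le T^{2\beta}\,2^{-2k\beta}\int_{I_{k,j}^-}\int_{I_{k,j}^+}\frac{|f(t)-f(s)|^2}{|t-s|^{1+2\beta}}\,ds\,dt .
\end{equation*}

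Finally I would sum. For fixed $k$ the boxes $I_{k,j}^-\times I_{k,j}^+$, $j=0,\dots,2^k-1$, are pairwise disjoint subsets of $[0,T]^2$, so summing over $j$ only enlarges the domain of integration and gives $2^{2k\alpha}\sum_j|\langle f,v_{k,j}\rangle|^2\le T^{2\beta}2^{-2k(\beta-\alpha)}[f]_\beta^2$. Summing the geometric series $\sum_{k\ge 0}2^{-2k(\beta-\alpha)}=(1-2^{-2(\beta-\alpha)})^{-1}<\infty$ --- which converges precisely because $\alpha<\beta$ --- produces $\|A_\alpha f\|^2\le\|f\|_{L^2([0,T])}^2+c(\alpha,\beta,T)[f]_\beta^2$, and $\sqrt{a^2+b^2}\le a+b$ turns this into the asserted bound with $c_1=\sqrt{\max(1,c(\alpha,\beta,T))}$. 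No step here is a genuine obstacle: this is essentially the standard Haar-coefficient characterisation of the fractional Sobolev space $H^\beta([0,T])$, and the only point requiring care is the bookkeeping of the powers of $2^{-k}$ and $T$ in the coefficient estimate together with invoking the disjointness of the dyadic boxes at the right moment, so that the $j$-summation costs nothing.
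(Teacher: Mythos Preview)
Your proof is correct. The paper itself does not prove this lemma: it is quoted verbatim as a technical result from Da~Prato--Malliavin--Nualart \cite{DPMN92}, so there is no in-paper argument to compare against. What you wrote is precisely the standard proof one finds for this statement --- diagonalise via Parseval, represent each Haar coefficient as a double average of first differences over the two half-intervals, apply Cauchy--Schwarz, trade the sup-bound $|t-s|\le L_k$ for the Gagliardo weight at the cost of $L_k^{2\beta}$, then exploit disjointness of the boxes $I_{k,j}^-\times I_{k,j}^+$ across $j$ so that the $j$-sum is free, and finish with the geometric series in $k$ convergent exactly when $\alpha<\beta$. All the exponent bookkeeping checks out: $L_k^{-1}\cdot L_k^{1+2\beta}=L_k^{2\beta}=T^{2\beta}2^{-2k\beta}$, and the final constant $c(\alpha,\beta,T)=T^{2\beta}(1-2^{-2(\beta-\alpha)})^{-1}$ is exactly what the argument produces.
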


A direct consequence of Theorem \ref{MCompactness} and Lemma \ref{DaPMN} is now the following compactness criteria.

\begin{cor} \label{compactcrit}
Let a sequence of $\mathcal{F}_T$-measurable random variables $X_n\in\mathbb{D}^{1,2}$, $n=1,2...$, be such that there exists a constant $C>0$ with
$$
\sup_n E[|X_n|^2] \leq C ,
$$
$$
\sup_n E \left[ \| D_t X_n \|_{L^2([0,T])}^2 \right] \leq C \,
$$
and there exists a $\beta \in (0,1/2)$ such that
$$
\sup_n \int_0^T \int_0^T \frac{E \left[ \| D_t X_n - D_{t'} X_n \|^2 \right]}{|t-t'|^{1+2\beta}} dtdt' <\infty
$$
where $\|\cdot\|$ denotes any matrix norm.

Then the sequence $X_n$, $n=1,2...$, is relatively compact in $L^{2}(\Omega )$.
\end{cor}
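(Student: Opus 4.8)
The plan is to deduce this from Theorem \ref{MCompactness} by exhibiting a suitable self-adjoint compact operator $C$ on the Cameron--Martin space $H$ for which the hypotheses on the sequence $\{X_n\}$ translate into a uniform bound $\|X_n\|_{L^2(\Omega)} + \|C^{-1}\mathbf D X_n\|_{L^2(\Omega;H)} \leq c$. Recall that in our setting $H$ is $L^2([0,T];\R^d)$ (componentwise, the Cameron--Martin space of the Wiener process $W$ driving $B^H$), and the Malliavin derivative $D_t X_n$ is exactly the $H$-valued random variable whose norm appears in the hypotheses. So the task reduces to a purely deterministic question: produce a compact self-adjoint operator $C$ on $L^2([0,T])$ with dense image such that the inverse $C^{-1}$ is controlled by the Sobolev--Slobodeckij-type seminorm appearing in the third hypothesis.

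First I would take $C := A_\alpha^{-1}$, where $A_\alpha$ is the operator from Lemma \ref{DaPMN} for some fixed $\alpha$ with $0<\alpha<\beta<1/2$ (here $\beta$ is the exponent furnished by the hypotheses on $\{X_n\}$). Since $A_\alpha$ is diagonal in the Haar basis with eigenvalues $2^{k\alpha}\to\infty$, it is an unbounded self-adjoint positive operator with bounded inverse; hence $C=A_\alpha^{-1}$ is a self-adjoint, injective, bounded operator, and it is compact because its eigenvalues $2^{-k\alpha}$ tend to $0$. Its image is dense since $A_\alpha$ is densely defined with the finite Haar combinations in its domain. In the $d$-dimensional case one simply lets $C$ act coordinatewise on $L^2([0,T];\R^d)$, which preserves compactness, self-adjointness and density of the range. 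With this choice, $C^{-1} = A_\alpha$, so $\|C^{-1}\mathbf D X_n\|_{L^2(\Omega;H)}^2 = E[\|A_\alpha D_\cdot X_n\|_{L^2([0,T];\R^d)}^2]$.

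Next I would apply the estimate of Lemma \ref{DaPMN} pathwise (for each $\omega$, and summing over the finitely many matrix entries of $D_\cdot X_n$): for $\alpha<\beta<1/2$,
\begin{equation*}
\|A_\alpha D_\cdot X_n\|_{L^2([0,T])}^2 \leq 2c_1^2\left(\|D_\cdot X_n\|_{L^2([0,T])}^2 + \int_0^T\int_0^T \frac{\|D_t X_n - D_{t'}X_n\|^2}{|t-t'|^{1+2\beta}}\,dt\,dt'\right).
\end{equation*}
Taking expectations and invoking the second and third hypotheses, the right-hand side is bounded by a constant uniform in $n$; combined with the first hypothesis $\sup_n E[|X_n|^2]\le C$, we obtain $\sup_n\big(\|X_n\|_{L^2(\Omega)} + \|C^{-1}\mathbf D X_n\|_{L^2(\Omega;H)}\big)\le c$ for some finite $c$. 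Hence all $X_n$ lie in the set $\mathcal G$ of Theorem \ref{MCompactness} for that $c$ and that $C$, and Theorem \ref{MCompactness} yields that $\{X_n\}$ is relatively compact in $L^2(\Omega)$.

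The only genuinely delicate point is the bookkeeping in the multidimensional/matrix-valued case: one must check that applying $A_\alpha$ entrywise to the $\R^{d\times d}$-valued kernel $D_t X_n$, and using the equivalence of the chosen matrix norm with the Hilbert--Schmidt norm, still fits the scalar framework of Lemma \ref{DaPMN} and Theorem \ref{MCompactness}; this is routine since finite direct sums of compact self-adjoint operators with dense range again have those properties, and all finite-dimensional norms are equivalent. Everything else is a direct substitution into the two cited results.
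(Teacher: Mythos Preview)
Your proposal is correct and follows exactly the approach the paper intends: the paper states the corollary as ``a direct consequence of Theorem \ref{MCompactness} and Lemma \ref{DaPMN}'' without spelling out the argument, and your choice $C=A_\alpha^{-1}$ together with the pathwise application of Lemma \ref{DaPMN} is precisely the intended direct consequence. The only thing you add beyond the paper is the explicit verification that $A_\alpha^{-1}$ is compact, self-adjoint with dense image, and the componentwise extension to the matrix-valued case, all of which are indeed routine.
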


For the use of the above result we will need to exploit the following technical results which are extracted from \cite{BNP.16}.

\begin{lem}\label{doubleint}
Let $H \in (0,1/2)$ and $t\in [0,T]$ be fixed. Then, there exists a $\beta \in (0,1/2)$ such that
\begin{align}\label{intI}
\int_0^t \int_0^t \frac{|K_H(t,\theta') - K_H(t,\theta)|^2}{|\theta'-\theta|^{1+2\beta}}d\theta d\theta ' < \infty.
\end{align}
\end{lem}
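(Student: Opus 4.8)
The plan is to recognise that the double integral in \eqref{intI} equals, up to a normalising constant, the square of the Gagliardo--Slobodeckij seminorm $[K_H(t,\cdot)]_{W^{\beta,2}(0,t)}$, so the lemma amounts to proving that $\theta\mapsto K_H(t,\theta)$ lies in the fractional Sobolev space $W^{\beta,2}(0,t)$ for some $\beta\in(0,1/2)$. I would establish this for every $\beta\in(0,H)$, which, since $H\in(0,1/2)$, yields an admissible $\beta$.

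The key (and only nontrivial) input consists of the following pointwise bounds: $K_H(t,\cdot)\in C^1(0,t)$ and, for a constant $C_t$ depending on $H$, $t$,
\[
|K_H(t,\theta)|\le C_t\,\big(\theta(t-\theta)\big)^{H-\frac{1}{2}},\qquad |\partial_\theta K_H(t,\theta)|\le C_t\,\big(\theta(t-\theta)\big)^{H-\frac{3}{2}},\qquad \theta\in(0,t).
\]
Both follow by elementary computation from the explicit formula for $K_H(t,s)$ recalled in Section~\ref{Frame}: the summand $(t/\theta)^{H-1/2}(t-\theta)^{H-1/2}$ is clearly $C^\infty$ on $(0,t)$ and of the asserted size, while for the summand $(\tfrac{1}{2}-H)\theta^{1/2-H}\int_\theta^t u^{H-3/2}(u-\theta)^{H-1/2}du$ the substitution $u=\theta w$ turns the integral into $\theta^{2H-1}G(t/\theta)$ with $G$ bounded and smooth on $(1,\infty)$, and one then differentiates under the integral sign after the substitution $u=\theta+v$. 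Carrying out this asymptotic analysis---above all for $\partial_\theta K_H(t,\cdot)$, since the integrand $u^{H-3/2}(u-\theta)^{H-1/2}$ is singular at its lower endpoint $u=\theta$---is the main obstacle; everything else is routine bookkeeping. (This bound is also the content of the corresponding technical lemma in \cite{BNP.16}.)

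Granting the two estimates, fix $\beta\in(0,H)$ and, for a small $\delta\in(0,t/2)$, decompose $(0,t)^2$ into the four (overlapping) sets $\{|\theta-\theta'|\ge\delta\}$, $\{|\theta-\theta'|<\delta\}\cap[\delta/2,t-\delta/2]^2$, $(0,2\delta]^2$, and $[t-2\delta,t)^2$, whose union is all of $(0,t)^2$. On the first set the weight $|\theta-\theta'|^{-1-2\beta}$ is bounded, so $|K_H(t,\theta)-K_H(t,\theta')|^2\le 2K_H(t,\theta)^2+2K_H(t,\theta')^2$ together with the fact that $K_H(t,\cdot)\in L^2(0,t)$ (indeed $\|K_H(t,\cdot)\|_{L^2(0,t)}^2=R_H(t,t)=t^{2H}$ by \eqref{RH}) gives a finite contribution. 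On the second set $K_H(t,\cdot)$ is Lipschitz, so the integrand is $\lesssim|\theta-\theta'|^{1-2\beta}$, which is integrable. On the corner $(0,2\delta]^2$ (and symmetrically $[t-2\delta,t)^2$) one has $(t-\theta)^{H-1/2}\asymp1$, hence $|K_H(t,\theta)|\lesssim\theta^{H-1/2}$ and $|\partial_\theta K_H(t,\theta)|\lesssim\theta^{H-3/2}$; assuming $\theta<\theta'$, one splits into the case $\theta<\theta'/2$, where $|K_H(t,\theta)-K_H(t,\theta')|\lesssim\theta^{H-1/2}$ and $|\theta-\theta'|\asymp\theta'$, and the case $\theta'/2\le\theta<\theta'$, where the fundamental theorem of calculus and the derivative bound yield $|K_H(t,\theta)-K_H(t,\theta')|\lesssim\theta'^{H-3/2}|\theta-\theta'|$. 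In either case the double integral reduces to a product of one-dimensional power integrals that converge exactly when $\beta<H$. Adding the four contributions proves \eqref{intI}.
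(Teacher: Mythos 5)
Your argument is correct, and it takes a genuinely different route from the one in the paper. The paper works with a single two-point estimate: writing $K_H(t,\theta)-K_H(t,\theta')=c_H[f_t(\theta)-f_t(\theta')+(\tfrac12-H)(g_t(\theta)-g_t(\theta'))]$ and applying the elementary inequality $\frac{y^{-\alpha}-x^{-\alpha}}{(x-y)^{\gamma}}\leq Cy^{-\alpha-\gamma}$ (valid for $\gamma<\tfrac12-\alpha$) to both summands, it obtains the global weighted H\"older bound
\begin{align*}
(K_{H}(t,\theta )-K_{H}(t,\theta'))^{2}\leq C\,\frac{(\theta-\theta')^{2\gamma }}{(\theta \theta')^{2\gamma }}\,\theta^{2H-1-2\gamma }(t-\theta )^{2H-1-2\gamma },\qquad 0<\theta'<\theta<t,
\end{align*}
and then evaluates the resulting double integral exactly via Beta functions, choosing $\gamma$ and $\beta$ small. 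You instead prove the endpoint bounds $|K_H(t,\theta)|\lesssim(\theta(t-\theta))^{H-1/2}$ and $|\partial_\theta K_H(t,\theta)|\lesssim(\theta(t-\theta))^{H-3/2}$ and run a Gagliardo--Slobodeckij-type region decomposition (far off-diagonal via $L^2$-boundedness, interior diagonal via the Lipschitz property, corners via the dyadic split $\theta<\theta'/2$ versus $\theta'/2\leq\theta<\theta'$). All of your individual claims check out: the substitution $u=\theta w$ does give $\theta^{2H-1}G(t/\theta)$ with $G$ bounded and $C^1$ on $(1,\infty)$ (convergence at infinity uses exactly $H<1/2$), the identity $\|K_H(t,\cdot)\|_{L^2(0,t)}^2=R_H(t,t)=t^{2H}$ follows from \eqref{RH}, the four regions do cover $(0,t)^2$, and each corner integral reduces to $\int_0^{2\delta}(\theta')^{2H-1-2\beta}d\theta'$, finite precisely for $\beta<H$. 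Your method yields the sharper (indeed optimal) range $\beta\in(0,H)$, whereas the paper's constraints ($\beta<\gamma$, $2H-4\gamma-2\beta>0$, etc.) force a smaller $\beta$; both suffice for the statement. What the paper's formulation buys, and yours does not, is reusability: the same two-point bound and the same Beta-function computation are invoked again in the proof of Lemma \ref{relcomp} to establish the finiteness of the integrals \eqref{double2} and \eqref{double3}, which involve the weight $\left|\frac{\theta-\theta'}{\theta\theta'}\right|^{2\gamma}\theta^{2(H-\frac12-\gamma)}$ rather than the kernel difference itself; your decomposition, tailored to $K_H(t,\theta)-K_H(t,\theta')$, would not directly cover those.
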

\begin{proof}
Let $\theta,\theta'\in [0,t]$, $\theta'<\theta$ be fixed. Write
$$K_H (t,\theta) - K_H(t,\theta') = c_H\left[f_t(\theta) - f_t(\theta') + \left(\frac{1}{2}-H\right) \left(g_t(\theta) - g_t(\theta')\right)\right],$$
where $f_t (\theta):= \left(\frac{t}{\theta} \right)^{H-\frac{1}{2}} (t-\theta)^{H-\frac{1}{2}}$ and $g_t(\theta) := \int_\theta^t \frac{f_u (\theta)}{u}du$, $\theta\in [0,t]$.

We will proceed to estimating $K_H (t,\theta) - K_H(t,\theta')$. First, observe the following fact,
$$\frac{y^{-\alpha} -x^{-\alpha}}{(x-y)^{\gamma}} \leq C y^{-\alpha-\gamma}$$
for every $0<y<x<\infty$ and $\alpha :=(\frac{1}{2}-H) \in (0,1/2)$ and $\gamma < \frac{1}{2}-\alpha$. This implies
\begin{align*}
f_t(\theta) - f_t(\theta') &= \left(\frac{t}{\theta}  (t-\theta)\right)^{H-\frac{1}{2}}-\left(\frac{t}{\theta'} (t-\theta')\right)^{H-\frac{1}{2}}\\
&\leq C \left(\frac{t}{\theta}(t-\theta )\right)^{H-\frac{1}{2} -\gamma }t^{2\gamma }\frac{(\theta -\theta')^{\gamma }}{(\theta \theta')^{\gamma }}\\
&\leq C\frac{(\theta -\theta')^{\gamma }}{(\theta \theta')^{\gamma }}(t-\theta )^{H-\frac{1}{2}-\gamma } \\
&\leq C\frac{(\theta -\theta')^{\gamma }}{(\theta \theta
')^{\gamma }}\theta^{H-\frac{1}{2}-\gamma }(t-\theta )^{H-\frac{1}{2}-\gamma }.
\end{align*}

Further, 
\begin{align*}
g_{t}(\theta )-g_{t}(\theta') &= \int_{\theta }^{t}\frac{f_{u}(\theta )-f_{u}(\theta')}{u}du-\int_{\theta'}^{\theta }\frac{f_{u}(\theta')}{u}du \\
&\leq \int_{\theta }^{t}\frac{f_{u}(\theta )-f_{u}(\theta')}{u}du \\
&\leq C\frac{(\theta -\theta')^{\gamma }}{(\theta \theta')^{\gamma }}\int_{\theta }^{t}\frac{(u-\theta )^{H-\frac{1}{2}-\gamma }}{u}du \\
&\leq C\frac{(\theta -\theta')^{\gamma }}{(\theta \theta')^{\gamma }}\theta^{H-\frac{1}{2}-\gamma }\int_{1}^{\infty }\frac{(u-1)^{H-\frac{1}{2}-\gamma }}{u}du \\
&\leq C\frac{(\theta -\theta')^{\gamma }}{(\theta \theta')^{\gamma }}\theta^{H-\frac{1}{2}-\gamma } \\
&\leq C\frac{(\theta -\theta')^{\gamma }}{(\theta \theta')^{\gamma }}\theta^{H-\frac{1}{2}-\gamma }(t-\theta )^{H-\frac{1}{2}-\gamma }.
\end{align*}

As a result, we have for every $\gamma\in (0,H)$, $0<\theta'<\theta<t<T$,
\begin{align*}
(K_{H}(t,\theta )-K_{H}(t,\theta'))^{2}\leq C_{H,T}\frac{(\theta
-\theta')^{2\gamma }}{(\theta \theta')^{2\gamma }}\theta^{2H-1-2\gamma }(t-\theta )^{2H-1-2\gamma },
\end{align*}
for some constant $C_{H,T}>0$ depending only on $H$ and $T$.

Thus
\begin{align*}
\int_{0}^{t}\int_{0}^{\theta }&\frac{(K_{H}(t,\theta )-K_{H}(t,\theta'))^{2}}{| \theta -\theta|^{1+2\beta }}d\theta'd\theta  \\
&\leq C\int_{0}^{t}\int_{0}^{\theta }\frac{|\theta -\theta'|^{-1-2\beta +2\gamma }}{(\theta \theta')^{2\gamma }}\theta^{2H-1-2\gamma }(t-\theta )^{2H-1-2\gamma }d\theta'd\theta  \\
& =C\int_{0}^{t}\theta^{2H-1-4\gamma }(t-\theta )^{2H-1-2\gamma
}\int_{0}^{\theta }|\theta -\theta'|^{-1-2\beta +2\gamma }(\theta')^{-2\gamma }d\theta'd\theta  \\
&= C\int_{0}^{t}\theta^{2H-1-4\gamma }(t-\theta )^{2H-1-2\gamma }\frac{\Gamma (-2\beta +2\gamma )\Gamma (-2\gamma +1)}{\Gamma (-2\beta +1)}\theta
^{-2\beta }d\theta  \\
&\leq C\int_{0}^{t}\theta^{2H-1-4\gamma -2\beta }(t-\theta
)^{2H-1-2\gamma }d\theta  \\
&=C\frac{\Gamma (2H-2\gamma )\Gamma (2H-4\gamma -2\beta )}{\Gamma
(4H-6\gamma -2\beta )}t^{4H-6\gamma -2\beta -1}<\infty,
\end{align*}%
for appropriately chosen small $\gamma $ and $\beta$.

On the other hand, we have that
\begin{align*}
\int_{0}^{t}\int_{\theta }^{t}&\frac{(K_{H}(t,\theta )-K_{H}(t,\theta'))^{2}}{|\theta -\theta'|^{1+2\beta }}d\theta' d\theta  \\
&\leq C\int_{0}^{t}\theta^{2H-1-4\gamma }(t-\theta )^{2H-1-2\gamma
}\int_{\theta }^{t}\frac{|\theta -\theta'|^{-1-2\beta +2\gamma }}{(\theta')^{2\gamma }}d\theta'd\theta  \\
&\leq C\int_{0}^{t}\theta^{2H-1-6\gamma }(t-\theta )^{2H-1-2\gamma
} \int_{\theta}^t |\theta -\theta'|^{-1-2\beta +2\gamma } d\theta' d\theta  \\
&=C\int_{0}^{t}\theta^{2H-1-6\gamma }(t-\theta )^{2H-1 -2\beta
}d\theta  \\
&\leq Ct^{4H-6\gamma -2\beta -1}.
\end{align*}%
Hence
\begin{align*}
\int_{0}^{t}\int_{0}^{t}\frac{(K_{H}(t,\theta )-K_{H}(t,\theta'))^{2}}{|\theta -\theta'|^{1+2\beta }}%
d\theta' d\theta <\infty .
\end{align*}
\end{proof}

\begin{lem}
\label{lem:Integral}If $H<\frac{1}{2\left(1+d\right)}$ we have that
\begin{align*}
\mathcal{I} & :=\left(2m\right)!\int_{\mathcal{T}_{2m}\left(0,1\right)}\prod_{j=1}^{2m}\gamma_{-\frac{1}{2}-H,\frac{1}{2}-H}\left(1,u_{j}\right)\left(\det\mathrm{Cov}\left(B_{u_{1}}^{H,1},\ldots,B_{u_{2m}}^{H,1}\right)\right)^{-\frac{d}{2}}d\mathbf{u}\leq C_{H,d}^{m}\left(m!\right)^{2H\left(1+d\right)},
\end{align*}
for some constant $C_{H,d}$ depending only on $H$ and $d.$\end{lem}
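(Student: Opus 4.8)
The plan is to collapse $\mathcal{I}$ to a product of $2m$ Beta integrals by first invoking strong local non‑determinism and then integrating out the $u_j$ one at a time, starting from the largest. Since $\gamma_{-\frac12-H,\frac12-H}(1,u_j)=(1-u_j)^{-\frac12-H}u_j^{\frac12-H}$, since $u_j^{\frac12-H}\le 1$ on $\mathcal{T}_{2m}(0,1)$, and since — exactly as in the derivation of \eqref{eq:BoundMomentLT}, combining Lemmas~\ref{lem:detCov} and~\ref{lem:CondVar} with \eqref{eq:SLND} — one has $\det\mathrm{Cov}(B^{H,1}_{u_1},\dots,B^{H,1}_{u_{2m}})\ge K^{2m-1}\prod_{j=1}^{2m}(u_j-u_{j-1})^{2H}$ with $u_0:=0$, I would first bound
$$\mathcal{I}\le (2m)!\,K^{-d(2m-1)/2}\int_{\mathcal{T}_{2m}(0,1)}\prod_{j=1}^{2m}(1-u_j)^{-\frac12-H}(u_j-u_{j-1})^{-Hd}\,d\mathbf{u}.$$

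Next I would integrate out $u_{2m},u_{2m-1},\dots,u_1$ successively (all integrands being non‑negative, Tonelli applies). Set $\lambda:=\frac12-H(1+d)$, which is strictly positive precisely because $H<\frac{1}{2(1+d)}$, and $b_k:=-\frac12-H+k\lambda$. The claim is that when $u_k$ is integrated (with $u_1,\dots,u_{k-1}$ still present, $u_0=0$), the accumulated power of $(1-u_k)$ equals $b_{2m-k}$; the substitution $w=(u_k-u_{k-1})/(1-u_{k-1})$ then gives
$$\int_{u_{k-1}}^{1}(1-u_k)^{b_{2m-k}}(u_k-u_{k-1})^{-Hd}\,du_k=(1-u_{k-1})^{\,b_{2m-k}+1-Hd}\,\mathcal{B}(b_{2m-k}+1,\,1-Hd),$$
and combining the new exponent $b_{2m-k}+1-Hd$ with the original weight $(1-u_{k-1})^{-\frac12-H}$ produces $b_{2m-k}+\lambda=b_{2m-k+1}$, which propagates the recursion; for $k=1$ the factor $(1-u_0)=1$ is trivial and only the Beta constant survives. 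Every integral converges since $b_{2m-k}\ge b_0=-\frac12-H>-1$ (here $\lambda>0$ forces $b_0\le b_1\le\cdots$) and $1-Hd>0$. As $k=2m,\dots,1$ the quantity $b_{2m-k}+1$ runs through $j\lambda+Hd$, $j=1,\dots,2m$, so this yields
$$\mathcal{I}\le (2m)!\,K^{-d(2m-1)/2}\prod_{j=1}^{2m}\mathcal{B}(j\lambda+Hd,\,1-Hd).$$

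Finally I would estimate the Gamma factors: writing $\mathcal{B}(j\lambda+Hd,1-Hd)=\Gamma(1-Hd)\,\Gamma(j\lambda+Hd)/\Gamma(j\lambda+1)$ and using that there is a constant $C_\ast=C_\ast(H,d)$ with $\Gamma(x+Hd)/\Gamma(x+1)\le C_\ast\, x^{Hd-1}$ for all $x>0$ (Wendel's inequality, or simply because $x^{1-Hd}\Gamma(x+Hd)/\Gamma(x+1)$ extends continuously to $[0,\infty)$, vanishing at $0$ and tending to $1$ at infinity), one gets $\prod_{j=1}^{2m}\mathcal{B}(j\lambda+Hd,1-Hd)\le C^{2m}\big((2m)!\big)^{Hd-1}$ for some $C=C(H,d)$. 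Absorbing all factors that are $2m$‑th powers of constants into $C_{H,d}^{\,m}$ gives $\mathcal{I}\le C_{H,d}^{\,m}\,(2m)!\,\big((2m)!\big)^{Hd-1}=C_{H,d}^{\,m}\big((2m)!\big)^{Hd}$, and $(2m)!\le 4^m(m!)^2$ together with $2Hd\le 2H(1+d)$ and $m!\ge 1$ then yields the stated bound. The hard part is this last step: the crude estimate $\Gamma(x+Hd)\le C\,\Gamma(x+1)$ would inflate the product by a factor comparable to $(2m)!$ and wreck the estimate, so one must use the sharp asymptotics $\Gamma(x+Hd)/\Gamma(x+1)\sim x^{Hd-1}$; checking integrability of the successive one‑dimensional integrals is the other place where the hypothesis $H<\frac{1}{2(1+d)}$ is essential.
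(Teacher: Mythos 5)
Your proof is correct and follows essentially the same route as the paper's: the strong local non-determinism lower bound $\det\mathrm{Cov}\left(B^{H,1}_{u_1},\dots,B^{H,1}_{u_{2m}}\right)\geq K^{2m-1}\prod_{j}(u_j-u_{j-1})^{2H}$, reduction to an iterated Beta integral over the simplex, and Wendel-type Gamma asymptotics to extract the factorial growth. The only differences are in the bookkeeping: by discarding $u_j^{\frac12-H}\leq 1$ and integrating from $u_{2m}$ downward with the substitution $w=(u_k-u_{k-1})/(1-u_{k-1})$, you avoid the paper's substitution $u_1=u_2v$ together with the bound $(1-u_2v)^{-\frac12-H}\leq(1-v)^{-\frac12-H}$, and you arrive at the marginally sharper exponent $(m!)^{2Hd}$ in place of $(m!)^{2H(1+d)}$, which of course still implies the stated estimate.
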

\begin{proof}
We have that
\begin{align*}
\mathcal{I} & =\left(2m\right)!\int_{\mathcal{T}_{2m}\left(0,1\right)}\prod_{j=1}^{2m}\gamma_{-\frac{1}{2}-H,\frac{1}{2}-H}\left(1,u_{j}\right)\left(\det\mathrm{Cov}\left(B_{u_{1}}^{H,1},\ldots,B_{u_{2m}}^{H,1}\right)\right)^{-\frac{d}{2}}d\mathbf{u}\\
 & \leq\left(2m\right)!\int_{\mathcal{T}_{2m}\left(0,1\right)}\gamma_{-\frac{1}{2}-H,\frac{1}{2}-H}\left(1,u_{1}\right)u_{1}^{-Hd}\prod_{j=2}^{2m}\gamma_{-\frac{1}{2}-H,\frac{1}{2}-H}\left(1,u_{j}\right)\left(u_{j}-u_{j-1}\right)^{-Hd}d\mathbf{u}\\
 & \leq\left(2m\right)!\int_{0}^{1}\int_{0}^{u_{2m}}\cdots\int_{0}^{u_{3}}\left(\gamma_{H}\left(1,u_{2}\right)\prod_{j=3}^{2m}\gamma_{-\frac{1}{2}-H,\frac{1}{2}-H}\left(1,u_{j}\right)\left(u_{j}-u_{j-1}\right)^{-Hd}\right)\\
 & \qquad\times\left(\int_{0}^{u_{2}}\gamma_{-\frac{1}{2}-H,\frac{1}{2}-H}\left(1,u_{1}\right)u_{1}^{-Hd}\left(u_{2}-u_{1}\right)^{-Hd}du_{1}\right)du_{2}\cdots du_{2m}.
\end{align*}
The inner integral can be bounded by
\begin{align*}
 & \int_{0}^{u_{2}}\gamma_{-\frac{1}{2}-H,\frac{1}{2}-H}\left(1,u_{1}\right)u_{1}^{-Hd}\left(u_{2}-u_{1}\right)^{-Hd}du_{1}\\
 & =u_{2}^{\frac{3}{2}-2Hd-H}\int_{0}^{1}\gamma_{-Hd,\frac{1}{2}-H\left(1+d\right)}\left(1,u_{1}\right)\left(1-u_{2}u_{1}\right)^{-\frac{1}{2}-H}du_{1}\\
 & \leq u_{2}^{\frac{3}{2}-2Hd-H}\int_{0}^{1}\gamma_{-\frac{1}{2}-H\left(1+d\right),\frac{1}{2}-H\left(1+d\right)}\left(1,u_{1}\right)du_{1}\\
 & =u_{2}^{\frac{3}{2}-2Hd-H}\mathcal{B}\left(\frac{1}{2}-H\left(1+d\right),\frac{3}{2}-H\left(1+d\right)\right),
\end{align*}
where we have used that $\left(1-u_{2}u_{1}\right)^{-\frac{1}{2}-H}\leq\left(1-u_{1}\right)^{-\frac{1}{2}-H}$. Hence,
\begin{align*}
\mathcal{I} & \leq\left(2m\right)!\int_{0}^{1}\int_{0}^{u_{2m}}\cdots\int_{0}^{u_{4}}\left(\gamma_{-\frac{1}{2}-H,\frac{1}{2}-H}\left(1,u_{3}\right)\prod_{j=4}^{2m}\gamma_{-\frac{1}{2}-H,\frac{1}{2}-H}\left(1,u_{j}\right)\left(u_{j}-u_{j-1}\right)^{-Hd}\right)\\
 & \qquad\times\left(\int_{0}^{u_{3}}\gamma_{-\frac{1}{2}-H,2-2Hd-2H}\left(1,u_{2}\right)\left(u_{3}-u_{2}\right)^{-Hd}du_{2}\right)du_{3}\cdots du_{2m}\\
 & \qquad\times\mathcal{B}\left(\frac{1}{2}-H\left(1+d\right),\frac{3}{2}-H\left(1+d\right)\right).
\end{align*}
The inner integral can be bounded by
\begin{align*}
 & \int_{0}^{u_{3}}\gamma_{-\frac{1}{2}-H,2-2Hd-2H}\left(1,u_{2}\right)\left(u_{3}-u_{2}\right)^{-Hd}du_{2}\\
 & =u_{3}^{3-3Hd-2H}\int_{0}^{1}\gamma_{-Hd,2-2Hd-2H}\left(1,u_{2}\right)\left(1-u_{3}u_{2}\right)^{-\frac{1}{2}-H}du_{2}\\
 & \leq u_{3}^{3-3Hd-2H}\int_{0}^{1}\gamma_{-\frac{1}{2}-H\left(1+d\right),2-2Hd-2H}\left(1,u_{2}\right)du_{2}\\
 & =u_{3}^{3-3Hd-2H}\mathcal{B}\left(\frac{1}{2}-H\left(1+d\right),3-2H\left(1+d\right)\right),
\end{align*}
and we get
\begin{align*}
\mathcal{I} & \leq\left(2m\right)!\int_{0}^{1}\int_{0}^{u_{2m}}\cdots\int_{0}^{u_{5}}\left(\gamma_{-\frac{1}{2}-H,\frac{1}{2}-H}\left(1,u_{4}\right)\prod_{j=5}^{2m}\gamma_{-\frac{1}{2}-H,\frac{1}{2}-H}\left(1,u_{j}\right)\left(u_{j}-u_{j-1}\right)^{-Hd}\right)\\
 & \qquad\times\left(\int_{0}^{u_{4}}\gamma_{-\frac{1}{2}-H,\frac{7}{2}-3H\left(1+d\right)}\left(1,u_{3}\right)\left(u_{4}-u_{3}\right)^{-Hd}du_{3}\right)du_{4}\cdots du_{2m}\\
 & \qquad\times\mathcal{B}\left(\frac{1}{2}-H\left(1+d\right),\frac{3}{2}-H\left(1+d\right)\right)\mathcal{B}\left(\frac{1}{2}-H\left(1+d\right),3-2H\left(1+d\right)\right).
\end{align*}
Iterating the previous reasoning we have that
\begin{align*}
\mathcal{I} & \leq\left(2m\right)!\prod_{j=1}^{2m}\mathcal{B}\left(\frac{1}{2}-H\left(1+d\right),j\left(\frac{3}{2}-H\left(1+d\right)\right)\right)\\
 & =\left(2m\right)!\prod_{j=1}^{2m}\frac{\Gamma\left(\frac{1}{2}-H\left(1+d\right)\right)\Gamma\left(j\left(\frac{3}{2}-H\left(1+d\right)\right)\right)}{\Gamma\left(\frac{1}{2}-H\left(1+d\right)+j\left(\frac{3}{2}-H\left(1+d\right)\right)\right)}\\
 & =\left(\Gamma\left(\frac{1}{2}-H\left(1+d\right)\right)\right)^{2m}\frac{\Gamma\left(\frac{3}{2}-H\left(1+d\right)\right)\left(2m\right)!}{\Gamma\left(\frac{1}{2}-H\left(1+d\right)+2m\left(\frac{3}{2}-H\left(1+d\right)\right)\right)}\\
 & \qquad\times\prod_{j=1}^{2m-1}\frac{\Gamma\left(1+\frac{1}{2}-H\left(1+d\right)+j\left(\frac{3}{2}-H\left(1+d\right)\right)\right)}{\Gamma\left(\frac{1}{2}-H\left(1+d\right)+j\left(\frac{3}{2}-H\left(1+d\right)\right)\right)}\\
 & =\left(\Gamma\left(\frac{1}{2}-H\left(1+d\right)\right)\right)^{2m}\frac{\Gamma\left(\frac{3}{2}-H\left(1+d\right)\right)\left(2m\right)!}{\Gamma\left(\frac{1}{2}-H\left(1+d\right)+2m\left(\frac{3}{2}-H\left(1+d\right)\right)\right)}\\
 & \qquad\times\prod_{j=1}^{2m-1}\left(\frac{1}{2}-H\left(1+d\right)+j\left(\frac{3}{2}-H\left(1+d\right)\right)\right)\\
 & =\left(\Gamma\left(\frac{1}{2}-H\left(1+d\right)\right)\right)^{2m}\frac{\Gamma\left(\frac{3}{2}-H\left(1+d\right)\right)\Gamma\left(2m+1\right)}{\Gamma\left(-\left(\frac{1}{2}+H\left(1+d\right)\right)+m\left(3-2H\left(1+d\right)\right)+1\right)}\\
 & \qquad\times\left(\frac{3}{2}-H\left(1+d\right)\right)^{2m-1}\frac{\Gamma\left(-\frac{2}{3-2H\left(1+d\right)}+2m+1\right)}{\Gamma\left(\frac{4\left(1-H(1+d)\right)}{3-2H\left(1+d\right)}\right)}.
\end{align*}
Next, taking into account the following asymptotics, see Wendel \cite{W48},
\begin{align*}
\Gamma\left(m+\lambda\right) & \sim m^{\lambda}\Gamma\left(m\right),\\
\Gamma\left(\lambda m+1\right) & \sim\lambda^{\frac{1}{2}}\left(2\pi\right)^{\frac{1-\lambda}{2}}\lambda^{\lambda m}m^{\frac{1-\lambda}{2}}\left(m!\right)^{\lambda},
\end{align*}
we get that
\begin{align*}
\Gamma\left(2m+1\right) & \sim2^{\frac{1}{2}}\left(2\pi\right)^{-\frac{1}{2}}4^{m}m^{-\frac{1}{2}}\left(m!\right)^{2},\\
\Gamma\left(-\left(\frac{1}{2}+H\left(1+d\right)\right)+m\left(3-2H\left(1+d\right)\right)+1\right) & \sim\left(m\left(3-2H\left(1+d\right)\right)+1\right)^{-\left(\frac{1}{2}+H(1+d)\right)}\\
 & \qquad\times\Gamma\left(m\left(3-2H\left(1+d\right)\right)+1\right)\\
 & \sim C_{H,d}K_{H,d}^{m}\left(m\right)^{-\frac{3}{2}}\left(m!\right)^{3-2H(1+d)},\\
\Gamma\left(-\frac{2}{3-2H\left(1+d\right)}+2m+1\right) & \sim\left(2m+1\right)^{-\frac{2}{3-2H\left(1+d\right)}}\Gamma\left(2m+1\right)\\
 & \sim C'_{H,d}\left(K'_{H,d}\right)^{m}\left(m\right)^{-\frac{2}{3-2H\left(1+d\right)}-\frac{1}{2}}\left(m!\right)^{2}
\end{align*}
which yields
\[
\frac{\Gamma\left(2m+1\right)\Gamma\left(-\frac{2}{3-2H\left(1+d\right)}+2m+1\right)}{\Gamma\left(-\left(\frac{1}{2}+H\left(1+d\right)\right)+m\left(3-2H\left(1+d\right)\right)+1\right)}\sim C''_{H,d}\left(K''_{H,d}\right)^{m}\left(m\right)^{-\frac{1+2H(1+d)}{6-4H\left(1+d\right)}}\left(m!\right)^{2H\left(1+d\right)},
\]
and the result follows.
\end{proof}

The next auxiliary result can be found in \cite{LiWei}. 

\begin{lem}\label{LiWei}
Assume that $X_{1},...,X_{n}$ are real centered jointly Gaussian
random variables, and $\Sigma =(E[XjX_{k}])_{1\leq j,k\leq n}$ is the
covariance matrix, then%
\begin{equation*}
E[\left\vert X_{1}\right\vert ...\left\vert X_{n}\right\vert ]\leq \sqrt{%
perm(\Sigma )},
\end{equation*}%
where $perm(A)$ is the permanent of a matrix $A=(a_{ij})_{1\leq i,j\leq n}$
defined by%
\begin{equation*}
perm(A)=\sum_{\pi \in S_{n}}\prod_{j=1}^{n}a _{j,\pi (j)}
\end{equation*}%
for the symmetric group $S_{n}$.

\end{lem}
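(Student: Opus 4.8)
The plan is to deduce the inequality from the classical identity expressing the moments of a circularly symmetric complex Gaussian vector through the permanent of its Hermitian covariance, by means of a complexification (symmetrization) trick. First I would take an independent copy $(X_{1}',\dots,X_{n}')$ of the vector $(X_{1},\dots,X_{n})$, defined on a (possibly enlarged) probability space, and set $\zeta_{j}:=X_{j}+\im X_{j}'$ for $j=1,\dots,n$. A short computation, using only that the two copies are independent and identically distributed, gives $E[\zeta_{j}\zeta_{k}]=E[X_{j}X_{k}]-E[X_{j}'X_{k}']+\im(E[X_{j}X_{k}']+E[X_{j}'X_{k}])=0$ and $E[\zeta_{j}\overline{\zeta_{k}}]=E[X_{j}X_{k}]+E[X_{j}'X_{k}']=2\Sigma_{jk}$ for all $j,k$; hence $\zeta=(\zeta_{1},\dots,\zeta_{n})$ is a centered, circularly symmetric complex Gaussian vector with Hermitian covariance matrix $2\Sigma$.

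Next I would invoke the complex Wick (Isserlis) theorem for such a vector, which in the circular case yields
\begin{align*}
E\Big[\prod_{j=1}^{n}|\zeta_{j}|^{2}\Big] &= E\big[\zeta_{1}\cdots\zeta_{n}\,\overline{\zeta_{1}}\cdots\overline{\zeta_{n}}\big] \\
&= \sum_{\pi\in S_{n}}\prod_{j=1}^{n}(2\Sigma)_{j,\pi(j)} = perm(2\Sigma) = 2^{n}\,perm(\Sigma),
\end{align*}
the last equality holding because each summand in the permanent is a product of exactly $n$ entries of the matrix. This moment formula is classical; if one prefers to stay real, it can be obtained by applying the usual (real) Wick theorem to the $2n$ Gaussian variables $X_{1},X_{1}',\dots,X_{n},X_{n}'$ and bookkeeping which pairings survive, but it is standard enough to quote directly.

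The concluding step is an elementary pointwise estimate combined again with the independence of the two copies. From $|\zeta_{j}|^{2}=X_{j}^{2}+(X_{j}')^{2}\geq 2|X_{j}|\,|X_{j}'|$ (arithmetic--geometric mean inequality), taking the product over $j$ and then expectations gives
\begin{align*}
2^{n}\,perm(\Sigma) = E\Big[\prod_{j=1}^{n}|\zeta_{j}|^{2}\Big] &\geq 2^{n}\,E\Big[\prod_{j=1}^{n}|X_{j}|\,|X_{j}'|\Big] \\
&= 2^{n}\,E\Big[\prod_{j=1}^{n}|X_{j}|\Big]\,E\Big[\prod_{j=1}^{n}|X_{j}'|\Big] = 2^{n}\Big(E\big[\textstyle\prod_{j=1}^{n}|X_{j}|\big]\Big)^{2},
\end{align*}
where the middle equality uses independence and the last one uses that $(X_{j}')$ has the same law as $(X_{j})$. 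Dividing by $2^{n}$ and taking square roots yields $E[|X_{1}|\cdots|X_{n}|]\leq\sqrt{perm(\Sigma)}$, as claimed.

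I expect the only delicate point to be the complex moment identity: one must check that the ``anomalous'' contractions $E[\zeta_{j}\zeta_{k}]$ genuinely vanish, so that the Wick expansion collapses to a sum over matchings of the $\zeta$'s with the $\overline{\zeta}$'s and hence to the permanent rather than to a larger sum, and one should note that the formula is valid for an arbitrary positive semidefinite covariance (in particular the degenerate case $\det\Sigma=0$ is allowed, by a continuity argument). The remaining steps are routine.
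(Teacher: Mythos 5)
Your proof is correct. Note first that the paper does not actually prove this lemma: it is quoted verbatim from the cited reference of Li and Wei, so there is no internal argument to compare against. Your complexification route --- pass to an independent copy, form $\zeta_j=X_j+\mathbf{i}X_j'$, check that $E[\zeta_j\zeta_k]=0$ and $E[\zeta_j\overline{\zeta_k}]=2\Sigma_{jk}$, apply the circular complex Wick/Isserlis identity to get $E[\prod_j|\zeta_j|^2]=\mathrm{perm}(2\Sigma)=2^n\,\mathrm{perm}(\Sigma)$, and then combine $|\zeta_j|^2\geq 2|X_j||X_j'|$ with independence of the two copies --- is exactly the standard (and, as far as I can tell, the original Li--Wei) derivation of this permanent bound. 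Each step checks out: the anomalous contractions $E[\zeta_j\zeta_k]$ do vanish because the two copies are i.i.d.\ and centered, so in the real Wick expansion over the $2n$ variables $X_1,X_1',\dots,X_n,X_n'$ only the $\zeta$--$\overline{\zeta}$ pairings survive and the sum collapses to the permanent; and the final factorization $E[\prod_j|X_j||X_j'|]=(E[\prod_j|X_j|])^2$ is legitimate. Two minor remarks: the detour through plain Cauchy--Schwarz, i.e.\ bounding $(E[\prod_j|X_j|])^2$ by $E[\prod_j X_j^2]$, would \emph{not} suffice, since the real Wick expansion of $E[\prod_j X_j^2]$ is a sum over all perfect matchings of $2n$ points and strictly exceeds $\mathrm{perm}(\Sigma)$ in general (already for $n=2$ it gives $\Sigma_{11}\Sigma_{22}+2\Sigma_{12}^2$ versus $\Sigma_{11}\Sigma_{22}+\Sigma_{12}^2$), so the AM--GM step on $|\zeta_j|^2$ is genuinely needed; and no continuity argument is required for degenerate $\Sigma$, because Wick's theorem is a purely algebraic moment identity valid for any positive semidefinite covariance.
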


The next result corresponds to Lemma 3.19 in \cite{CD}:
\begin{lem}\label{CD}
Let $Z_{1},...,Z_{n}$ be mean zero Gaussian variables which are linearly
independent. Then for any measurable function $g:\mathbb{R}\longrightarrow 
\mathbb{R}_{+}$ we have that%
\begin{equation*}
\int_{\mathbb{R}^{n}}g(v_{1})\exp (-\frac{1}{2}Var[%
\sum_{j=1}^{n}v_{j}Z_{j}])dv_{1}...dv_{n}=\frac{(2\pi )^{(n-1)/2}}{(\det
Cov(Z_{1},...,Z_{n}))^{1/2}}\int_{\mathbb{R}}g(\frac{v}{\sigma _{1}})\exp (-%
\frac{1}{2}v^{2})dv,
\end{equation*}%
where $\sigma _{1}^{2}:=Var[Z_{1}\left\vert Z_{2},...,Z_{n}\right] $.

\end{lem}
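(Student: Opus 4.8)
The plan is to reduce the $n$-dimensional integral to a one-dimensional one by integrating out the variables $v_2,\dots,v_n$ via a Gaussian completion of squares; the key point is that the quadratic form which remains in $v_1$ has leading coefficient exactly the conditional variance $\sigma_1^2$.

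Since $Z_1,\dots,Z_n$ are linearly independent, the covariance matrix $\Sigma:=\mathrm{Cov}(Z_1,\dots,Z_n)$ is symmetric positive definite and $\mathrm{Var}\left[\sum_{j=1}^{n}v_jZ_j\right]=v^{T}\Sigma v$ for $v=(v_1,\dots,v_n)^{T}$; because $g\ge 0$, Tonelli's theorem allows integrating in any order and no integrability hypothesis is needed. I would write $\Sigma$ in block form with scalar $(1,1)$-entry $\Sigma_{11}=\mathrm{Var}[Z_1]$, off-diagonal block $b=(\mathrm{Cov}(Z_1,Z_j))_{2\le j\le n}$ and lower-right block $A=\mathrm{Cov}(Z_2,\dots,Z_n)$, which is itself positive definite. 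Setting $w=(v_2,\dots,v_n)^{T}$ and completing the square in $w$ gives
$$v^{T}\Sigma v=\Sigma_{11}v_1^{2}+2v_1b^{T}w+w^{T}Aw=\sigma_1^{2}v_1^{2}+(w+v_1A^{-1}b)^{T}A(w+v_1A^{-1}b),$$
with $\sigma_1^{2}:=\Sigma_{11}-b^{T}A^{-1}b$ the Schur complement of $A$ in $\Sigma$. Two classical facts about jointly Gaussian vectors are then used: the Schur complement coincides with the conditional variance, so $\sigma_1^{2}=\mathrm{Var}[Z_1\,|\,Z_2,\dots,Z_n]$ in the notation of the statement; and, by Lemma \ref{lem:detCov} applied after moving $Z_1$ to the last coordinate (which leaves the determinant unchanged), $\det\Sigma=\sigma_1^{2}\det A$.

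Granting this, I integrate out $w$ over $\mathbb{R}^{n-1}$: the shift by $v_1A^{-1}b$ is immaterial and the resulting centered Gaussian integral equals $(2\pi)^{(n-1)/2}(\det A)^{-1/2}$, so the left-hand side becomes $(2\pi)^{(n-1)/2}(\det A)^{-1/2}\int_{\mathbb{R}}g(v_1)\exp(-\tfrac12\sigma_1^{2}v_1^{2})\,dv_1$. The substitution $v=\sigma_1v_1$ (legitimate since $\sigma_1>0$) turns the remaining one-dimensional integral into $\sigma_1^{-1}\int_{\mathbb{R}}g(v/\sigma_1)\exp(-\tfrac12v^{2})\,dv$, and the identity $(\det A)^{-1/2}\sigma_1^{-1}=(\sigma_1^{2}\det A)^{-1/2}=(\det\Sigma)^{-1/2}$ produces exactly the asserted formula. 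There is no real obstacle in this argument; the only points requiring a word of justification are the two Gaussian identities just mentioned, both standard and both obtainable at once from the Cholesky factorization of $\Sigma$, and the role of the hypothesis $g\ge 0$, which is precisely what legitimizes the change in the order of integration.
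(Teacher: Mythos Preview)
Your proof is correct. The paper does not actually supply a proof of this lemma: it is stated without argument as ``correspond[ing] to Lemma 3.19 in \cite{CD}'' (Cuzick and DuPreez, \emph{Joint continuity of Gaussian local times}, Ann.\ Probab.\ 1982), so there is nothing in the paper to compare against beyond the citation. Your route---block decomposition of $\Sigma$, completion of the square in $(v_2,\dots,v_n)$, identification of the Schur complement $\Sigma_{11}-b^{T}A^{-1}b$ with the conditional variance, and the determinant factorization $\det\Sigma=\sigma_1^{2}\det A$ via Lemma~\ref{lem:detCov}---is the standard derivation of this identity and is self-contained.
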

The following Lemma is Lemma A.5 in \cite{BNP.16}: 
\begin{lem}\label{VI_iterativeInt}
Let $H \in (0,1/2)$, $\theta,t\in [0,T]$, $\theta<t$ and $(\varepsilon_1,\dots, \varepsilon_{m})\in \{0,1\}^{m}$ be fixed. Assume $w_j+\left(H-\frac{1}{2}-\gamma\right) \varepsilon_j>-1$ for all $j=1,\dots,m$. Then exists a finite constant $C=C(H,T)>0$ such that
\begin{align*}
\int_{\Delta_{\theta,t}^{m}}  &\prod_{j=1}^{m} (K_H(s_j,\theta) - K_H(s_j,\theta'))^{\varepsilon_j} |s_j-s_{j-1}|^{w_j} ds\\
\leq& C^m \left(\frac{\theta-\theta'}{\theta \theta'}\right)^{\gamma \sum_{j=1}^m \varepsilon_j} \theta^{\left( H-\frac{1}{2}-\gamma\right)\sum_{j=1}^m \varepsilon_j} \,  \Pi_{\gamma}(m) \, (t-\theta)^{\sum_{j=1}^m w_j + \left( H-\frac{1}{2}-\gamma\right) \sum_{j=1}^m \varepsilon_j +m}
\end{align*}
for $\gamma \in (0,H)$, where
\begin{align}\label{VI_Pi}
\Pi_{\gamma}(m):=\prod_{j=1}^{m-1} \frac{\Gamma \left(\sum_{l=1}^{j} w_l + \left(H-\frac{1}{2}-\gamma \right)\sum_{l=1}^{j} \varepsilon_l + j\right)\Gamma \left( w_{j+1}+1\right)}{\Gamma \left( \sum_{l=1}^{j+1} w_l + \left(H-\frac{1}{2}-\gamma \right)\sum_{l=1}^{j} \varepsilon_l + j+1\right)}.
\end{align}
Observe that if $\varepsilon_j=0$ for all $j=1,\dots, m$ we obtain the classical formula.
\end{lem}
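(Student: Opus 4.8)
The plan is to prove the estimate by iterated one‑dimensional integration over the simplex $\Delta_{\theta,t}^m$, after first replacing each kernel difference $K_H(s_j,\theta)-K_H(s_j,\theta')$ appearing with exponent $\varepsilon_j=1$ by an explicit elementary majorant. So the proof has two ingredients: a pointwise bound on $K_H(s,\theta)-K_H(s,\theta')$, and the classical ``Dirichlet'' (iterated Beta) integral computation, with exponents perturbed by the kernel bound.

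\emph{Step 1 (pointwise kernel bound).} Running through the argument inside the proof of Lemma~\ref{doubleint} with $t$ there replaced by a free variable $s$ and without taking squares — splitting $K_H(s,\theta)-K_H(s,\theta')=c_H\big[f_s(\theta)-f_s(\theta')+(\tfrac12-H)\,(g_s(\theta)-g_s(\theta'))\big]$ with $f_s(\theta)=(s/\theta)^{H-1/2}(s-\theta)^{H-1/2}$ and $g_s(\theta)=\int_\theta^s f_u(\theta)u^{-1}\,du$, and using the elementary inequality $\frac{y^{-\alpha}-x^{-\alpha}}{(x-y)^{\gamma}}\le C\,y^{-\alpha-\gamma}$ with $\alpha=\tfrac12-H$ — one obtains, for every $\gamma\in(0,H)$ and all $0<\theta'<\theta<s\le T$,
\[
|K_H(s,\theta)-K_H(s,\theta')|\;\le\;C_{H,T}\Big(\tfrac{\theta-\theta'}{\theta\theta'}\Big)^{\gamma}\theta^{\,H-\frac12-\gamma}\,(s-\theta)^{\,H-\frac12-\gamma}.
\]
Equivalently, this is the square root of the bound displayed in the proof of Lemma~\ref{doubleint}.

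\emph{Step 2 (reduction and iteration).} Inserting this bound into the left‑hand side and using that every $s_j>\theta$ on $\Delta_{\theta,t}^m$, the $s$‑independent factors $\big(\tfrac{\theta-\theta'}{\theta\theta'}\big)^{\gamma\varepsilon_j}\theta^{(H-\frac12-\gamma)\varepsilon_j}$ pull out, yielding the announced prefactor together with a constant $C_{H,T}^{\sum_j\varepsilon_j}\le C^m$. It remains to bound the Dirichlet‑type integral
\[
\int_{\Delta_{\theta,t}^m}\prod_{j=1}^{m}(s_j-\theta)^{(H-\frac12-\gamma)\varepsilon_j}\,|s_j-s_{j-1}|^{w_j}\,ds,\qquad s_0=\theta,
\]
which one evaluates by integrating the variables out in the order $s_m,s_{m-1},\dots,s_1$. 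At the $j$‑th step one applies $\int_\theta^{s_{j-1}}(s_j-\theta)^{a}(s_{j-1}-s_j)^{w_j}\,ds_j=(s_{j-1}-\theta)^{a+w_j+1}B(a+1,w_j+1)$, where $a$ is the exponent of $(s_j-\theta)$ accumulated from the previous $m-j$ integrations together with the $j$‑th kernel factor; a short induction identifies these accumulated exponents, the last integration over $s_1\in(\theta,t)$ produces the factor $(t-\theta)^{\sum_j w_j+(H-\frac12-\gamma)\sum_j\varepsilon_j+m}$, and collecting the Beta functions and writing $B=\Gamma\Gamma/\Gamma$ reproduces $\Pi_\gamma(m)$ exactly. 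When $\varepsilon\equiv0$ this is just the classical Dirichlet integral.

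\emph{Obstacle.} There is no genuine analytic difficulty beyond Step~1; the work is purely in the bookkeeping of Step~2. The crucial point is that each of the $m$ iterated integrals converges: the exponent $a+1$ appearing at the $j$‑th step is a partial sum of the quantities $w_l+(H-\frac12-\gamma)\varepsilon_l+1$, each of which is strictly positive by the standing hypothesis $w_l+(H-\frac12-\gamma)\varepsilon_l>-1$; thus that hypothesis is used precisely to keep every intermediate Gamma argument (hence $\Pi_\gamma(m)$ itself) well defined and positive. The only further care needed is the case distinction $\varepsilon_j\in\{0,1\}$: for indices with $\varepsilon_j=0$ there is no kernel factor at all, so the bound of Step~1 is invoked only at those $j$ with $\varepsilon_j=1$. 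This statement is Lemma~A.5 of \cite{BNP.16}, where the full computation is carried out.
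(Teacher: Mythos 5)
Your argument is correct: the pointwise bound $|K_H(s,\theta)-K_H(s,\theta')|\le C\big(\tfrac{\theta-\theta'}{\theta\theta'}\big)^{\gamma}\theta^{H-\frac12-\gamma}(s-\theta)^{H-\frac12-\gamma}$ is exactly the square root of the estimate established in the proof of Lemma \ref{doubleint}, and the iterated Beta-function (Dirichlet) computation with accumulated exponents $\sum_{l\le j}\big(w_l+(H-\tfrac12-\gamma)\varepsilon_l+1\big)$ reproduces $\Pi_\gamma(m)$ and the power of $(t-\theta)$, with the positivity hypothesis on $w_j+(H-\tfrac12-\gamma)\varepsilon_j$ guaranteeing convergence at every stage. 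The paper itself gives no proof, deferring to Lemma A.5 of \cite{BNP.16}, and your two-step argument is precisely the intended one.
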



\begin{thebibliography}{1}
\bibitem{alos.mazet.nualart.01} E. Al\`{o}s, O. Mazet and D. Nualart, \textit{Stochastic calculus with respect to Gaussian processes}. Annals of Probability \textbf{29}, (2001), 766--801.

\bibitem{ambrosio.04} L. Ambrosio, \textit{Transport equation and Cauchy problem for BV vector fields.} Inventiones mathematicae 158, (2004), 227--260.

\bibitem{AG} Anari, N., Gurvits, L., Gharan, S., Saberi, A.: Simply exponential approximation of the permanent of positive semdefinite matrices. 58th Annual IEEE Symposium on Foundations of Computer Science (2017). 

\bibitem{A58} T. W. Anderson., \emph{An Introduction to Multivariate
Statistical Analysis}. Wiley, New York (1958).

\bibitem{BDMP1} Ba\~{n}os, D., Duedahl, S.,
Meyer-Brandis, T., Proske, F.: \textit{Construction of Malliavin differentiable
strong solutions of SDE's under an integrability condition on the drift
without the Yamada-Watanabe principle.} arXiv:1503.09019 [math.PR] (2015).

\bibitem{BDMP2} Ba\~{n}os, D., Duedahl, S.,
Meyer-Brandis, T., Proske, F.: \textit{Computing deltas
without derivatives.} To appear in Finance and Stochastics (2016).







\bibitem{BNP.16} D.~Ba\~{n}os, T.~Nilssen and F. Proske, \emph{Strong existence and higher order Fr\'{e}chet differentiability of stochastic flows of fBm driven SDEs with singular drift}, (preprint 2016), arXiv:1511.02717 [math.PR].

\bibitem{BC.03} Bass, R., Chen, Z.-Q. \textit{Brownian motion
with singular drift.} Ann. of Prob., Vol. 31, No. 2, (2003), 791--817.
\bibitem{CG} Catellier, R., Gubinelli, M.: \textit{Averaging along irregular curves and regularisation of ODE's.} Stoch.\ Proc. and their Appl., Vol. 126, Issue 8, p. 2323-2366 (August 2016).





\bibitem{Ch et al 11} Chen, X., Li, W., Rosinski, J. and Shao Q.
(2011). Large deviations for local times and intersection local times
of fractional Brownian motions and Riemann-Liouville processes. \emph{Ann.
Probab.} \textbf{39} 729-778.




\bibitem{CD} Cuzick, J., DuPreez, P.: Joint continuity of Gaussian local times. Annals of Prob., Vol. 10, No. 3,
810-817 (1982).

\bibitem{DPFPR13} G. Da Prato, F. Flandoli, E. Priola, M. R\"{o}ckner, \textit{Strong uniqueness for stochastic evolution equations in Hilbert spaces with bounded and measurable drift.} Ann. of Prob., Vol. 41 (5), (2013), 3306--3344

\bibitem{DPMN92} G. Da Prato, P. Malliavin, D. Nualart, \textit{Compact families of Wiener functionals.} C. R. Acad. Sci. Paris, t. 315, S\'{e}rie I, (1992), 1287--1291.
\bibitem{Davie} Davie, A.M.: \textit{Uniqueness of
solutions of stochastic differential equations.} International Mathematics Research Notices (2007).
\bibitem{decreu.ustunel.98} L. Decreusefond, A. S. \"{U}st\"{u}nel, \textit{Stochastic analysis of the fractional Brownian motion}. Potential Analysis \textbf{10}, (1998), 177--214.


\bibitem{DOP08} G. Di Nunno, B. \O ksendal, F. Proske, \textit{Malliavin Calculus for L\'{e}vy Processes with Applications to Finance.} Springer (2008).


\bibitem{DPL89} R. J. DiPerna, P.L. Lions, \textit{Ordinary differential equations, transport theory and Sobolev spaces.} Inventiones mathematicae 98, (1989), 511--547.

\bibitem{Etore} \'{E}tor\'{e}, P.: \textit{On
random walk simulation of one-dimensional diffusion processes with
discontinuous coefficients.} Electron. J. Probab. 11, 9, p. 249-275 (2006).







\bibitem{FedFlan.13} Fedrizzi, E., Flandoli, F. \textit{Noise prevents singularities i linear transport equations}. J. of Funct. Analysis, Vol. 264, No. 6, (2013), 1329--1354.


\bibitem{FNP.13} Flandoli, F., Nilssen, T., Proske, F. \textit{Malliavin differentiability and strong solutions for a class of SDE in Hilbert spaces.}
Preprint series, University of Oslo, ISSN 0806-2439 (2013).

\bibitem{FRW}  Flandoli, F., Russo, F., Wolf,
J.: \textit{Some SDE's with distributional drift. I.\ General calculus} Osaka J.
Math. 40, 2, p. 493-542 (2003).


\bibitem{geman.horo.80} D. Geman and J. Horowitz, \emph{Occupation Densities}, The Annals of Probability, Vol. 8, No. 1 (1980), 1--67



\bibitem{GyK96} I. Gy\"{o}ngy, N. V. Krylov, \textit{Existence of strong solutions for It\^{o}'s stochastic equations via approximations.} Probab. Theory Relat. Fields, {\bf 105} (1996), 143--158.

\bibitem{GyM01} I. Gy\"{o}ngy, T. Martinez, \textit{On stochastic differential equations with locally unbounded drift.} Czechoslovak Mathematical Journal, {\bf 51} (4), (2001), 763--783.

\bibitem{HaaPros.14} Haadem, S., Proske, F. \textit{On the construction and Malliavin differentiability of solutions of L\'{e}vy noise driven SDE's with singular coefficients.} J. of Funct. Analysis, 266 (8), (2014), 5321--5359.

\bibitem{HarrisonShepp} Harrison, J.,\ Shepp, L.: \textit{On skew Brownian motion.} Ann. Probab. 9, 2, p. 309-313 (1981).

\bibitem{ItoMcKean} It\^{o}, K., McKean, H.  \textit{Diffusion Processes and Their
Sample Paths.} 2nd edition, Springer-Verlag (1974).






\bibitem{Kar98} I. Karatzas, S. E. Shreve, \textit{Brownian Motion and Stochastic Calculus.} Springer, 2n ed. (1998).


\bibitem{Kry80} N. V. Krylov, \textit{Some new results in the theory of controlled disffusion processes.} Math. USSR Sb. {\bf 37} (133), (1980).

  
\bibitem{KR05}  N.V. Krylov, M. R\"{o}ckner, \textit{ Strong solutions of stochastic equations with singular time dependent drift.} Prob. Theory Rel. Fields {\bf 131} (2), (2005), 154--196.

\bibitem{Kunita} Kunita, H.: Stochastic Flows and Stochastic Differential Equations. Cambridge University Press (1990).

\bibitem{LeGall} Le Gall, J.-F.: \textit{One-dimensional stochastic
differential equations involving the local times of the unknown process.} In:
Stochastic Analysis and Applications. Lecture Notes in Math., Vol. 1095,
Springer-Verlag, p. 51-82 (1985).


\bibitem{Lejay03} Lejay, A.:
Simulating a diffusion on a graph. Application to reservoir engineering.
Monte Carlo Methods Appl. 9, 3, p. 241-256 (2003).

\bibitem{Lejay06} Lejay, A.: On the constructions of the skew Brownian motion. Probability
Surveys,\ Vol. 3, p. 413-466 (2006).



\bibitem{LiWei} Li, W., Wei, A.: A Gaussian
inequality for expected absolute products. Journal of Theoretical Prob., Vol. 25, Issue 1, pp 92-99 (2012).
\bibitem{lizorkin.01} P.I. Lizorkin, \textit{Fractional integration and differentiation}, Encyclopedia of Mathematics, Springer, (2001)

\bibitem{LCL.04} T. J. Lyons, M. Caruana, T. L\'{e}vy, \textit{Differential Equations Driven by Rough Paths.} \'{E}cole d'\'{E}t\'{e} de Probabilit\'{e}s de Saint-Flour XXXIV--2004.

\bibitem{Mall78}  P. Malliavin, \textit{Stochastic calculus of variations
and hypoelliptic operators.} In: Proc. Inter. Symp. on Stoch. Diff.
Equations, Kyoto 1976, Wiley, (1978), 195--263.

\bibitem{Mall97}  P. Malliavin, \textit{Stochastic Analysis.} Springer (1997).
  
  
 
\bibitem{MMNPZ10} O. Menoukeu-Pamen, T. Meyer-Brandis, T. Nilssen, F. Proske, T. Zhang, \textit{A variational approach to the construction and Malliavin differentiability of strong solutions of SDE's.} Math. Ann. {\bf 357} (2), (2013), 761--799.
  
\bibitem{MBP06}  T. Meyer-Brandis, F. Proske, \textit{On the
existence and explicit representability of strong solutions of L\'{e}vy
noise driven SDE's.} Communications in Mathematical Sciences, {\bf 4} (1) (2006).  
  
\bibitem{MBP10}  T. Meyer-Brandis, F. Proske, \textit{Construction of strong solutions of SDE's via Malliavin calculus.} Journal of Funct. Anal. {\bf 258}, (2010), 3922--3953.  
  

\bibitem{MNP14}  S.E.A. Mohammed, T. Nilssen, F. Proske, \textit{Sobolev Differentiable Stochastic Flows for SDE's with Singular Coefficients: Applications to the Transport Equation.} Ann. Probab. {\bf 43} (3), (2015), 1535--1576.
 

\bibitem{nilssen.14} Nilssen, T. \textit{Quasi-linear stochastic partial differential equations with irregular coefficients- Malliavin regularity of the solutions.} Stochastic Partial Differential Equations: Analysis and Computations, {\bf 3} (3), (2015), 339--359.

\bibitem{nualart.ouknine.02} D. Nualart and Y. Ouknine, \textit{Regularization of differential equations by fractional noise}. Stochastic Processes and their Applications, {\bf 102} (1), (2002), 103--116.

\bibitem{nualart.ouknine.03} D. Nualart and Y. Ouknine, \textit{Stochastic Differential Equations with Additive Fractional Noise and Locally Unbounded Drift}, Stochastic Inequalities and Applications, Volume 56 of the series Progress in Probability, (2003), 353--365.


\bibitem{Nua10} D. Nualart, \textit{The Malliavin Calculus and Related Topics.} 2nd Ed. Springer (2010).

\bibitem{Pilipenko} Pilipenko, A.: \textit{An
Introduction to Stochastic Differential Equations with Reflection.} Lectures
in Pure and Applied Mathematics$\mid $\textrm{I}. Potsdam University Press
(2014).




\bibitem{pitt.78} L. D. Pitt, \textit{Local times for Gaussian vector fields}, Indiana Univ. Math. J. \textbf{27}, (1978), 309--330.

\bibitem{Portenko79} Portenko,. N.: \textit{Stochastic differential equations with generalized drift
vector.} Theory Probab. Appl. 24, 2, p. 338-353 (1979).





\bibitem{RY2004} D. Revuz, M. Yor, \textit{Continuous Martingales and Brownian Motion.} Third edition, Springer, 1999.

\bibitem{Rez.14} Rezakhanlou, F. \textit{Regular flows for diffusions with rough drifts.} arXiv:1405.5856v1[math.PR] (2014).










\bibitem{samko.et.al.93} S. G. Samko, A. A. Kilbas and O. L. Marichev, \textit{Fractional Integrals and Derivatives}. Theory and Applications, Gordon and Breach, (1993).


\bibitem{Ver79} A.Y. Veretennikov, \textit{On the strong solutions of stochastic differential equations.} Theory Probab. Appl. {\bf 24}, (1979), 354--366.

\bibitem{Walsh} Walsh, J.:\textit{A
diffusion with a discontinuous local time.} In Temps locaux. Ast\'{e}risque.
Soci\'{e}t\'{e} Math\'{e}matique de France, p. 37-45 (1978).




\bibitem{W48} J. G. Wendel, \textit{Note on the gamma function.} Amer. Math. Monthly 55, no. 9 (2011), 563--564.

\bibitem{xiao.11} Y. X. Xiao, \textit{Fractal and smoothness properties of space–time Gaussian models.} Frontiers of Mathematics in China 6, (2011), 1217--1248.

\bibitem{YW71} T. Yamada, S. Watanabe, \textit{On the uniqueness of solutions of
stochastic differential equations.} J. Math. Kyoto Univ. II, (1971), 155--167.

\bibitem{Yor} Yor, M.: \textit{Some aspects of
Brownian motion. Part II: Some recent martingale problems.} Lectures in
Mathematics ETH Z\"{u}rich. Birkh\"{a}user-Verlag (1997). 


\bibitem{Zhang} Zhang,\ M.: \textit{Calculation of diffusive shock acceleration of charged
particles by skew Brownian motion.} Astrophys. J., 541, 428-435 (2000).

\bibitem{Zvon74} A.K. Zvonkin, \textit{A transformation of the state space of a diffusion process that removes the drift.} Math.USSR (Sbornik) {\bf 22}, (1974), 129--149.  
  
  
\end{thebibliography}
\end{document}